\documentclass{article}
\usepackage{fullpage}
\usepackage{enumitem}
\usepackage{xcolor}
\usepackage{amsmath}

\usepackage{amsthm}
\usepackage{mathtools}
\usepackage[english]{babel}
\usepackage{amsxtra}
\usepackage{lipsum}
\usepackage{amsfonts}
\usepackage{graphicx}
\usepackage{subfig}
\usepackage{url}
\usepackage{epstopdf}
\usepackage{algorithmic}
\usepackage{caption}
\usepackage{float}
\newtheorem{theorem}{Theorem}
\newtheorem{lemma}{Lemma}
\newtheorem{remark}{Remark}

\DeclareMathOperator{\sign}{sign}
\DeclareMathOperator{\Diag}{Diag}

\newtheorem{myassump}{Assumption}
\def\Tr{\mathop{\rm \boldmath Tr}}
\def\argmin{\mathop{\rm \boldmath argmin}}

\begin{document}
	
	\begin{center}
		{\bf {\LARGE{Nonlinear consensus+innovations under correlated heavy-tailed noises: Mean square convergence rate and asymptotics }}}
		
		\vspace*{.2in}
		
				\large{
					\begin{tabular}{cc}
						 Manojlo Vukovic$^\circ$, & Dusan Jakovetic$^\ast$,
					\end{tabular}
					\begin{tabular}{ccc}
						Dragana Bajovic$^\dagger$, & Soummya Kar$^\rhd$
					\end{tabular}
				}
			
		\vspace*{.2in}
			
		\begin{tabular}{c}
        $^\circ$University of Novi Sad, Faculty of Technical Sciences, Department of\\ Fundamental Sciences (manojlo.vukovic@uns.ac.rs)\\
		$^\ast$ University of Novi Sad, Faculty of Sciences, Department of\\ Mathematics and Informatics (dusan.jakovetic@dmi.uns.ac.rs)
		\\ $^\dagger$ University of Novi Sad, Faculty of Technical Sciences, Department of \\Power, Electronic and
		Communication Engineering(dbajovic@uns.ac.rs) \\ 
		$^\rhd$ Department of Electrical and Computer Engineering, \\Carnegie Mellon University (soummyak@andrew.cmu.edu)
		\\
		\end{tabular}
				
				\vspace*{.2in}
				
				\today
				
				\vspace*{.2in}
		
	\end{center}
	
	\begin{abstract}
	We consider distributed recursive estimation of consensus+innovations type
	 in the presence of heavy-tailed sensing and communication noises. 
	 We allow that the sensing and communication noises are mutually correlated while independent identically distributed (i.i.d.) in time, and that they may both have infinite moments of order higher than one (hence having infinite variances). Such heavy-tailed, infinite-variance noises are highly relevant in practice and are shown to occur, e.g., in dense internet of things (IoT) deployments. We develop a consensus+innovations distributed estimator that employs a general nonlinearity in both consensus and innovations steps to combat the noise. We establish the estimator's almost sure convergence, asymptotic normality, and mean squared error (MSE) convergence. Moreover, we establish and explicitly quantify for the estimator a sublinear MSE convergence rate.  
	  We then quantify through analytical examples the effects of the 
	 nonlinearity choices  and the noises correlation on the system performance. Finally, 
	 numerical examples corroborate our findings and verify that the proposed method 
	 works in the simultaneous heavy-tail communication-sensing noise setting, while existing methods fail under the same noise conditions.
	\end{abstract}
	
	\section{Introduction}
	We consider a distributed estimation problem where a network of 
	agents cooperates to estimate an unknown static vector parameter  $\boldsymbol{\theta}^\ast \in {\mathbb R}^M$. Specifically, we are interested in \emph{consensus+innovations} distributed estimation, e.g.,~\cite{KMR,SoummyaAsymptEfficient,SoummyaAdaptive}. With consensus+innovations,  each agent iteratively updates its unknown parameter's estimate by 
	1) exchanging its estimate with immediate neighbors in the network; and 2) assimilating a newly acquired observation (measurement). 
	
	Consensus+innovations distributed estimators have been extensively studied, e.g., \cite{KMR,SoummyaAsymptEfficient,SoummyaAdaptive}; see also \cite{A1,A2,A3,A4,A5,E,G1} for related diffusion-type and other methods.  
	Typically, such distributed estimators exhibit strong convergence guarantees  under various imperfection models (noises) in 1) sensing (observations) and/or 2) inter-agent communications. For example, reference~\cite{KMR} establishes almost sure (a.s.)  convergence and asymptotic normality of the estimators developed therein.  The authors of~\cite{KMR} allow for an observation noise with finite variance and a network model that accounts for random link failures and dithered quantization (effectively an additive noise with finite variance).  Reference~\cite{SoummyaAsymptEfficient} considers consensus+innovations distributed estimation in the presence of random link failures without quantization or additive noise, and it develops estimators that are asymptotically efficient, i.e., that achieve the minimal possible asymptotic variance. The authors of~\cite{SoummyaAdaptive} propose adaptive asymptotically efficient estimators, wherein the innovation gains are adaptively learned during the algorithm progress. 
	Consensus+innovations distributed detection and related distributed detection methods have also been considered, e.g.,~\cite{SayedDetection,D1,D2,D3}. The above distributed estimation and distributed detection-related works typically assume that the noises have finite moments of a certain order greater than two, and hence they have finite variance.  
	
	It is highly relevant to investigate distributed estimators in the presence of heavy-tailed \emph{communication} and \emph{sensing} noises,  as they arise in many application scenarios. For example, edge devices in Internet of Things (IoT) systems or sensor networks can be subject to noise distributions that may not have finite moments of order higher than one, e.g.,~\cite{IoTHeavyTail,IoTHeavyTail2,18,19,20,DasNonlinConsensus}, like, e.g., symmetric $\alpha$-stable noise distributions. 
    {This effect may occur due to interference, e.g., when wireless sensor network is relatively densely deployed. In this case, the signals of neighboring nodes interfere with each other and corrupt the signal to be received.
    References~\cite{Haenggi,Win} analyze the probability distribution of the interference and demonstrate that it has heavy-tails. More precisely,  \cite{Haenggi,Win} show that the interference power has an alpha-stable distribution in a network with infinite radius and no guard zone when the interferers are placed according to a Poisson point process, where alpha depends on the path loss coefficient between the interferers and the receiver (see~\cite{Haenggi,Win} for details). Empirical evidence for the emergence of heavy-tail interference noise in certain IoT systems has been provided in \cite{IoTHeavyTail}.}

    Moreover, observation and communication noises may be mutually correlated due to the common interference processes in the environment that the sensing and communication devices are exposed to. 
	
	Several recent works~\cite{R2,R3,R5,I,G,F,A,R1} consider distributed estimation methods in the presence of \emph{impulsive observations noise},\footnote{As explained in, e.g., \cite{F}, an
impulsive noise may be described as one whose realizations contain sparse, random samples of amplitude
much higher than nominally accounted for. Impulsive noise may have a finite or infinite variance. Existing works on distributed estimation in impulsive noises assume a \emph{finite noise variance}.} but still assuming a \emph{finite noise variance} and \emph{no communication noise}. 
	For example, reference~\cite{R2} introduces a method based on Wilcoxon-norm;~\cite{R3} utilizes a Huber-loss function; and~\cite{R5} adopts a mean error minimization approach. 
	Robust distributed estimation methods based on adaptive subgradient projections are considered in~\cite{I,G}. 
	To cope with the impulsive observation noise, several references employ a certain \emph{nonlinearity} in the innovation step. 
	Reference~\cite{F} develops a method that adaptively learns an optimized nonlinearity at the innovation step for each agent in the network.  Reference~\cite{A} employs a saturation nonlinearity in the innovation step to cope with measurement attacks. Further results on distributed estimation under impulsive observations noise can be found in a recent survey~\cite{R1}. 
	Very recently, we have developed a consensus+innovations distributed estimator~\cite{Ourwork} that provably works under 
	a heavy-tailed communications noise and a light-tailed 
	observations noise. Specifically, under the assumed setting, \cite{Ourwork} 
	establishes almost sure convergence and asymptotic normality of the method therein.
	 However, \cite{Ourwork} is not concerned with mean squared error (MSE) rate analysis 
	 of the method. While asymptotic normality is a useful result that provides 
	 the algorithm's rate of convergence (in the weak convergence sense)
 \emph{asymptotically}, it does not capture the (MSE) algorithm behavior in non-asymptotic regimes.	
	
	In summary, we identify for the current literature the following major gaps with respect to design and analysis of distributed estimation methods under heavy-tailed noises. 1) All existing works assume a finite observations noise variance. That is, even when impulsive observation noise is assumed, existing works still require the variance of the noise to be finite. This assumption can be restrictive and is violated for several commonly used heavy-tail noise models like $\alpha$-stable distributions~\cite{20}. 2) No existing work simultaneously handles 
	heavy-tailed (infinite-variance) sensing and heavy-tailed (infinite-variance) observation noises. 3) 
	MSE convergence rate analysis has not been developed for distributed estimation in the presence of either infinite-variance sensing and/or infinite-variance communication noises. 4) Existing works on distributed estimation in 
	the presence of infinite-variance (either sensing and/or communication noises) assume mutually independent sensing and communication noises.
	
	\textbf{Contributions}. In this paper, we close the gaps identified above 
	by developing a nonlinear consensus+innovations distributed estimator that provably works under the simultaneous presence of correlated heavy-tailed (infinite variance) observation and communication noises. We allow for a very general model of the sensing and communication noises, only assuming that 
	they exhibit symmetric zero-mean distributions with finite first moments. Hence, the variances of both sensing and 
	communication noises may be infinite. Moreover, we allow that, for a fixed time instant $t$, the additive sensing and communication noises may be mutually dependent, while they are both independent identically distributed (i.i.d.) in time. The proposed estimator employs a generic nonlinearity both at the innovations and the consensus terms.  
	The encompassed nonlinearities are very general and include a broad class of (possibly discontinuous) odd functions, such as the component-wise sign and clipping functions.   
	We establish for the proposed estimator almost sure convergence, asymptotic normality, and we explicitly evaluate the corresponding asymptotic variance.  Furthermore, we establish 
	for the proposed method, under a carefully designed step size sequence, a MSE convergence rate $O(1/t^\kappa)$, and we quantify the rate $\kappa \in (0,1)$ in terms of the system parameters. 
	In addition, we quantify through analytical examples the effects of correlation between sensing and observation noises, and we demonstrate how the derived asymptotic covariance results may be used as a guideline to optimize the employed nonlinearities for a problem at hand. 
	 Finally, we compare the proposed method with existing works in~\cite{F} and~\cite{Ourwork}, both through analytical examples and by simulation. Most notably, we show that the existing methods fail to converge under the simultaneous presence of heavy-tailed (infinite-variance) observation and communication noises, while the proposed method provably works in the heavy-tailed setting. 
	
	\textbf{Paper organization}.
    Section~\ref{section-model-algorithm} provides a description of the distributed estimation model that is considered and also gives all basic assumptions. In Section~\ref{section-proposed-algorithm}, we present the proposed nonlinear consensus+innovations estimator. Section \ref{section-theoresults} establishes almost sure convergence, asymptotic normality and the MSE rate of the proposed distributed estimator. Section~\ref{section:examples} presents analytical and numerical examples. The conclusion is given in Section~\ref{section-conlusion}. Some auxiliary supporting arguments are provided in Appendix.
 
	\textbf{Notation}. We denote by $\mathbb R$ the set of real numbers and by ${\mathbb R}^m$ the $m$-dimensional
	Euclidean real coordinate space. We use normal lower-case letters for scalars,
	lower case boldface letters for vectors, and upper case boldface letters for
	matrices. Further,  to represent a vector $\mathbf{a}\in\mathbb{R}^m$ through its component, we write $\mathbf{a}=[\mathbf{a}_1, \mathbf{a}_2, ...,\mathbf{a}_m]^\top$ and we denote by: $\mathbf{a}_i$ or $[\mathbf{a}_i]$, as appropriate, the $i$-th element of vector $\mathbf{a}$; $\mathbf{A}_{ij}$ or $[\mathbf{A}_{ij}]$, as appropriate, the entry in the $i$-th row and $j$-th column of
	a matrix $\mathbf{A}$;
	$\mathbf{A}^\top$ the transpose of a matrix $\mathbf{A}$; $\otimes$ the Kronecker product of matrices. Further, we use either  
	$\mathbf{a}^\top \mathbf{b}$ or 
	$\langle \mathbf{a},\,\mathbf{b}\rangle$ 
	for the inner products of vectors 
	$\mathbf{a}$ and $\mathbf{b}$. Next, we let  
	$\mathbf{I}$, $\mathbf{0}$, and $\mathbf{1}$ be, respectively, the identity matrix, the zero vector, and the column vector with unit entries; $\Diag(\mathbf{a})$ the diagonal matrix 
	whose diagonal entries are the elements of vector~$\mathbf{a}$; 
	$\mathbf{J}$ the $N \times N$ matrix $\mathbf{J}:=(1/N)\mathbf{1}\mathbf{1}^\top$.
	When appropriate, we indicate the matrix or vector dimension through a subscript.
	Next, $\mathbf{A}\succ  0 \,(\mathbf{A} \succeq  0 )$ means that
	the symmetric matrix $A$ is positive definite (respectively, positive semi-definite).
	We further denote by:
	$\|\cdot\|=\|\cdot\|_2$ the Euclidean (respectively, spectral) norm of its vector (respectively, matrix) argument; $\lambda_i(\cdot)$ the $i$-th smallest eigenvalue; $g^\prime(v)$ the derivative evaluated at $v$ of a function $g:\mathbb{R}\to\mathbb{R}$; $\nabla h(\mathbf{w})$ and $\nabla^2 h(\mathbf{w})$ the gradient and Hessian, respectively, evaluated at $w$ of a function $h: {\mathbb R}^m \rightarrow {\mathbb R}$, $m > 1$; $\mathbb P(\mathcal A)$ and $\mathbb E[u]$ the probability of
	an event $\mathcal A$ and expectation of a random variable $u$, respectively; and by $\sign(a)$ the sign function, i.e., $\sign(a)=1$, for $a>0$, $\sign(a)=-1$, for $a<0$, and $\sign(0)=0$.
	Finally, for two positive sequences $\eta_n$ and $\chi_n$, we have: $\eta_n = O(\chi_n)$ if
	$\limsup_{n \rightarrow \infty}\frac{\eta_n}{\chi_n}<\infty$.

	\section{Problem model and basic assumptions}
	\label{section-model-algorithm}
	
	We consider a network of $N$ agents (sensors), through which the parameter of interest $\boldsymbol{\theta}^{\ast}\in\mathbb{R}^{M}$ is to be estimated. At each time $t=0,1,...,$ each agent $i=1,2,...,N$ observes parameter $\boldsymbol{\theta}^{\ast}$ following the linear regression model:
	\begin{align}
		\label{eq:obs_model}
		z_{i}^{t} = \mathbf{h}_{i}^{\top}\boldsymbol{\theta}^{\ast}+n_{i}^{t}.
	\end{align}
	Here, $z_{i}^{t}\in\mathbb{R}$ is the observation, $\mathbf{h}_{i}\in\mathbb{R}^{M}$ is the deterministic, non-zero regression vector known only by agent $i$ and $n_{i}^{t}\in\mathbb{R}$ is the observation noise. The underlying topology is modeled via a graph $G=(V,E),$ where $V=\{1,...,N\}$ is the set of agents and $E$ is the set of links, i.e., $\{i,j\}\in E$ if there exists a link between agents $i$ and $j$. We also define the set of all arcs $E_d$ in the following way: if $\{i,j\}\in E$ then $(i,j)\in E_d$ and $(j,i)\in E_d$.
	We denote by $\Omega_i=\{j\in V: \{i,j\}\in E\}$ set of neighbors of agent~$i$ (excluding $i$) and by $\mathbf{D}=\Diag(\{d_i\})$ the degree matrix, where $d_i=|\Omega_i|$ is the number of neighbors of agent~$i$. The graph Laplacian matrix $\mathbf{L}$ is defined by $\mathbf{L}=\mathbf{D}-\mathbf{A}$, where $\mathbf{A}$ is the adjacency matrix, which is a zero-one symmetric matrix with zero diagonal, such that, for $i\neq j$, $\mathbf{A}_{ij}=1$ if and only if $\{i,j\}\in E.$ 
	Let us denote by $(\Omega,\mathcal{F},\mathbb{P})$ the underlying probability space.
	
	\noindent We make the following assumptions.
	
	\begin{myassump}\label{as:newtworkmodelandobservability}\textbf{Network model and Observability:} 
		\begin{enumerate}
			\item Graph $G=(V,E)$ is undirected, simple (no self or multiple links) and static;
			\item The matrix $\sum_{i=1}^{N}\mathbf{h}_i\mathbf{h}_i^{\top}$ is invertible;
		\end{enumerate}
	\end{myassump}
	
	\noindent The condition 2 in Assumption~\ref{as:newtworkmodelandobservability} ensures that \eqref{eq:obs_model} is observable, i.e., a centralized estimator (e.g., least squares) that collects all $z_i^t, i=1,2,...,N,$ for all $t$, and has knowledge of all vectors $\mathbf{h}_i, i=1,2,...,N,$ is consistent.
	
	\begin{myassump}\label{as:observationnoise}\textbf{Observation noise:}
		\begin{enumerate}	
			\item For each agent 
			$i=1,...,N$, the observation noise sequence $\{{n}_{i}^{t}\}$ in \eqref{eq:obs_model},
			is independent identically distributed (i.i.d.);
            \item {At each agent $i=1,...,N$~at each time~$t=0,1,...,$ noise $n_i^t$ has the same probability density function~$p_\mathrm{o}$.}
			\item Random variables ${n}_{i}^{t}$ and ${n}_{j}^{s}$ 
			are mutually independent whenever the tuple  $(i,t)$ 
			is different from $(j,s)$;
            \item {The pdf $p_\mathrm{o}$ is symmetric, i.e. $p_\mathrm{o}(u)=p_\mathrm{o}(-u),$ for every $u\in\mathbb{R}$, and $p_\mathrm{o}(u)>0$ for $|u|\leq c_\mathrm{o}$, for some constant $c_\mathrm{o}>0$;}
            \item {There holds that with $\int |u|p_\mathrm{o}(u)du <\infty$.}
		\end{enumerate}
	\end{myassump}
	
	\noindent If there is an arc between agents $i$ and $j$, i.e., $(i,j)\in E_d$, we denote by $\boldsymbol{\xi}_{ij}^t$ communication noise that is injected when agent $j$ communicates to agent $i$ at time instant $t$ (see ahead algorithm~\eqref{eq:alg2}).
	
	\begin{myassump}\label{as:communicationnoise}\textbf{Communication noise:}
		\begin{enumerate}
			\item Additive communication noise $\{\boldsymbol{\xi}^t_{ij}\}$, $\boldsymbol{\xi}^t_{ij} \in \mathbb{R}^M$ is i.i.d. in time $t$, and independent across different arcs~$(i,j)\in E_d.$
            \item {Each random variable $[\boldsymbol{\xi}^t_{ij}]_{\ell}$, for 
			each $t=0,1...$, for each arc $(i,j)$, 
			for each entry $\ell=1,...,M$, 
			has the same probability density function~$p_\mathrm{c}$.}
            \item {The pdf $p_\mathrm{c}$ is symmetric, i.e. $p_\mathrm{o}(u)=p_\mathrm{c}(-u),$ for every $u\in\mathbb{R}$ and $p_\mathrm{c}(u)>0$ for $|u|\leq c_\mathrm{c}$, for some constant $c_\mathrm{c}>0$;}
            \item {There holds that $\int |u|p_\mathrm{c}(u)du <\infty$.}
		\end{enumerate}
	\end{myassump}
	\begin{remark}
	{Notice here that from the symmetry of the probability density functions $p_{\mathrm{o}}$ and $p_{\mathrm{c}}$}, it follows that both of the distributions are zero mean. Moreover, notice that we do not assume that observation and communication noises are mutually independent for a fixed $t$. However, they are both i.i.d. in time.
    \end{remark}
	
    \begin{remark}
    Condition 2 in Assumptions~\ref{as:observationnoise} and~\ref{as:communicationnoise} can be relaxed in the sense that it can be assumed that $\mathbf{n}^t$ has joint probability density function $p_\mathrm{o}$ and $\boldsymbol{\xi}_{ij}^t$ has the joint probability density function $p_{\mathrm{c},ij}.$ (see Appendix C).
{The reason why there is condition 4 in the Assumption~\ref{as:observationnoise} and condition 3 in the Assumption~\ref{as:communicationnoise}  will become clear later.}
\end{remark}
	
	\noindent For future reference, a compact vector form of~\eqref{eq:obs_model} is:
	\begin{align}
		\label{eq:obs_vec}
		\mathbf{z}^{t} = \mathbf{H}\left(\mathbf{1}_{N}\otimes \boldsymbol{\theta}^{\ast}\right)+\mathbf{n}^{t},
	\end{align}
	where, $\mathbf{z}^t=[z_1^t,z_2^t,...,z_N^t]^\top\in\mathbb{R}^N$ is the observation vector, $\mathbf{H}\in\mathbb{R}^{N\times(MN)}$ is the regression matrix whose $i$-th row vector equals $[\mathbf{0},...,\mathbf{0},\mathbf{h}_i^\top,\mathbf{0},..,\mathbf{0}]\in\mathbb{R}^{MN}$, where the $i$-th block of size $M$ equals $\mathbf{h}_i^\top$, and the other $M$-size blocks are the zero vectors;  and $\mathbf{n}^t=[n_1^t,{n}_2^t,...,{n}_N^t]^\top\in\mathbb{R}^N$ is the noise vector at time $t$.

	\section{Proposed algorithm}\label{section-proposed-algorithm}
	
	In order to estimate the unknown parameter $\boldsymbol{\theta}^\ast \in {\mathbb R}^M$, in the presence of heavy-tailed observation noise and heavy-tailed communication noise, each agent uses a nonlinear consensus+innovations strategy. Therein, the impact of the two heavy-tailed noises is mitigated by nonlinearities that have been added to both consensus and innovation steps.
	
	\noindent In more detail, each agent $i$ at each time $t=0,1,...,$  generates a sequence of estimates $\{\mathbf{x}_{i}^{t}\}_{t\geq0}$ of unknown parameter $\boldsymbol{\theta}^\ast$ by the following algorithm:
	
	\begin{align}\label{eq:alg2}
		\mathbf{x}_{i}^{t+1}=\mathbf{x}_{i}^{t}-\alpha_{t}\left(\frac{b}{a}\sum_{j\in\Omega_{i}}
		\boldsymbol{\Psi}_\mathrm{c}\left( \mathbf{x}_{i}^{t}-\mathbf{x}_{j}^{t} 
		+\boldsymbol{\xi}_{ij}^t\right)-\mathbf{h}_{i}{\Psi}_\mathrm{o}\left(z_{i}^{t}-\mathbf{h}_{i}^{\top}\mathbf{x}_{i}^{t}\right)\right).
	\end{align} %
	Here, $\alpha_t$ is a step-size, 
	and $a,b>0$ are constants. We consider a family of decaying step-size choices $\alpha_t= a/(t+1)^\delta,$ $\delta\in (0.5,1].$ As shown later, the step-size (values of $a$ and $\delta$) should be designed appropriately in order for good properties (e.g., a.s. convergence, MSE rate guarantees) of the algorithm to hold. Functions ${\Psi}_\mathrm{o}:\mathbb{R}\to\mathbb{R}$ and $\boldsymbol{\Psi}_\mathrm{c}:  \mathbb{R}^M\to\mathbb{R}^M$ are non-linear functions and function $\boldsymbol{\Psi}_\mathrm{c}$ operates component-wise by abusing notation, i.e., for
	 $\mathbf{y}\in\mathbb{R}^M$, we set that  $\boldsymbol{\Psi}_\mathrm{c}(\mathbf{y})=[{\Psi}_\mathrm{c}(\mathbf{y}_1),{\Psi}_\mathrm{c}(\mathbf{y}_2),...,{\Psi}_\mathrm{c}(\mathbf{y}_M)].$ Also, functions ${\Psi}_\mathrm{c}$ and ${\Psi}_\mathrm{o}$ satisfy Assumption~\ref{as:nonlinearity}. We compare the proposed method \eqref{eq:alg2} with the $\mathcal{LU}$ scheme in~\cite{KMR} and the scheme in~\cite{Ourwork}. Compared with these schemes,~\eqref{eq:alg2} introduces a nonlinearity in the innovation step as well. $\mathcal{LU}$ is obtained from~\eqref{eq:alg2} by setting both of the nonlinearities $\Psi_{\mathrm{o}}$ and $\Psi_{\mathrm{o}}$ to identity functions and $\delta=1$, the method in~\cite{Ourwork} is recovered from~\eqref{eq:alg2} by setting $\Psi_{\mathrm{o}}$ to the identity function and $\delta=1$.
	 	
	\begin{myassump}\label{as:nonlinearity}\textbf{Nonlinearity $\Psi$:}\\
		The non-linear function $\Psi:\mathbb{R}\to\mathbb{R}$ satisfies the following properties:
		\begin{enumerate}
			\item Function~$\Psi$ is odd, i.e., $\Psi(a)=-\Psi(-a),$ for any $a\in\mathbb{R}$;
			\item $\Psi(a) > 0,$ for any $a > 0$.
			\item Function $\Psi$ is a monotonically nondecreasing function;
			\item $\Psi$ is continuous, except possibly on a point set with Lebesque measure of zero. Moreover, $\Psi$ is piecewise differentiable;
            \item {$|\Psi(a)|\leq c_1$, for some constant $c_1>0.$}
            \item { $\Psi$ is either discontinuous at zero, or $\Psi(u)$ is strictly increasing for $u\in(-c_2,c_2)$, for some $c_2 > 0.$}
		\end{enumerate}
	\end{myassump}
	
	\noindent As it will become clear ahead, the role of $\Psi_\mathrm{c}$ and $\Psi_\mathrm{o}$ is to lower the impact of the heavy-tailed noise that occurs in the regression model and in the communication between agents. As it is presented in~\cite{Ourwork}, there are many nonlinear functions which satisfy Assumption~\ref{as:nonlinearity}. Now, we add more assumptions on the observation and communication noises through the following assumption.

	\noindent At each time $t=0,1,...$, a compact vector form of algorithm~\eqref{eq:alg2} is
	\begin{align}\label{eq:algcomp}
		\mathbf{x}^{t+1}=\mathbf{x}^{t}-\alpha_t\left( \frac{b}{a}\mathbf{L}_{\boldsymbol{\Psi}_\mathrm{c}}(\mathbf{x})- \mathbf{H}^{\top}\boldsymbol{\Psi}_\mathrm{o}\left(\mathbf{z}^{t}-\mathbf{H}\mathbf{x}^{t}\right)\right).
	\end{align}
	Here, $\mathbf{x}^t=[\mathbf{x}_1^t,\mathbf{x}_2^t,...,\mathbf{x}_N^t]^\top\in\mathbb{R}^{MN},$ map $\mathbf{L}_{\boldsymbol{\Psi}_\mathrm{c}}(\mathbf{x}):\mathbb{R}^{MN}\to\mathbb{R}^{MN}$ is defined by
	\begin{align*}
		\mathbf{L}_{\boldsymbol{\Psi}_\mathrm{c}}(\mathbf{x})=\begin{bmatrix}
			\vdots\\
			\sum\limits_{j\in\Omega_{i}}\boldsymbol{\Psi}_\mathrm{c}(\mathbf{x}_i-\mathbf{x}_j+\boldsymbol{\xi}_{ij})\\
			\vdots
		\end{bmatrix},
	\end{align*}
	where, the blocks $\sum\limits_{j\in\Omega_{i}}\boldsymbol{\Psi}_\mathrm{c}(\mathbf{x}_i-\mathbf{x}_j+\boldsymbol{\xi}_{ij})\in\mathbb{R}^M$ are stacked one on top of another for $i=1,...,N.$ 
	
	\section{Theoretical results}\label{section-theoresults}
	
	In subsection~\ref{subsection:rewritingalg} we express algorithm~\eqref{eq:algcomp} in more general way, that will be used in the following subsections. Subsection~\ref{subsection:as} presents the statement and the proof of almost sure convergence of algorithm~\eqref{eq:alg2}. In subsection~\ref{subsection:an} we state and prove asymptotic normality and calculate the corresponding asymptotic variance. Subection~\ref{subsection-MSE} presents and proves results on MSE rates.
	
	\subsection{Setting up analysis}
	\label{subsection:rewritingalg}
	
	In this subsection we rewrite algorithm~\eqref{eq:alg2} in the form suitable for stating the main results. To do that, firstly we define function $\varphi:\mathbb{R}\to\mathbb{R}$ by
    {
	\begin{align}
		\label{eq:phi-def}
		\varphi(a)=\int \Psi(a+w)p(w) dw,
	\end{align}}
    {where $\Psi:\mathbb{R}\to\mathbb{R}$ is a nonlinear function that satisfies Assumption~\ref{as:nonlinearity}, and $p$ is a probability density function that satisfies Assumptions~\ref{as:observationnoise} or~\ref{as:communicationnoise}.}
    \begin{remark}
        {The mapping $\varphi$ has all key properties of function $\Psi$ (see Lemma 6 in Appendix B, see also~\cite{PolyakNL}). Moreover, it has a strictly positive derivative at zero, i.e., $\varphi^\prime(0)>0$, which is necessary to prove our results. The facts that the nonlinearity $\Psi$ is discontinuous at zero or that it has a positive derivative at zero, together with
        condition 4  from Assumptions~\ref{as:observationnoise} and condition 3 from~\ref{as:communicationnoise}, are crucial to ensure that $\varphi$ has a positive derivative at zero (see Appendix B, see also~\cite{Ourwork,PolyakNL}). Notice that the requirement that the pdf $p$ is positive in the vicinity of the zero is not restrictive, since it holds true for a broad classes of non-zero noise pdfs.}
    \end{remark}
    Next, we define functions $\boldsymbol{\varphi}_\mathrm{o}:\mathbb{R}^{N}\to\mathbb{R}^{N}$, $\boldsymbol{\varphi}_\mathrm{c}:\mathbb{R}^{M}\to\mathbb{R}^{M}$ as $\boldsymbol{\varphi}_\mathrm{o}(\mathbf{y}_1,\mathbf{y}_2,...,\mathbf{y}_{N})=[\varphi_\mathrm{o}(\mathbf{y}_1),\varphi_\mathrm{o}(\mathbf{y}_2),...,\varphi_\mathrm{o}(\mathbf{y}_{N})]$, $\boldsymbol{\varphi}_\mathrm{c}(\hat{\mathbf{y}}_1,\hat{\mathbf{y}}_2,...,\hat{\mathbf{y}}_{M})=[\varphi_\mathrm{c}(\hat{\mathbf{y}}_1),\varphi_\mathrm{c}(\hat{\mathbf{y}}_2),...,\varphi_\mathrm{c}(\hat{\mathbf{y}}_{M})]$, where $\mathbf{y}\in\mathbb{R}^{N}$, $\hat{\mathbf{y}}\in\mathbb{R}^{M}$ and functions $\varphi_\mathrm{o}$ and $\varphi_\mathrm{c}$ are transformations defined by~\eqref{eq:phi-def} that correspond to $\Psi_\mathrm{o}$ and $\Psi_\mathrm{c}$, respectively. For the a.s. convergence and asymptotic normality results, we will follow the stochastic approximation framework from~\cite{Nevelson,KMR} (see Theorem 4 in Appendix A). That is, we represent algorithm~\eqref{eq:alg2} in the form suitable for stochastic approximation analysis.
	  We start by substituting regression model~\eqref{eq:obs_vec} into algorithm~\eqref{eq:algcomp}, we get 
	\begin{align}\label{eq:algonoise}
		\mathbf{x}^{t+1}=\mathbf{x}^{t}-\alpha_t\left( \frac{b}{a}\mathbf{L}_{\boldsymbol{\Psi}_\mathrm{c}}(\mathbf{x})-\mathbf{H}^{\top}\boldsymbol{\Psi}_\mathrm{o}\left(     
		\mathbf{H}\left(\mathbf{1}_{N}\otimes \boldsymbol{\theta}^{\ast}\right)+\mathbf{n}^{t}
		-\mathbf{H}\mathbf{x}^{t}\right)\right).
	\end{align}
	Define $\boldsymbol{\zeta}^t\in\mathbb{R}^{N}$ and  $\boldsymbol{\eta}^t\in\mathbb{R}^{MN}$ by 
	\begin{align}\label{eq:zeta}
		\boldsymbol{\zeta}^t=\boldsymbol{\Psi}_\mathrm{o}(\mathbf{H}\left(\mathbf{1}_{N}\otimes \boldsymbol{\theta}^{\ast}\right)+\mathbf{n}^{t}
		-\mathbf{H}\mathbf{x}^{t})-\boldsymbol{\varphi}_\mathrm{o}\left(\mathbf{H}\left(\left(\mathbf{1}_{N}\otimes \boldsymbol{\theta}^{\ast}\right)
		-\mathbf{x}^{t}\right)\right), \quad
		\boldsymbol{\eta}^t=\begin{bmatrix}
			\vdots\\
			\sum\limits_{j\in\Omega_{i}}\boldsymbol{\eta}_{ij}^t\\
			\vdots
		\end{bmatrix},
	\end{align}
	where $\boldsymbol{\eta}_{ij}^t=\boldsymbol{\Psi}_\mathrm{c}(\mathbf{x}_i^t-\mathbf{x}_j^t+\boldsymbol{\xi}_{ij}^t)-\boldsymbol{\varphi}_\mathrm{c}(\mathbf{x}_i^t-\mathbf{x}_j^t).$ Now, {since $\varphi$ is defined by~\eqref{eq:phi-def}}, it can be shown that  $\mathbb{E}[\boldsymbol{\zeta}^t]=\mathbb{E}[\boldsymbol{\eta}^t]=0$, where the expectation is taken with respect to $\mathcal{F}$ (see Appendix B).
	Furthermore, we define function $\mathbf{L}_{\boldsymbol{\varphi}_\mathrm{c}}:\mathbb{R}^{MN}\to\mathbb{R}^{MN}$ as $\mathbf{L}_{\boldsymbol{\varphi}_\mathrm{c}}(\cdot)=\mathbf{L}_{\boldsymbol{\Psi}_\mathrm{c}}(\cdot)-\boldsymbol{\eta}^t,$ i.e., its $i$-th block of size $M$ is $\sum\limits_{j\in\Omega_{i}}\boldsymbol{\varphi}_\mathrm{c}(\mathbf{x}_i-\mathbf{x}_j).$ for $i=1,2,...,N.$
	Finally, substituting~\eqref{eq:zeta} into~\eqref{eq:algonoise}, we rewrite algorithm~\eqref{eq:algcomp} by
	\begin{align}\label{eq:algfin}
		\mathbf{x}^{t+1}=\mathbf{x}^{t}-\alpha_t\left( \frac{b}{a}\mathbf{L}_{\boldsymbol{\varphi}_\mathrm{c}}(\mathbf{x}^t)- \mathbf{H}^{\top}\boldsymbol{\varphi}_\mathrm{o}\left(     
		\mathbf{H}\left(\left(\mathbf{1}_{N}\otimes \boldsymbol{\theta}^{\ast}\right)
		-\mathbf{x}^{t}\right)\right) - \mathbf{H}^{\top}\boldsymbol{\zeta}^t+\frac{b}{a}\boldsymbol{\eta}^t \right).
	\end{align}
 Now, we are ready to establish following results.
 
	\subsection{Almost sure convergence}\label{subsection:as}
	We have the following Theorem.
	\begin{theorem}[Almost sure convergence]
		\label{theorem-almost-surely}
		Let Assumptions \ref{as:newtworkmodelandobservability}-\ref{as:nonlinearity} hold and $\alpha_t= a/(t+1)^\delta,$ $\delta\in (0.5,1]$.  
		Then, for each agent $i=1,...,N$, 
		the sequence of iterates $\{\mathbf{x}_i^t\}$ 
		generated by algorithm~\eqref{eq:alg2} 
		converges almost surely to the true 
		vector parameter~$\boldsymbol{\theta}^{\ast}$.
	\end{theorem}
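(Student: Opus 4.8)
My plan is to cast the recursion \eqref{eq:algfin} as a stochastic approximation scheme and verify the hypotheses of the convergence result quoted in Appendix~A (Theorem~4, cf.~\cite{Nevelson,KMR}). Concretely, I would write \eqref{eq:algfin} as $\mathbf{x}^{t+1} = \mathbf{x}^t + \alpha_t\big(\mathbf{R}(\mathbf{x}^t) + \boldsymbol{\Gamma}^t\big)$ with deterministic drift $\mathbf{R}(\mathbf{x}) = -\tfrac{b}{a}\mathbf{L}_{\boldsymbol{\varphi}_\mathrm{c}}(\mathbf{x}) + \mathbf{H}^\top\boldsymbol{\varphi}_\mathrm{o}\big(\mathbf{H}((\mathbf{1}_N\otimes\boldsymbol{\theta}^\ast) - \mathbf{x})\big)$ and noise $\boldsymbol{\Gamma}^t = \mathbf{H}^\top\boldsymbol{\zeta}^t - \tfrac{b}{a}\boldsymbol{\eta}^t$. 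With respect to the natural filtration $\mathcal{F}_t = \sigma\big(\mathbf{x}^0, \{\mathbf{n}^s\}_{s<t}, \{\boldsymbol{\xi}_{ij}^s\}_{s<t}\big)$, $\boldsymbol{\Gamma}^t$ is a martingale difference: by construction \eqref{eq:phi-def}, $\boldsymbol{\varphi}_\mathrm{o}$ and $\boldsymbol{\varphi}_\mathrm{c}$ are precisely the $\mathcal{F}_t$-conditional expectations of $\boldsymbol{\Psi}_\mathrm{o}$ and $\boldsymbol{\Psi}_\mathrm{c}$ (which is what the i.i.d.-in-time structure and common noise pdfs of Assumptions~\ref{as:observationnoise}--\ref{as:communicationnoise} deliver, see Appendix~B), so $\mathbb{E}[\boldsymbol{\Gamma}^t\mid\mathcal{F}_t] = \mathbf{0}$. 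The candidate limit is $\mathbf{x}^\ast := \mathbf{1}_N\otimes\boldsymbol{\theta}^\ast$; since $\varphi$ is odd (Lemma~6, Appendix~B), $\boldsymbol{\varphi}_\mathrm{c}(\mathbf{0}) = \boldsymbol{\varphi}_\mathrm{o}(\mathbf{0}) = \mathbf{0}$, whence $\mathbf{R}(\mathbf{x}^\ast) = \mathbf{0}$.

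\textbf{Lyapunov drift.} I would take $V(\mathbf{x}) = \|\mathbf{x} - \mathbf{x}^\ast\|^2$. Writing $\mathbf{e} = \mathbf{x} - \mathbf{x}^\ast$, so $\mathbf{x}_i - \mathbf{x}_j = \mathbf{e}_i - \mathbf{e}_j$ and $[\mathbf{H}\mathbf{e}]_i = \mathbf{h}_i^\top\mathbf{e}_i$, oddness of $\boldsymbol{\varphi}_\mathrm{c},\boldsymbol{\varphi}_\mathrm{o}$ and summation over edges yield $\langle\mathbf{e},\mathbf{R}(\mathbf{x})\rangle = -\tfrac{b}{a}\sum_{\{i,j\}\in E}\langle\mathbf{e}_i-\mathbf{e}_j,\boldsymbol{\varphi}_\mathrm{c}(\mathbf{e}_i-\mathbf{e}_j)\rangle - \sum_{i=1}^{N}(\mathbf{h}_i^\top\mathbf{e}_i)\,\varphi_\mathrm{o}(\mathbf{h}_i^\top\mathbf{e}_i)$. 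Each term is $\ge 0$, since for an odd nondecreasing $\varphi$ with $\varphi(u)>0$ for $u>0$ (properties inherited from Assumption~\ref{as:nonlinearity} via Lemma~6) one has $u\varphi(u)\ge 0$ with equality iff $u=0$. Hence $\langle\mathbf{e},\mathbf{R}(\mathbf{x})\rangle\le 0$, and it can vanish only if $\mathbf{e}_i=\mathbf{e}_j$ on every edge — so, by connectivity of $G$, $\mathbf{e}=\mathbf{1}_N\otimes\mathbf{v}$ — and simultaneously $\mathbf{h}_i^\top\mathbf{e}_i=0$ for all $i$; then $\mathbf{v}^\top\big(\sum_i\mathbf{h}_i\mathbf{h}_i^\top\big)\mathbf{v}=0$ forces $\mathbf{v}=\mathbf{0}$ by Assumption~\ref{as:newtworkmodelandobservability}(2), i.e. $\mathbf{e}=\mathbf{0}$. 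Thus $\langle\mathbf{x}-\mathbf{x}^\ast,\mathbf{R}(\mathbf{x})\rangle<0$ for every $\mathbf{x}\ne\mathbf{x}^\ast$, so $\mathbf{x}^\ast$ is the unique root of $\mathbf{R}$. Because the convolution in \eqref{eq:phi-def} smooths the (at most measure-zero) discontinuities of $\Psi$, the maps $\varphi_\mathrm{o},\varphi_\mathrm{c}$ and hence $\mathbf{R}$ are continuous (Lemma~6), so on each annulus $\{\varepsilon\le\|\mathbf{x}-\mathbf{x}^\ast\|\le 1/\varepsilon\}$ the continuous function $\langle\mathbf{x}-\mathbf{x}^\ast,\mathbf{R}(\mathbf{x})\rangle$ has a strictly negative supremum, which is the uniform drift condition Theorem~4 requires.

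\textbf{Growth, noise, step size, conclusion.} The remaining hypotheses are immediate. By Assumption~\ref{as:nonlinearity}(5), $|\Psi_\mathrm{o}|,|\Psi_\mathrm{c}|\le c_1$, hence $|\varphi_\mathrm{o}|,|\varphi_\mathrm{c}|\le c_1$ as well, so $\mathbf{R}$ is uniformly bounded and trivially satisfies $\|\mathbf{R}(\mathbf{x})\|^2\le K(1+\|\mathbf{x}\|^2)$; and since $\boldsymbol{\zeta}^t$, $\boldsymbol{\eta}^t$ are differences of quantities bounded by $c_1$, we get $\|\boldsymbol{\Gamma}^t\|\le K'$ deterministically, so $\mathbb{E}[\|\boldsymbol{\Gamma}^t\|^2\mid\mathcal{F}_t]\le (K')^2$ — this is exactly where boundedness of the nonlinearities absorbs the possibly infinite variances of the heavy-tailed noises. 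Finally $\alpha_t=a/(t+1)^\delta$ with $\delta\in(0.5,1]$ gives $\sum_t\alpha_t=\infty$ and $\sum_t\alpha_t^2<\infty$. Applying Theorem~4 then yields $\mathbf{x}^t\to\mathbf{x}^\ast=\mathbf{1}_N\otimes\boldsymbol{\theta}^\ast$ almost surely, i.e. $\mathbf{x}_i^t\to\boldsymbol{\theta}^\ast$ a.s. for each agent~$i$.

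\textbf{Main obstacle.} I expect the only genuinely delicate step to be the strict negativity of the drift away from $\mathbf{x}^\ast$ — in particular, excluding the \emph{consensus-but-biased} directions $\mathbf{e}=\mathbf{1}_N\otimes\mathbf{v}$, which is where graph connectivity, the observability Assumption~\ref{as:newtworkmodelandobservability}(2), and the strict sign property $u\varphi_\mathrm{o}(u)>0$ for $u\ne 0$ must be combined. A secondary, easy-to-overlook point is that $\Psi$ may be discontinuous, so the raw iteration \eqref{eq:alg2} has a discontinuous mean field; replacing $\Psi$ by its averaged version $\varphi$ through \eqref{eq:phi-def} restores the continuity needed for the Lyapunov argument, which is why the analysis is carried out on \eqref{eq:algfin}.
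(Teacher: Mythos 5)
Your proposal is correct and follows essentially the same route as the paper's proof: casting \eqref{eq:algfin} in the stochastic-approximation form of Theorem~4, using the Lyapunov function $V(\mathbf{x})=\|\mathbf{x}-\mathbf{1}_N\otimes\boldsymbol{\theta}^\ast\|^2$, splitting the drift into the consensus term (vanishing only on the agreement subspace) and the innovation term (strictly positive there by observability), and exploiting boundedness of the nonlinearities to satisfy the moment condition despite the infinite-variance noises. Your explicit compactness argument for the strict negativity of the supremum over the annulus, and your explicit appeal to graph connectivity (which the paper uses implicitly, e.g.\ via $\lambda_2(\mathbf{L})>0$), are minor refinements of the same argument.
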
 
	\noindent Theorem~\ref{theorem-almost-surely} establishes almost sure convergence of the proposed algorithm~\eqref{eq:alg2}, whether observation or communication noises have finite or infinite moments of order greater then one. 
 On the other hand, if we set at least one of the functions ${\Psi}_\mathrm{o}, {\Psi}_\mathrm{c}$ to be identity functions (and thus recover either the $\mathcal{LU}$ scheme from~\cite{KMR} or the method from~\cite{Ourwork}), the resulting method fails to converge (See Appendix~D). In other words, the methods in~\cite{KMR} and~\cite{Ourwork} fail to converge under the simultaneous presence of heavy-tailed observation and communication noises.
	
	\begin{proof} (Proof of Theorem~\ref{theorem-almost-surely})
		\\\noindent The proof consists of verifying conditions B1--B5 of Theorem~4 (See Appendix A).
        {First, we define quantities $\mathbf{r}(\mathbf{x})$ and $\boldsymbol{\gamma}(t+1,\mathbf{x},\omega)$ by:
	\begin{align}
		\mathbf{r}(\mathbf{x})&= -\frac{b}{a}\mathbf{L}_{\boldsymbol{\varphi}_\mathrm{c}}(\mathbf{x})- \mathbf{H}^{\top}\boldsymbol{\varphi}_\mathrm{o}\left(     
		\mathbf{H}\left(\mathbf{x}-\left(\mathbf{1}_{N}\otimes \boldsymbol{\theta}^{\ast}\right)
		\right)\right),\label{eqn:r}\\
		\boldsymbol{\gamma}(t+1,\mathbf{x},\omega)&=-\frac{b}{a}\boldsymbol{\eta}^t+\mathbf{H}^{\top}\boldsymbol{\zeta}^t. \label{eqn:gamma}
	\end{align}
	Here, $\omega$ denotes a canonical element of the underlying probability space $(\Omega, \mathcal{F}, \mathbb {P})$.}
		
		\noindent Condition B1 holds because $\mathbf{r}(\cdot)$ is $\mathcal{B}^{MN}$ measurable and $\boldsymbol{\gamma}(t + 1, \cdot, \cdot)$ is $\mathcal{B}^{MN} \otimes \mathcal{F}$ measurable for each $t$, where $\mathcal{B}^{MN}$ is the Borel sigma algebra on $\mathbb{R}^{MN}$. Consider the filtration $\mathcal{F}_t$, $t=1,2,...,$ where $\mathcal{F}_t$ is the $\sigma$- algebra generated by $\{\mathbf{n}^s\}_{s=0}^{t-1}$ and $\{\boldsymbol{\xi}^s_{ij}\}_{s=0}^{t-1}.$ We have that the family of random vectors 
		$\boldsymbol{\gamma}(t+1,\mathbf{x},\omega)$ is $\mathcal{F}_t$ measurable, zero-mean and independent of $\mathcal{F}_{t-1}$. Hence, condition B2 holds.
		
		\noindent We now show that condition B3 also holds. We use the following Lyapunov function $V:\mathbb{R}^{MN}\to\mathbb{R},$ \begin{align}\label{eqn:lyapunov}
		    V(\mathbf{x})=||\mathbf{x}-\mathbf{1}_N\otimes\boldsymbol{\theta}^\ast||^2,
		\end{align}
		which is clearly twice continuously differentiable and has uniformly bounded second order partial derivatives. The gradient of $V$ equals $\nabla V(\mathbf{x}) = 2\left(\mathbf{x}-\mathbf{1}_{N}\otimes \boldsymbol{\theta}^{\ast}\right)$. We must show that
		\begin{align}\label{eq:innerproduct}
		\sup\limits_{\mathbf{x}\in S_\epsilon}\langle\mathbf{r}(\mathbf{x}),\nabla V(\mathbf{x})\rangle<0,
		\end{align}
		where $S_\epsilon=\{
		\mathbf{x} \in {\mathbb R}^{MN}:\,
		\|\mathbf{x} - \mathbf{1}_{N}\otimes \boldsymbol{\theta}^{\ast}\| \in (\epsilon,1/\epsilon) \}$. 
		For any $\mathbf{x}\in\mathbb{R}^{MN}$, we have:
		\begin{align}\label{eqn:T1T2}
			&\langle \mathbf{r}(\mathbf{x}), \nabla V(\mathbf{x})\rangle = 2\left(\mathbf{x}-\mathbf{1}_{N}\otimes \boldsymbol{\theta}^{\ast}\right)^{\top}\left(-\frac{b}{a}\mathbf{L}_{\boldsymbol{\varphi}_\mathrm{c}}(\mathbf{x})- \mathbf{H}^{\top}\boldsymbol{\varphi}_\mathrm{o}\left(     
			\mathbf{H}\left(\mathbf{x}^{t}-\left(\mathbf{1}_{N}\otimes \boldsymbol{\theta}^{\ast}\right)
			\right)\right)\right)\nonumber\\
			&=-\frac{2b}{a}\underbrace{\left(\mathbf{x}-\mathbf{1}_{N}\otimes \boldsymbol{\theta}^{\ast}\right)^{\top}\mathbf{L}_{\boldsymbol{\varphi}_\mathrm{c}}(\mathbf{x})}_{\text{$T_1(\mathbf{x})$}}
			-\underbrace{\left(     
				\mathbf{H}\left(\mathbf{x}-\left(\mathbf{1}_{N}\otimes \boldsymbol{\theta}^{\ast}\right)
				\right)\right)^{\top}\boldsymbol{\varphi}_\mathrm{o}\left(     
				\mathbf{H}\left(\mathbf{x}-\left(\mathbf{1}_{N}\otimes \boldsymbol{\theta}^{\ast}\right)
				\right)\right)}_{\text{$T_2(\mathbf{x})$}}.
		\end{align}
		The terms $T_1(\mathbf{x})$ and $T_2(\mathbf{x})$ can be written respectively as
		\begin{align*}
			T_1(\mathbf{x}) &= \sum_{\{i,j\}\in E,\, i<j}\left(\mathbf{x}_{i}-\mathbf{x}_{j}\right)^{\top}\boldsymbol{\varphi}_\mathrm{c}\left(\mathbf{x}_{i}-\mathbf{x}_{j}\right)=\sum_{\{i,j\}\in E,\, i<j}\mathbf{g}{\top}\boldsymbol{\varphi}_\mathrm{c}\left(\mathbf{g}\right), \\ T_2(\mathbf{x})&=\sum_{i=1}^{N} \hat{\mathbf{g}}_i \,\varphi_\mathrm{o} (\hat{\mathbf{g}}_i),
		\end{align*}
		where $\hat{\mathbf{g}}=\mathbf{H}^{\top}\boldsymbol{\varphi}_\mathrm{o}\left(     
		\mathbf{H}\left(\mathbf{x}^{t}-\left(\mathbf{1}_{N}\otimes \boldsymbol{\theta}^{\ast}\right)
		\right)\right)$, $\mathbf{g}=\mathbf{x}_{i}-\mathbf{x}_{j}$ and $\mathbf{g}{\top}\boldsymbol{\varphi}_\mathrm{c}\left(\mathbf{g}\right)=\sum_{\ell=1}^{M}\mathbf{g}_{\ell} {\varphi_\mathrm{c}}\left(\mathbf{g}_{\ell}\right)$. Using the fact that both of the functions $\varphi_\mathrm{c}$ and $\varphi_\mathrm{o}$ are odd functions, for which we have that $\varphi(a)>0$ if $a>0$, we have that $\langle \mathbf{r}(\mathbf{x}), \nabla V(\mathbf{x})\rangle\geq0$ for all $\mathbf{x}\in\mathbb{R}^{MN}$ (see Appendix B). Moreover, recalling the fact that function $\varphi_\mathrm{c}$ is continuous at zero, and equal to zero only at zero, we have that $T_1(\mathbf{x})$ is equal to zero if and only if $\mathbf{x}-\mathbf{1}_{N}\otimes \boldsymbol{\theta}^{\ast}=\mathbf{1}_{N}\otimes \mathbf{m},$ for $\mathbf{m}\in\mathbb{R}^M$ (see Lemma~6 in Appendix B). We only consider the case when $\mathbf{m}\neq0$, since from $\mathbf{m}=0$  we have that $\mathbf{x}=\mathbf{1}_{N}\otimes \boldsymbol{\theta}^{\ast}$, which is not in the set $S_\epsilon.$
		However, for that choice of $\mathbf{x}-\mathbf{1}_{N}\otimes \boldsymbol{\theta}^{\ast}$ we have that
		\begin{align*}
			T_2(\mathbf{1}_{N}\otimes \boldsymbol{\theta}^{\ast}+\mathbf{1}_{N}\otimes \mathbf{m})&=\left(     
			\mathbf{H} \, \mathbf{1}_{N}\otimes \mathbf{m}\right)^{\top}\boldsymbol{\varphi}_\mathrm{o}\left(     
			\mathbf{H}\,\mathbf{1}_{N}\otimes \mathbf{m} \right)\\
			&=\sum_{i=1}^N \left( \mathbf{h}_i^\top \mathbf{m}\right) \varphi_\mathrm{o} \left(\mathbf{h}_i^\top \mathbf{m}\right)>0,
		\end{align*}
		since $\mathbf{h}_i^\top \mathbf{m}$ and $\varphi_\mathrm{o} \left(\mathbf{h}_i^\top \mathbf{m}\right)$ have the same sign. Hence, for all $\epsilon>0$ we have that $\sup\limits_{\mathbf{x}\in S_\epsilon}\langle\mathbf{r}(\mathbf{x}),\nabla V(\mathbf{x})\rangle<0.$ Thus, condition B3 also holds.
		
		\noindent Now we inspect condition B4. From equation~\eqref{eqn:r} we have that 
		\begin{align}\label{eq:boundedr}
			\left\|\mathbf{r}(\mathbf{x})\right\|^2 \leq  \left\| \frac{b}{a}\mathbf{L}_{\boldsymbol{\varphi}_\mathrm{c}}(\mathbf{x}-\mathbf{1}_{N}\otimes \boldsymbol{\theta}^{\ast})\right\|^2 +\left\| \mathbf{H}^{\top}\boldsymbol{\varphi}_\mathrm{o}\left(     
			\mathbf{H}\left(\mathbf{x}-\left(\mathbf{1}_{N}\otimes \boldsymbol{\theta}^{\ast}\right)
			\right)\right)\right\|^2
			\leq c_1 (1+V(\mathbf{x})),
		\end{align}
		for some positive constant $c_1$ (see Appendix B).
		Moreover, we have that 
		\begin{align}\label{eq:gammabound}
			\left\|\boldsymbol{\gamma}(t+1,\mathbf{x},\omega)\right\|^2&\leq\left\|\frac{b}{a}\boldsymbol{\eta}^t\right\|^2+\left\|\mathbf{H}^{\top}\boldsymbol{\zeta}^t\right\|^2
		\end{align}
		which leads to
		\begin{align}\label{eq:boundedg}
			\mathbb{E}\left[\left\|\boldsymbol{\gamma}(t+1, \mathbf{x}^{t}, \omega)\right\|^2\right] \leq c_2(1+V(\mathbf{x})), 
		\end{align}
		for some positive constant $c_2.$
		Finally, we have that 
		\begin{align*}
			\left\|\mathbf{r}(\mathbf{x})\right\|^2 +\mathbb{E}\left[\left\|\boldsymbol{\gamma}(t+1, \mathbf{x}^{t}, \omega)\right\|^2\right]\leq c_3(1+V(\mathbf{x})),
		\end{align*}
		for some positive constant $c_3.$ Setting that $\epsilon\to0^+$ in~\eqref{eq:innerproduct}, for all $\mathbf{x}\in\mathbb{R}^{MN},$ we have that $\langle\mathbf{r}(\mathbf{x}),\nabla V(\mathbf{x}) \rangle \leq 0.$  Thus,
		\begin{align*}
		\left\|\mathbf{r}(\mathbf{x})\right\|^2 +\mathbb{E}\left[\left\|\boldsymbol{\gamma}(t+1, \mathbf{x}^{t}, \omega)\right\|^2\right]\leq c_3(1+V(\mathbf{x}))-k\langle\mathbf{r}(\mathbf{x}),\nabla V(\mathbf{x})\rangle
		\end{align*}
		for every $k>0.$ Therefore, condition B4 also holds. Condition B5 holds by the definition of the algorithm~\eqref{eq:alg2}. Thus, almost sure convergence is proved.
	\end{proof}
	\subsection{Asymptotic normality}\label{subsection:an}
	We now consider asymptotic normality of the proposed estimator~\eqref{eq:alg2}. We have the following theorem.
	\begin{theorem}[Asymptotic normality]
		\label{theorem-asymptotic-normality}
		Let Assumptions \ref{as:newtworkmodelandobservability}-\ref{as:nonlinearity} hold. 
		Consider algorithm~\eqref{eq:alg2} with step-size 
		$\alpha_t=a/(t+1)^{\delta}$, $t=0,1,...,$ $a>0$, with  $\delta=1$. Then, 
		the normalized sequence of iterates $\{
		\sqrt{t+1} (  \mathbf{x}^t-\mathbf{1}_N\otimes\boldsymbol{\theta}^\ast ) \}$ 
		converges in distribution to a zero-mean multivariate normal random vector, i.e., the following holds:
		\begin{align*}
			\sqrt{t+1}(\mathbf{x}^t-\mathbf{1}_N\otimes\boldsymbol{\theta}^\ast)\Rightarrow\mathcal{N}(\mathbf{0},\mathbf{S}),
		\end{align*}
		where the asymptotic covariance matrix $\mathbf{S}$ equals:
		\begin{equation}
			\label{eqn-asympt-var}
			\mathbf{S}=a^2\int\limits_{0}^\infty e^{\boldsymbol{\Sigma} v}\mathbf{S}_0e^{\boldsymbol{\Sigma}^\top v}dv.
		\end{equation}
		Here,  
		$\mathbf{S}_0=\frac{b^2}{a^2}\sigma_\mathrm{c}^2\Diag\left( \{d_i\,\mathbf{I}_M\}\right)-\frac{b}{a}\mathbf{K}_\mathrm{c,o} \mathbf{H} -\frac{b}{a}\mathbf{H}^\top\mathbf{K}_\mathrm{c,o}^\top  +\sigma_\mathrm{o}^2\mathbf{H}^\top\mathbf{H}$;
		$\sigma_\mathrm{o}^2 = \int |\Psi_\mathrm{o}(w)|^2 $ $d \Phi_\mathrm{o}(w)$
		is the effective observation noise variance 
		after passing through the nonlinearity~$\Psi_\mathrm{o}$;
		$\sigma_\mathrm{c}^2 = \int |\Psi_\mathrm{c}(w)|^2 d \Phi_\mathrm{c}(w)$
		is the effective communication noise variance 
		after passing through the nonlinearity~$\Psi_\mathrm{c}$; $\mathbf{K}_\mathrm{c,o}\in\mathbf{R}^{MN\times N}$ is the effective cross-covariance matrix between the observation and the communication noise after passing through the appropriate nonlinearity, i.e., the $(k,s)$ element of the matrix $\mathbf{K}_\mathrm{c,o}$ is given by {$[(\mathbf{K}_\mathrm{c,o})]_{ks}=\sum\limits_{j\in\Omega_i}\int\int\Psi_\mathrm{c}(w_{ij\ell})\Psi_\mathrm{o}(w_k)p^{\mathrm{c,o}}_{k,ij\ell}(w_{ij\ell},w_k)dw_{ij\ell}dw_k.$ Here, $\ell$ satisfies the following: $s=M(i-1)+\ell$; and $p^{\mathrm{c,o}}_{k,ij\ell}$ is the joint probability density function} for the $k$-th observation noise $n_k$ and the  $\ell$-th element of the communication noise $[(\boldsymbol{\xi}_{ij})]_\ell$.
		We also recall the observation matrix $\mathbf{H}$ in \eqref{eq:obs_vec};  
		functions $\varphi_\mathrm{c}$, $\varphi_\mathrm{o}$ appropriate versions of function $\varphi$ in~\eqref{eq:phi-def};  
		and $\Sigma=\frac{1}{2}\mathbf{I}-a(\frac{b}{a}\varphi_\mathrm{c}^\prime(0)\mathbf{L}\otimes \mathbf{I}_M + \varphi_\mathrm{o}^\prime(0) \mathbf{H}^{\top}\mathbf{H});$ here, $a$ is taken large enough such that matrix $\boldsymbol{\Sigma}$ is stable.
	\end{theorem}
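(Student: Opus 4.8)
(Proof of Theorem~\ref{theorem-asymptotic-normality})
The plan is to invoke the central limit theorem for stochastic approximation recursions underlying the framework of~\cite{Nevelson,KMR} (stated in Appendix~A, the asymptotic counterpart of the result used for Theorem~\ref{theorem-almost-surely}), applied to the representation~\eqref{eq:algfin}, i.e.\ to $\mathbf{x}^{t+1}=\mathbf{x}^t+\alpha_t\left(\mathbf{r}(\mathbf{x}^t)+\boldsymbol{\gamma}(t+1,\mathbf{x}^t,\omega)\right)$ with $\mathbf{r},\boldsymbol{\gamma}$ as in~\eqref{eqn:r}--\eqref{eqn:gamma}, step-size $\alpha_t=a/(t+1)$ (so $\delta=1$). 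Beyond the form of the recursion and the a.s.\ convergence $\mathbf{x}^t\to\mathbf{1}_N\otimes\boldsymbol{\theta}^\ast$ already supplied by Theorem~\ref{theorem-almost-surely}, the conditions to be checked are: (i) a first-order expansion $\mathbf{r}(\mathbf{x})=\mathbf{B}(\mathbf{x}-\mathbf{1}_N\otimes\boldsymbol{\theta}^\ast)+o(\|\mathbf{x}-\mathbf{1}_N\otimes\boldsymbol{\theta}^\ast\|)$ near the equilibrium, with the scaled Jacobian $a\mathbf{B}+\tfrac{1}{2}\mathbf{I}$ stable; (ii) convergence of the conditional noise covariance $\mathbb{E}[\boldsymbol{\gamma}(t+1,\mathbf{x},\omega)\boldsymbol{\gamma}(t+1,\mathbf{x},\omega)^\top]\to\mathbf{S}_0$ as $\mathbf{x}\to\mathbf{1}_N\otimes\boldsymbol{\theta}^\ast$; and (iii) a Lindeberg-type tail bound on $\boldsymbol{\gamma}$. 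I would verify these in turn.

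For (i), I would differentiate $\mathbf{r}$ at $\mathbf{x}=\mathbf{1}_N\otimes\boldsymbol{\theta}^\ast$: since $\varphi_\mathrm{c},\varphi_\mathrm{o}$ vanish and are differentiable at $0$ with strictly positive derivative there (Lemma~6 in Appendix~B), the block map $\mathbf{L}_{\boldsymbol{\varphi}_\mathrm{c}}$ has Jacobian $\varphi_\mathrm{c}^\prime(0)(\mathbf{L}\otimes\mathbf{I}_M)$ at consensus and $\mathbf{H}^\top\boldsymbol{\varphi}_\mathrm{o}(\mathbf{H}\cdot)$ has Jacobian $\varphi_\mathrm{o}^\prime(0)\mathbf{H}^\top\mathbf{H}$, whence $\mathbf{B}=-\tfrac{b}{a}\varphi_\mathrm{c}^\prime(0)(\mathbf{L}\otimes\mathbf{I}_M)-\varphi_\mathrm{o}^\prime(0)\mathbf{H}^\top\mathbf{H}$ with an $o(\|\cdot\|)$ remainder, so $\boldsymbol{\Sigma}=\tfrac{1}{2}\mathbf{I}+a\mathbf{B}$ as in the statement. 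The positivity $\varphi_\mathrm{c}^\prime(0)>0$, $\varphi_\mathrm{o}^\prime(0)>0$ is precisely what Lemma~6 delivers from the fact that each $\Psi$ is discontinuous at $0$ or strictly increasing near $0$ (Assumption~\ref{as:nonlinearity}.6) together with positivity of $p_\mathrm{o},p_\mathrm{c}$ near $0$ (Assumptions~\ref{as:observationnoise}.4 and~\ref{as:communicationnoise}.3). Consequently $-\mathbf{B}$ is symmetric and positive definite, by the argument already used in the proof of Theorem~\ref{theorem-almost-surely} (on the null space of $\mathbf{L}\otimes\mathbf{I}_M$ one invokes Assumption~\ref{as:newtworkmodelandobservability}.2, and off that subspace the Laplacian term is strictly positive); hence for $a>1/(2\lambda_1(-\mathbf{B}))$ every eigenvalue of $\boldsymbol{\Sigma}$ has negative real part, i.e.\ $\boldsymbol{\Sigma}$ is stable, which is also what makes the integral in~\eqref{eqn-asympt-var} converge.

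For (iii), note that $|\Psi_\mathrm{o}|,|\Psi_\mathrm{c}|\le c_1$ (Assumption~\ref{as:nonlinearity}.5) forces $|\varphi_\mathrm{o}|,|\varphi_\mathrm{c}|\le c_1$, so $\boldsymbol{\zeta}^t,\boldsymbol{\eta}^t$, and hence $\boldsymbol{\gamma}(t+1,\mathbf{x},\omega)$, are uniformly bounded by a deterministic constant independent of $t,\mathbf{x},\omega$; the Lindeberg condition is then immediate. For (ii), since the noises are i.i.d.\ in $t$ and, for each fixed $\mathbf{x}$, $\boldsymbol{\gamma}(t+1,\mathbf{x},\cdot)$ is independent of $\mathcal{F}_t$, the conditional covariance along the trajectory equals $\phi(\mathbf{x}^t)$ with $\phi(\mathbf{x})=\mathbb{E}[\boldsymbol{\gamma}(t+1,\mathbf{x},\omega)\boldsymbol{\gamma}(t+1,\mathbf{x},\omega)^\top]$, and $\phi$ is continuous in $\mathbf{x}$ by dominated convergence (bounded $\Psi$, integrable $p_\mathrm{o},p_\mathrm{c}$, discontinuities of $\Psi$ on a null set); so $\phi(\mathbf{x}^t)\to\phi(\mathbf{1}_N\otimes\boldsymbol{\theta}^\ast)$ a.s., and it suffices to evaluate $\phi$ at the equilibrium. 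There $\boldsymbol{\zeta}^t=\boldsymbol{\Psi}_\mathrm{o}(\mathbf{n}^t)$ and the $i$-th block of $\boldsymbol{\eta}^t$ equals $\sum_{j\in\Omega_i}\boldsymbol{\Psi}_\mathrm{c}(\boldsymbol{\xi}_{ij}^t)$ (using $\varphi_\mathrm{o}(0)=\varphi_\mathrm{c}(0)=0$). Expanding $\boldsymbol{\gamma}\boldsymbol{\gamma}^\top=\tfrac{b^2}{a^2}\boldsymbol{\eta}^t(\boldsymbol{\eta}^t)^\top-\tfrac{b}{a}\boldsymbol{\eta}^t(\boldsymbol{\zeta}^t)^\top\mathbf{H}-\tfrac{b}{a}\mathbf{H}^\top\boldsymbol{\zeta}^t(\boldsymbol{\eta}^t)^\top+\mathbf{H}^\top\boldsymbol{\zeta}^t(\boldsymbol{\zeta}^t)^\top\mathbf{H}$ and taking expectations, independence across agents and arcs together with oddness of $\Psi_\mathrm{c},\Psi_\mathrm{o}$ (which kills every off-diagonal term, within and across blocks) yields $\mathbb{E}[\boldsymbol{\eta}^t(\boldsymbol{\eta}^t)^\top]=\sigma_\mathrm{c}^2\Diag(\{d_i\mathbf{I}_M\})$ and $\mathbb{E}[\boldsymbol{\zeta}^t(\boldsymbol{\zeta}^t)^\top]=\sigma_\mathrm{o}^2\mathbf{I}_N$, while the only surviving cross-correlations are those between $[\boldsymbol{\xi}_{ij}^t]_\ell$ and $n_k^t$, producing exactly the matrix $\mathbf{K}_\mathrm{c,o}$ of the statement; recombining the four pieces reproduces $\mathbf{S}_0$.

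With (i)--(iii) in place, the stochastic-approximation central limit theorem gives $\sqrt{t+1}(\mathbf{x}^t-\mathbf{1}_N\otimes\boldsymbol{\theta}^\ast)\Rightarrow\mathcal{N}(\mathbf{0},\mathbf{S})$, where $\mathbf{S}$ is the unique solution of the Lyapunov equation $\boldsymbol{\Sigma}\mathbf{S}+\mathbf{S}\boldsymbol{\Sigma}^\top=-a^2\mathbf{S}_0$, equivalently $\mathbf{S}=a^2\int_0^\infty e^{\boldsymbol{\Sigma}v}\mathbf{S}_0e^{\boldsymbol{\Sigma}^\top v}\,dv$, which is~\eqref{eqn-asympt-var}. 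I expect the main obstacle to be item (i): showing that the smoothed nonlinearities $\varphi_\mathrm{o},\varphi_\mathrm{c}$ are differentiable at $0$ with \emph{strictly positive} derivative even when $\Psi$ is discontinuous, and then turning this, via the observability assumption, into stability of $\boldsymbol{\Sigma}$; a close second is the continuity of $\mathbf{x}\mapsto\phi(\mathbf{x})$ in (ii), which likewise has to rely on the regularizing convolution with $p_\mathrm{o},p_\mathrm{c}$ rather than on any smoothness of $\Psi$ itself.
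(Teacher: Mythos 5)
Your proposal is correct and follows essentially the same route as the paper: both proofs verify conditions C1--C5 of the Nevelson--Has'minskii-type stochastic approximation CLT (Theorem~4 in Appendix~A) applied to the representation~\eqref{eq:algfin}, with the same linearization $\mathbf{B}=-\tfrac{b}{a}\varphi_\mathrm{c}^\prime(0)\mathbf{L}\otimes\mathbf{I}_M-\varphi_\mathrm{o}^\prime(0)\mathbf{H}^\top\mathbf{H}$, the same stability argument via positive definiteness of $-\mathbf{B}$, and the same identification of $\mathbf{S}_0$. You actually supply more detail than the paper at two points it leaves to references --- the explicit expansion of $\mathbb{E}[\boldsymbol{\gamma}\boldsymbol{\gamma}^\top]$ at the equilibrium yielding $\mathbf{K}_\mathrm{c,o}$, and the observation that bounded $\Psi_\mathrm{o},\Psi_\mathrm{c}$ make $\boldsymbol{\gamma}$ uniformly bounded so that C5 is immediate --- but the underlying argument is the same.
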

	
	\begin{remark}
Notice that, for the assumed setting,
$\sigma_{\mathrm{c}}^2$ and $\sigma_{\mathrm{o}}^2$
 are finite. Also, $\mathbf{K}_\mathrm{c,o}$ is finite, i.e.,  $\|\mathbf{K}_\mathrm{c,o}\|<\infty$,  since we have that
    \begin{align*}
        |\int\int\Psi_\mathrm{c}(w_1)\Psi_\mathrm{o}(w_2)d\Phi^{\mathrm{c,o}}|\leq \int\int|\Psi_\mathrm{c}(w_1)\Psi_\mathrm{o}(w_2)|d\Phi^{\mathrm{c,o}}<\frac{1}{2}\sigma_{\mathrm{c}}^2+\frac{1}{2}\sigma_{\mathrm{o}}^2.
    \end{align*}
    %
	\end{remark}

	\begin{remark}
	If we assume that observation and communication noise are mutually independent, the only difference from the previous theoretical results occurs in the $\mathbf{A}(t,\mathbf{x})$, i.e., in the $\mathbf{S}_0$. Under this setting, matrix $\mathbf{S}_0$ is now equal to
	\begin{align*}
		\mathbf{S}_0=\frac{b^2}{a^2}\sigma_\mathrm{c}^2\Diag\left( \{d_i\,\mathbf{I}_M\}\right)+\sigma_\mathrm{o}^2\mathbf{H}^\top\mathbf{H},
	\end{align*}
	which is expected, since the effective cross-covariance matrix $\mathbf{K}_\mathrm{c,o} $ is now equal to zero.
	\end{remark}
	
	Theorem~\ref{theorem-asymptotic-normality} establishes asymptotic normality of the proposed method. This is achieved with heavy-tailed observation and communication noise an the nonlinearities $\Psi_\mathrm{o}$ and $\Psi_\mathrm{c}$ with uniformly bounded outputs. Moreover, the theorem explicitly evaluates the corresponding asymptotic variance. When the two noises are mutually independent, $\Psi_\mathrm{o}$ is identity, and observation noise variance is finite, we recover the result in~\cite{Ourwork}, Theorem 3.5, as a special case. That is, a notable difference with respect to~\cite{Ourwork} is the ability to handle here mutually correlated observation and communication noises. The effect of correlation is complex in general, however, as shown in Section~\ref{section:examples} later, generally a stronger positive noises correlation leads to a lower asymptotic variance. Intuitively, at an extreme, a full positive correlation  practically means that only one effective noise exists in the system, and hence it can be suppressed more easily. Further, note that Theorem~\ref{theorem-asymptotic-normality} establishes a local asymptotic rate $O(1/t)$ of $\mathbf{x}^t$ to zero, in the weak convergence sense, when $\alpha_t=a/(t+1).$ We show later (see Theorem~\ref{theorem-MSE}) that a global MSE rate $O(1/t^{\hat{\delta}})$ with a lower (worse) degree $\hat{\delta}$ can be established when 
	step-size $\alpha_t=a/(t+1)^{\delta}$, 
	$\delta \in (0.5,1)$, is used.

    {We next discuss asymptotic efficiency\footnote{An estimator $\mathbf{y}^t$ of an unknown parameter $\boldsymbol{\theta}^\star$, for which we have that $\sqrt{t+1}(\mathbf{y}^t-\boldsymbol{\theta}^\ast)\Rightarrow\mathcal{N}(\mathbf{0},\boldsymbol{\Sigma})$, is said to be asymptotically efficient if $\mathbf{S}=\mathbf{I}^{-1}(\boldsymbol{\theta}^\star),$ where $\mathbf{I}(\boldsymbol{\theta}^\star)$ is the Fisher information matrix. The Fisher information matrix represents the best achievable asymptotic covariance by any estimator, as determined by the well-known Cramer-Rao bound (see \cite{Nevelson}).} of the proposed estimator. We first briefly review the relevant existing work to better position our results.  
    First, consider the best linear centralized estimator $\mathbf{x}_{\mathrm{cent}}^t$ of $\boldsymbol{\theta}^\star,$ that has access to measurements from all sensors (nodes) $n=1,2,...,N$ at all times $t=0,1,...$. In the general case, additionally assuming that observation noise has finite variance, the best linear centralized estimator $\mathbf{x}_{\mathrm{cent}}^t$ is asymptotically normal and has the lowest asymptotic covariance matrix $\mathbf{S}_{\mathrm{cent}}$ among all estimators of $\boldsymbol{\theta}^\star$ when the only knowledge of observation noise is variance and no other information of noise distribution is known.
Moreover, its asymptotic covariance matrix $\mathbf{S}_{\mathrm{cent}}$ attains the Cramér-Rao lower bound if the observation noise is Gaussian (see for example~\cite{SoummyaAdaptive}). 
    On the other hand, when the probability density function is known, the centralized estimator in~\cite{PolyakNL} can be tuned to the pdf of the observation noise so that it achieves the Cramér-Rao bound. In the distributed setting, when there is no communication noise, the authors of~\cite{SoummyaAdaptive} develop an estimator which is asymptotically normal and has the optimal asymptotic covariance matrix $\mathbf{S}_{\mathrm{cent}}$ (optimal in the sense that the asymptotic covariance matrix is the same as for the best linear centralized estimator $\mathbf{x}_{\mathrm{cent}}^t$). We now discuss the asymptotic covariance matrix $\mathbf{S}$ of the proposed estimator~\eqref{eq:alg2}. This quantity depends on the system parameters, including network topology and communication noise. Therefore, in the general case, the proposed estimator~\eqref{eq:alg2} is not asymptotically efficient, i.e., $\mathbf{S}\neq \mathbf{I}^{-1}(\boldsymbol{\theta}^\star),$
 where $\mathbf{I}(\boldsymbol{\theta}^\star)$ is the Fisher information matrix. However, with respect to the proposed distributed recursive estimator, we make the following observations. 1) First, 
the estimator is order-optimal in the weak convergence sense; that is,  its (weak convergence sense) rate of error decay is the same as that of the asymptotically efficient estimator. 2) The corresponding ``convergence constant,'' i.e., the asymptotic covariance, is different from that of the centralized Cramér-Rao-optimal estimator, and it is hence not optimal. We note that the paper provides major contributions with respect to state of the art, as it gives the first distributed estimator that ensures almost sure convergence in the presence of infinite variance correlated sensing and communication noises; moreover, its weak convergence sense rate of convergence is order-optimal. It remains an interesting future work direction to explore whether an 
optimal asymptotic covariance can be achieved in this setting via distributed estimators. In view of the results~\cite{PolyakNL} for the centralized setting, it is likely that this cannot be achieved unless the nonlinearities are tuned to the noise pdfs that in turn have to be known.}
	
	\begin{proof} (Proof of Theorem~\ref{theorem-asymptotic-normality}) 
		\\\noindent We prove Theorem~\ref{theorem-asymptotic-normality} in the same manner as Theorem~\ref{theorem-almost-surely} is proved, i.e., by verifying assumptions C1-C5 of Theorem~4 in (see Appendix A).
		Function $\mathbf{r}(\cdot)$ defined by~\eqref{eqn:r} can be written as
		\begin{align*}
			\mathbf{r}(\mathbf{x})= -\frac{b}{a}\varphi_\mathrm{c}^\prime(0) \mathbf{L}\otimes \mathbf{I}_M\,\left(\mathbf{x} - \mathbf{1}_{N}\otimes \boldsymbol{\theta}^{\ast} \right)- \varphi_\mathrm{o}^\prime(0) \mathbf{H}^{\top}     
			\mathbf{H}\left(\mathbf{x}-\mathbf{1}_{N}\otimes \boldsymbol{\theta}^{\ast}
			\right)+\boldsymbol{\delta}(\mathbf{x}),
		\end{align*}
		Here, mapping $\boldsymbol{\delta}:\mathbb{R}^{MN}\to\mathbb{R}^{MN}$ is given by: 
		\begin{align}\label{eq:deltabold}
			\boldsymbol{\delta}(\mathbf{x})=-\frac{b}{a}\mathbf{L}_{\boldsymbol{\delta}_\mathrm{c}}(\mathbf{x})-\mathbf{H}^\top\boldsymbol{\delta}_\mathrm{o}\left( \mathbf{H}\left(\mathbf{x}-\mathbf{1}_{N}\otimes \boldsymbol{\theta}^{\ast}
			\right)\right).
		\end{align}
		Next, mapping $\mathbf{L}_{\boldsymbol{\delta}_\mathrm{c}}(\mathbf{x}):\mathbb{R}^{MN}\to\mathbb{R}^{MN}$ is vector of size $MN$ such that the $i$-th $M$-size block equals $\sum\limits_{j\in\Omega_{i}}\boldsymbol{\delta}_\mathrm{c}(\mathbf{x}_i-\mathbf{x}_j),$ $i=1,2,...,N,$
		mappings $\boldsymbol{\delta}_\mathrm{c}:\mathbb{R}^M\to\mathbb{R}^M$,  $\boldsymbol{\delta}_\mathrm{o}:\mathbb{R}^N\to\mathbb{R}^N$ are component-wise maps of $\delta_\mathrm{c}$ and $\delta_\mathrm{o}$ 
        are first order residuals that corresponds to  $\varphi_\mathrm{c}$ and $\varphi_\mathrm{o}$ respectively, i.e., $\boldsymbol{\delta}_\mathrm{c}(\mathbf{y}_1,\mathbf{y}_1,...,\mathbf{y}_M)=[\delta_\mathrm{c}(\mathbf{y}_1),\delta_\mathrm{c}(\mathbf{y}_2),...,\delta_\mathrm{c}(\mathbf{y}_M)]^\top$ and $\boldsymbol{\delta}_\mathrm{o}(\hat{\mathbf{y}}_1,\hat{\mathbf{y}}_1,...,\hat{\mathbf{y}}_N)=[\delta_\mathrm{o}(\hat{\mathbf{y}}_1),\delta_\mathrm{o}(\hat{\mathbf{y}}_2),...,\delta_\mathrm{o}(\hat{\mathbf{y}}_M)]^\top$ for $\mathbf{y}\in\mathbb{R}^N, \hat{\mathbf{y}}\in\mathbb{R}^M$ (see Appendix B). 
		
		\noindent Thus, $\mathbf{r}(\mathbf{x})$ admits representation in~(36) of Theorem~4 in Appendix A for $\mathbf{B}=-\frac{b}{a}\varphi_\mathrm{c}^\prime(0)\mathbf{L}\otimes \mathbf{I}_M - \varphi_\mathrm{o}^\prime(0) \mathbf{H}^{\top}\mathbf{H}$ and mapping $\boldsymbol{\delta}(\cdot)$ defined by~\eqref{eq:deltabold}.	Therefore, condition C1 holds. Since we use that $\alpha_{t}=\frac{a}{t+1}$, condition C2 trivially holds. Furthermore, $\boldsymbol{\Sigma}=a\mathbf{B}+\frac{1}{2}\mathbf{I}$ is stable if $a$ is large enough, because matrix $-\mathbf{B}$ is positive definite (See \cite{KMR}). Thus, condition C3 also holds.
		
		\noindent For $\mathbf{A}(t,\mathbf{x})=\mathbb{E}\big[\boldsymbol{\gamma}(t+1,\mathbf{x},\omega)\boldsymbol{\gamma}^{\top}(t+1,\mathbf{x},\omega)\big]$ it is easy to show that
		\begin{align*}        \lim\limits_{t\to\infty,\mathbf{x}\to\boldsymbol{\theta}^\ast}\mathbf{A}(t,\mathbf{x})=\frac{b^2}{a^2}\sigma_\mathrm{c}^2\Diag\left( \{d_i\,\mathbf{I}_M\}\right) - \frac{b}{a}\mathbf{K}_\mathrm{c,o} \mathbf{H} -\frac{b}{a}\mathbf{H}^\top\mathbf{K}_\mathrm{c,o}^\top   +\sigma_\mathrm{o}^2\mathbf{H}^\top\mathbf{H}.
		\end{align*}
 		Therefore, condition C4 also holds. To show that condition~C5 holds, it is suffice to show that the family of random variables $\{\|\boldsymbol{\gamma_{\boldsymbol{\varphi}}}(t+1,\mathbf{x},\omega)\|^2\}_{t=0,1,..., \,\|\mathbf{x}-\boldsymbol{\theta}^\star\|< \epsilon}$ is uniformly integrable. To do that, follow the arguments as in e.g., \cite{KMR} and~\cite{Ourwork}. \end{proof}

	\subsection{Mean squared error convergence}\label{subsection-MSE}
	In this subsection, we state and prove a result on the mean squared error (MSE) convergence rate when both nonlinearities $\Psi_{\mathrm{o}}$ and $\Psi_{\mathrm{c}}$ satisfy part 5' of Assumption~\ref{as:nonlinearity}, i.e., $|\Psi_{\mathrm{o}}|\leq c_{\mathrm{o}},$ $|\Psi_{\mathrm{c}}|\leq c_{\mathrm{c}},$ for some positive constants $c_{\mathrm{o}}$ and $c_{\mathrm{c}}$. Moreover, we set the step size to $\alpha_t=\frac{a}{(t+1)^\delta},$ for $\delta\in(\frac{1}{2},1).$ We have the following theorem.

    \begin{theorem}[MSE convergence]\label{theorem-MSE} Let Assumptions \ref{as:newtworkmodelandobservability}-\ref{as:nonlinearity} hold. Then, for the sequence of iterates $\{\mathbf{x}^t\}$ 
		generated by algorithm~\eqref{eq:algcomp}, provided that the step-size sequence $\{\alpha_t\}$ is given by $\alpha_t=a/(t+1)^\delta,$ $a>0,\delta\in(0.5,1)$, there exists $\hat{\delta} \in (0,1)$ such that $\mathbb{E}[\|\mathbf{x}-\mathbf{1}_{N}\otimes \boldsymbol{\theta}^{\ast}\|^2]=O(1/t^{\hat{\delta}}).$ 
	\end{theorem}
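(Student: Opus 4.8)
My plan is to reduce the vector iteration to a scalar recursion for $V_t:=\mathbb{E}\big[\|\mathbf{x}^t-\mathbf{1}_N\otimes\boldsymbol{\theta}^\ast\|^2\big]$ and then solve that recursion. Writing $\tilde{\mathbf{x}}^t:=\mathbf{x}^t-\mathbf{1}_N\otimes\boldsymbol{\theta}^\ast$ and using representation~\eqref{eq:algfin} together with the oddness of $\boldsymbol{\varphi}_\mathrm{o}$, the error obeys $\tilde{\mathbf{x}}^{t+1}=\tilde{\mathbf{x}}^t+\alpha_t\,\mathbf{r}(\mathbf{x}^t)+\alpha_t\,\boldsymbol{\gamma}(t+1,\mathbf{x}^t,\omega)$, with $\mathbf{r}$ and $\boldsymbol{\gamma}$ as in~\eqref{eqn:r}--\eqref{eqn:gamma}. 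First I would expand $\|\tilde{\mathbf{x}}^{t+1}\|^2$, take $\mathbb{E}[\,\cdot\mid\mathcal{F}_t]$, and use---exactly as in the proof of Theorem~\ref{theorem-almost-surely}---that $\boldsymbol{\zeta}^t$ and $\boldsymbol{\eta}^t$ are conditionally zero-mean, so that the cross term drops out. Since $\Psi_\mathrm{o}$ and $\Psi_\mathrm{c}$ are bounded (Assumption~\ref{as:nonlinearity}), so are $\varphi_\mathrm{o}$ and $\varphi_\mathrm{c}$, hence $\|\mathbf{r}(\mathbf{x})\|$ and $\mathbb{E}\big[\|\boldsymbol{\gamma}(t+1,\mathbf{x},\omega)\|^2\mid\mathcal{F}_t\big]$ are both bounded by one constant $C_0$, uniformly in $\mathbf{x}$ and $t$. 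This gives the basic inequality
\[
V_{t+1}\;\le\;V_t-2\alpha_t\,\mathbb{E}\big[\langle\tilde{\mathbf{x}}^t,\,-\mathbf{r}(\mathbf{x}^t)\rangle\big]+C_0\,\alpha_t^2 .
\]

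The heart of the argument is a global one-point coercivity estimate for the saturated mean field: there is $\mu>0$ with $\langle\tilde{\mathbf{x}},\,-\mathbf{r}(\mathbf{x})\rangle\ge\mu\,\|\tilde{\mathbf{x}}\|^2/(1+\|\tilde{\mathbf{x}}\|)$ for every $\mathbf{x}\in\mathbb{R}^{MN}$. To prove this I would start from the decomposition $\langle\tilde{\mathbf{x}},-\mathbf{r}(\mathbf{x})\rangle=\frac{b}{a}T_1(\mathbf{x})+T_2(\mathbf{x})$ already used in the proof of Theorem~\ref{theorem-almost-surely}, combined with the scalar estimate $u\,\varphi(u)\ge\mu_0\min\{u^2,|u|\}$, valid for some $\mu_0>0$ because $\varphi$ is odd, nondecreasing, positive on $(0,\infty)$ and has $\varphi'(0)>0$ (see Lemma~6 in Appendix~B), together with the elementary bound $\sum_k\min\{a_k^2,|a_k|\}\ge\|\mathbf{a}\|^2/(1+\|\mathbf{a}\|)$. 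Splitting $\tilde{\mathbf{x}}$ into its consensus part $(\mathbf{J}\otimes\mathbf{I}_M)\tilde{\mathbf{x}}$ and its disagreement part $\tilde{\mathbf{x}}_\perp$, the term $T_1$ controls $\tilde{\mathbf{x}}_\perp$ via $\lambda_2(\mathbf{L})>0$ and $T_2$ controls the consensus part via $\lambda_{\min}\!\big(\sum_i\mathbf{h}_i\mathbf{h}_i^\top\big)>0$ (Assumption~\ref{as:newtworkmodelandobservability}); a short case analysis---according to whether or not $\|\tilde{\mathbf{x}}_\perp\|$ is at least a fixed fraction of $\|\tilde{\mathbf{x}}\|$---then glues the two bounds together into the claimed estimate, the delicate configurations being those close to the consensus subspace but far from $\mathbf{1}_N\otimes\boldsymbol{\theta}^\ast$. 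I expect this coercivity estimate, with its saturation (the factor $1/(1+\|\tilde{\mathbf{x}}\|)$ rather than a constant) and the consensus/innovations bookkeeping, to be the technically heaviest step.

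It remains to pass from this pointwise bound to one on $V_t$, which is not automatic because $y\mapsto y^2/(1+y)$ is not linear. Here I would use that, since the nonlinearities are bounded, the increments satisfy $\|\tilde{\mathbf{x}}^{s+1}-\tilde{\mathbf{x}}^s\|\le C_0'\alpha_s$ deterministically, so $\|\tilde{\mathbf{x}}^t\|\le R_t:=\|\tilde{\mathbf{x}}^0\|+C_0'\sum_{s=0}^{t-1}\alpha_s=O(t^{1-\delta})$ pathwise. On $[0,R_t]$ one has $y^2/(1+y)\ge y^2/(1+R_t)$, hence $\mathbb{E}\big[\langle\tilde{\mathbf{x}}^t,-\mathbf{r}(\mathbf{x}^t)\rangle\big]\ge\mu\,V_t/(1+R_t)$ and
\[
V_{t+1}\;\le\;\Big(1-\frac{2\mu\,\alpha_t}{1+R_t}\Big)V_t+C_0\,\alpha_t^2 .
\]

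Finally, with $\alpha_t=a(t+1)^{-\delta}$ and $R_t\le C_1 t^{1-\delta}$ for large $t$, the coefficient $2\mu\alpha_t/(1+R_t)$ is bounded below by $c/t$ for $t$ large, for some $c>0$, while $C_0\alpha_t^2=O(t^{-2\delta})$. Invoking the standard lemma on recursions of the form $V_{t+1}\le(1-c/t)V_t+O(t^{-2\delta})$ then yields $V_t=O(t^{-\hat\delta})$ for any $\hat\delta\in\big(0,\min\{c,\,2\delta-1\}\big)$. Because $\delta\in(\frac12,1)$ we have $2\delta-1\in(0,1)$ and $c>0$, so such a $\hat\delta$ exists in $(0,1)$, which is exactly the assertion; note that $\hat\delta\le 2\delta-1<1$ is a strictly worse degree than the $O(1/t)$ weak-convergence rate of Theorem~\ref{theorem-asymptotic-normality}, as anticipated there.
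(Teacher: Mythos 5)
Your proposal is correct and follows essentially the same route as the paper's proof: a deterministic $O(t^{1-\delta})$ envelope on the iterates coming from the bounded nonlinearities, a lower bound on $u\,\varphi(u)$ that degrades linearly in that envelope, a consensus/observability subspace splitting with a case analysis to combine $T_1$ and $T_2$, and the standard recursion lemma yielding $\hat{\delta}<\min\{2\delta-1,\,c\}$. The only difference is organizational: you first prove a time-independent saturated coercivity bound $\langle\tilde{\mathbf{x}},-\mathbf{r}(\mathbf{x})\rangle\geq\mu\|\tilde{\mathbf{x}}\|^2/(1+\|\tilde{\mathbf{x}}\|)$ and insert the envelope $R_t$ only at the end, whereas the paper inserts the envelope $g_t$ directly into the quadratic-form lower bounds for $T_1$ and $T_2$ (its Lemmas 1--4) before combining them.
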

	Theorem~\ref{theorem-MSE} establishes 
	a MSE convergence rate of the 
	proposed estimator~\eqref{eq:algcomp} under the simultaneous 
	presence of heavy-tailed (possibly infinite variance) observation and communication noises, when both the observation and communication nonlinearities have uniformly bounded outputs.
	 This is in contrast with recent studies on 
	 distributed estimation in heavy-tailed noises like~\cite{Ourwork}
	 that only establishes a.s. and asymptotic normality results. 
	 We refer to the proof of Theorem~\ref{theorem-MSE} 
	 for the exact value of the convergence rate power~$\hat{\delta}$.

	\textbf{Setting up the proof.} 
	  We now prove Theorem~\ref{theorem-MSE} through a 
	  sequence of intermediate results (Lemmas).
	  Recall quantities $\mathbf{r}(\cdot),$ $\boldsymbol{\gamma}(\cdot,\cdot,\cdot)$ and $V(\cdot)$ from \eqref{eqn:r}, $\eqref{eqn:gamma}$ and \eqref{eqn:lyapunov} respectively. 
	   The proof will be based on establishing a sufficient decay 
	   on quantity $\mathbb{E}[V(\mathbf{x}^t)]$. First, notice that  algorithm~\eqref{eq:algfin} can be written as
	\begin{align*}
	    \mathbf{x}^{t+1}=\mathbf{x}^{t}+\alpha_t\left(\mathbf{r}(\mathbf{x}^t)+\boldsymbol{\gamma}(t+1,\mathbf{x}^t,\omega)\right).
	\end{align*}
	Moreover, we have that
	\begin{align*}
	    V(\mathbf{x}^{t+1})&=V(\mathbf{x}^t)+2\alpha_t\left(\mathbf{x}^t-\mathbf{1}_{N}\otimes \boldsymbol{\theta}^{\ast}\right)^\top\left(\mathbf{r}(\mathbf{x}^t)+\boldsymbol{\gamma}(t+1,\mathbf{x}^t,\omega)\right)\\&+\alpha_t^2\|\mathbf{r}(\mathbf{x}^t)+\boldsymbol{\gamma}(t+1,\mathbf{x}^t,\omega)\|^2\\
	    &= V(\mathbf{x}^t)+2\alpha_t\left(\mathbf{x}^t-\mathbf{1}_{N}\otimes \boldsymbol{\theta}^{\ast}\right)^\top\left(\mathbf{r}(\mathbf{x}^t)+\boldsymbol{\gamma}(t+1,\mathbf{x}^t,\omega)\right)+\alpha_t^2\,c',
	\end{align*}
	for positive constant $c'=\|\mathbf{r}(\mathbf{x}^t)+\boldsymbol{\gamma}(t+1,\mathbf{x}^t,\omega)\|^2<\infty.$ Therefore, taking a conditional expectation 
	with respect to $\mathcal{F}_t$, we have:
    \begin{align}\label{eq:expfilt}
        \mathbb{E}[V(\mathbf{x}^{t+1})|\mathcal{F}_t]=V(\mathbf{x}^t)+2\alpha_t\left(\mathbf{x}^t-\mathbf{1}_{N}\otimes \boldsymbol{\theta}^{\ast}\right)^\top\mathbf{r}(\mathbf{x}^t)+\alpha_t^2\,c'.
    \end{align}
    Also, from equation~\eqref{eqn:T1T2}, it follows that 
    \begin{align}\label{eqn:boundT1T2}
        \left(\mathbf{x}^t-\mathbf{1}_{N}\otimes \boldsymbol{\theta}^{\ast}\right)^\top\mathbf{r}(\mathbf{x}^t)=-\frac{b}{a}T_1(\mathbf{x}^t)-T_2(\mathbf{x}^t).
    \end{align}
    We next need to show that the quantity in \eqref{eqn:boundT1T2}
     is ``sufficiently negative'', relative to 
     quantity $V(\mathbf{x}^t)$. 
     This is achieved through a sequence of lemmas. First, we upper bound  quantities $\|\mathbf{x}^t\|$ and $\|\mathbf{x}^t-\mathbf{1}_{N}\otimes \boldsymbol{\theta}^{\ast}\|.$
	\begin{lemma}\label{lemma:boundsonx}
	    Let Assumptions \ref{as:newtworkmodelandobservability}-\ref{as:nonlinearity} hold. Then, for the sequence of iterates $\{\mathbf{x}^t\}$ 
		generated by algorithm~\eqref{eq:algcomp}, provided that the step-size sequence $\{\alpha_t\}$ is given by $\alpha_t=a/(t+1)^\delta,$ $a>0,\delta\in(0.5,1),$ we have that, for any outcome $\omega$:
		\begin{align}
		    \|\mathbf{x}^t\|&\leq g_t=\|\mathbf{x}^0\|+\left(b\,\sqrt{MN} d\, c_{\mathrm{c}}+a\, \|\mathbf{H}\|\sqrt{N}c_{\mathrm{o}}\right)\,\frac{t^{1-\delta}}{1-\delta},\label{eqn:g_t}\\
		    \|\mathbf{x}^t-\mathbf{1}_{N}\otimes \boldsymbol{\theta}^{\ast}\|&\leq g_t'=\|\mathbf{x}^0-\mathbf{1}_{N}\otimes \boldsymbol{\theta}^{\ast}\|+\left(b\,\sqrt{MN} d\, c_{\mathrm{c}}+a\, \|\mathbf{H}\|\sqrt{N}c_{\mathrm{o}}\right)\,\frac{t^{1-\delta}}{1-\delta}.\label{eqn:g_t'}
		\end{align}
		Consequently, $\|\mathbf{H}\left(\mathbf{x}^t-\mathbf{1}_{N}\otimes \boldsymbol{\theta}^{\ast}\right)\|\leq\|\mathbf{H}\|\,g_t'.$
	\end{lemma}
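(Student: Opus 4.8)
The plan is to observe that, once both nonlinearities are uniformly bounded, the per-iteration increment $\mathbf{x}^{t+1}-\mathbf{x}^t$ is controlled deterministically by the step size alone, and then to telescope. This is a purely pathwise argument: no expectations and none of the noise-tail assumptions enter, only the boundedness $|\Psi_\mathrm{o}|\le c_\mathrm{o}$, $|\Psi_\mathrm{c}|\le c_\mathrm{c}$ assumed in this subsection.

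First I would bound the two driving terms in~\eqref{eq:algcomp} uniformly in $\omega$. Since every entry of $\boldsymbol{\Psi}_\mathrm{o}(\mathbf{z}^t-\mathbf{H}\mathbf{x}^t)\in\mathbb{R}^N$ is at most $c_\mathrm{o}$ in absolute value, $\|\mathbf{H}^\top\boldsymbol{\Psi}_\mathrm{o}(\mathbf{z}^t-\mathbf{H}\mathbf{x}^t)\|\le\|\mathbf{H}\|\sqrt{N}\,c_\mathrm{o}$. Since $\boldsymbol{\Psi}_\mathrm{c}$ acts componentwise with $|\Psi_\mathrm{c}|\le c_\mathrm{c}$, each $M$-block $\sum_{j\in\Omega_i}\boldsymbol{\Psi}_\mathrm{c}(\mathbf{x}_i^t-\mathbf{x}_j^t+\boldsymbol{\xi}_{ij}^t)$ has norm at most $d_i\sqrt{M}\,c_\mathrm{c}\le d\sqrt{M}\,c_\mathrm{c}$ (with $d:=\max_i d_i$ as in the statement), so $\|\mathbf{L}_{\boldsymbol{\Psi}_\mathrm{c}}(\mathbf{x}^t)\|\le\sqrt{MN}\,d\,c_\mathrm{c}$. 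Combining these and using $\alpha_t=a/(t+1)^\delta$,
\[
\|\mathbf{x}^{t+1}-\mathbf{x}^t\|\le\alpha_t\Big(\tfrac{b}{a}\sqrt{MN}\,d\,c_\mathrm{c}+\|\mathbf{H}\|\sqrt{N}\,c_\mathrm{o}\Big)=\frac{b\sqrt{MN}\,d\,c_\mathrm{c}+a\,\|\mathbf{H}\|\sqrt{N}\,c_\mathrm{o}}{(t+1)^{\delta}}.
\]
Then I would telescope: $\mathbf{x}^t=\mathbf{x}^0+\sum_{s=0}^{t-1}(\mathbf{x}^{s+1}-\mathbf{x}^s)$ together with the triangle inequality gives $\|\mathbf{x}^t\|\le\|\mathbf{x}^0\|+\big(b\sqrt{MN}\,d\,c_\mathrm{c}+a\,\|\mathbf{H}\|\sqrt{N}\,c_\mathrm{o}\big)\sum_{k=1}^{t}k^{-\delta}$, and since $x\mapsto x^{-\delta}$ is decreasing with $\delta\in(0.5,1)$, the integral comparison $\sum_{k=1}^{t}k^{-\delta}\le\int_0^t x^{-\delta}\,dx=t^{1-\delta}/(1-\delta)$ yields~\eqref{eqn:g_t}. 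Repeating the identical argument on the shifted iterates $\mathbf{x}^t-\mathbf{1}_N\otimes\boldsymbol{\theta}^\ast$ (whose increments coincide with those of $\mathbf{x}^t$) gives~\eqref{eqn:g_t'}, and the last claim follows from $\|\mathbf{H}(\mathbf{x}^t-\mathbf{1}_N\otimes\boldsymbol{\theta}^\ast)\|\le\|\mathbf{H}\|\,\|\mathbf{x}^t-\mathbf{1}_N\otimes\boldsymbol{\theta}^\ast\|\le\|\mathbf{H}\|\,g_t'$.

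I do not expect a real obstacle here; the only thing requiring care is the bookkeeping of the dimensional and degree constants in the two norm bounds — in particular the $\sqrt{M}$ and $\sqrt{N}$ factors produced by stacking blocks and the replacement of $d_i$ by the maximum degree $d$ — and observing that the resulting bound is uniform over outcomes $\omega$ precisely because it is the (bounded) nonlinearities, rather than the possibly infinite-variance noises, that govern the increments.
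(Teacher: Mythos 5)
Your proposal is correct and is essentially the paper's own proof: both bound the consensus and innovation terms uniformly by $\sqrt{MN}\,d\,c_{\mathrm{c}}$ and $\|\mathbf{H}\|\sqrt{N}\,c_{\mathrm{o}}$ using only the boundedness of the nonlinearities, then telescope the increments and control $\sum_{k=1}^{t}k^{-\delta}$ by the integral $t^{1-\delta}/(1-\delta)$. Your integral-comparison step is stated slightly more carefully than the paper's, but the argument is the same.
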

	\begin{proof}
	    Using the boundness of the nonlinearities, we have that
	$\|\mathbf{L}_{\boldsymbol{\Psi}_\mathrm{c}}(\mathbf{x})\|^2\leq \sqrt{MN} d\, c_{\mathrm{c}}$ and
	$\|\mathbf{H}^{\top}\boldsymbol{\Psi}_\mathrm{o}\left(     
	\mathbf{H}\left(\mathbf{1}_{N}\otimes \boldsymbol{\theta}^{\ast}-\mathbf{x}^t\right)+\mathbf{n}^{t}
	\right)\|\leq \|\mathbf{H}\|\sqrt{N}c_{\mathrm{o}},$
	  where $d=\max\limits_{i}d_i.$ Therefore, recalling the algorithm~\eqref{eq:algonoise}, for all $t>0$ we have that 
		\begin{align*}
		    \|\mathbf{x}^t\|&\leq \|\mathbf{x}^{t-1}\|+\alpha_{t-1}\underbrace{\left(\frac{b}{a}\sqrt{MN} d\, c_{\mathrm{c}}+ \|\mathbf{H}\|\sqrt{N}c_{\mathrm{o}}\right)}_{c}
		    \leq \|\mathbf{x}^{t-2}\|+\alpha_{t-2}\,c+\alpha_{t-1}\,c\\
		    &\leq \|\mathbf{x}^0\|+c\,\sum\limits_{j=0}^{t-1}\frac{a}{(1+j)^\delta}\leq\|\mathbf{x}^0\|+c\,\int\limits_{0}^{t-1}\frac{a}{(1+s)^\delta}ds\leq \|\mathbf{x}^0\|+c\,a\,\frac{t^{1-\delta}}{1-\delta}.
		\end{align*}
	Analogously, for all $t>0$, we have that $\|\mathbf{x}^t-\mathbf{1}_{N}\otimes \boldsymbol{\theta}^{\ast}\|\leq g_t',$ and as a consequence $\|\mathbf{H}\left(\mathbf{x}^t-\mathbf{1}_{N}\otimes \boldsymbol{\theta}^{\ast}\right)\|\leq\|\mathbf{H}\|\,g_t'.$ 
	\end{proof}

Next, we have the following Lemma that bounds quantities $T_1(x)$ and $T_2(x).$
	
	\begin{lemma}\label{lemma:T1T2}
	    Let Assumptions \ref{as:newtworkmodelandobservability}-\ref{as:nonlinearity} hold. Then, for the sequence of iterates $\{\mathbf{x}^t\}$ 
		generated by algorithm~\eqref{eq:algcomp}, provided that the step-size sequence $\{\alpha_t\}$ is given by $\alpha_t=a/(t+1)^\delta,$ $a>0,\delta\in(0.5,1)$, we have that there exist positive constants $G_c$ and $G_0$ such that, for any outcome $\omega$:
		\begin{align*}
            T_1(\mathbf{x}^t)&\geq\frac{\varphi_c'(0)G_{\mathrm{c}}}{4g_t}\left(\mathbf{x}^t-\mathbf{1}_{N}\otimes \boldsymbol{\theta}^{\ast}\right)^\top \mathbf{L}\otimes\mathbf{I} \left(\mathbf{x}^t-\mathbf{1}_{N}\otimes \boldsymbol{\theta}^{\ast}\right),\\
            T_2(\mathbf{x}^t)&\geq     \frac{\varphi_o'(0)G_o}{2\|\mathbf{H}\|g_t'}
            \left(\mathbf{x}^t-\mathbf{1}_{N}\otimes \boldsymbol{\theta}^{\ast}\right)^\top\mathbf{H}^\top\mathbf{H}\left(\mathbf{x}^t-\mathbf{1}_{N}\otimes \boldsymbol{\theta}^{\ast}\right),
        \end{align*}
	\end{lemma}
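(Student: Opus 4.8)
The plan is to reduce both inequalities to a single scalar fact about the functions $\varphi_\mathrm{c}$ and $\varphi_\mathrm{o}$, and then to sum it over the edges of the graph (for $T_1$) and over the sensors (for $T_2$). Recall from~\eqref{eqn:T1T2} and the display following it that $T_1(\mathbf{x}^t)=\sum_{\{i,j\}\in E,\ i<j}\sum_{\ell=1}^{M}\mathbf{g}_\ell\,\varphi_\mathrm{c}(\mathbf{g}_\ell)$ with $\mathbf{g}=\mathbf{x}_i^t-\mathbf{x}_j^t$, and $T_2(\mathbf{x}^t)=\sum_{i=1}^{N}\hat{\mathbf{g}}_i\,\varphi_\mathrm{o}(\hat{\mathbf{g}}_i)$ with $\hat{\mathbf{g}}_i=\mathbf{h}_i^\top(\mathbf{x}_i^t-\boldsymbol{\theta}^{\ast})$, i.e. the $i$-th entry of $\mathbf{H}(\mathbf{x}^t-\mathbf{1}_N\otimes\boldsymbol{\theta}^{\ast})$. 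By the standard graph-Laplacian quadratic-form identity, $(\mathbf{x}^t-\mathbf{1}_N\otimes\boldsymbol{\theta}^{\ast})^\top(\mathbf{L}\otimes\mathbf{I})(\mathbf{x}^t-\mathbf{1}_N\otimes\boldsymbol{\theta}^{\ast})=\sum_{\{i,j\}\in E,\ i<j}\sum_{\ell=1}^{M}\mathbf{g}_\ell^2$, and likewise $(\mathbf{x}^t-\mathbf{1}_N\otimes\boldsymbol{\theta}^{\ast})^\top\mathbf{H}^\top\mathbf{H}(\mathbf{x}^t-\mathbf{1}_N\otimes\boldsymbol{\theta}^{\ast})=\|\mathbf{H}(\mathbf{x}^t-\mathbf{1}_N\otimes\boldsymbol{\theta}^{\ast})\|^2=\sum_{i=1}^{N}\hat{\mathbf{g}}_i^2$. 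Hence it suffices to bound each scalar summand $a\,\varphi(a)$ below by a multiple of $a^2$, where the multiple is allowed to deteriorate like $1/R$ with $R$ an a priori upper bound on $|a|$.

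The crux is the following scalar claim. Let $\varphi\in\{\varphi_\mathrm{c},\varphi_\mathrm{o}\}$; by Lemma~6 in Appendix~B together with the remark following~\eqref{eq:phi-def}, $\varphi$ is odd, nondecreasing, strictly positive on $(0,\infty)$, bounded, and differentiable at $0$ with $\varphi'(0)>0$. I claim that for every $R_0>0$ there is a constant $c=c(\varphi,R_0)>0$ such that $a\,\varphi(a)\ge (c/R)\,a^2$ for all $R\ge R_0$ and all $|a|\le R$. By oddness it is enough to treat $a>0$. Since $\varphi(0)=0$, differentiability at $0$ gives $\varphi(a)/a\to\varphi'(0)$ as $a\to0^+$, so there is $\epsilon_0\in(0,R_0]$ with $\varphi(a)/a\ge\varphi'(0)/2$ on $(0,\epsilon_0]$; and for $a\in[\epsilon_0,R]$ monotonicity gives $\varphi(a)\ge\varphi(\epsilon_0)>0$, hence $\varphi(a)/a\ge\varphi(\epsilon_0)/R$. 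Therefore $\inf_{0<a\le R}\varphi(a)/a\ge\min\{\varphi'(0)/2,\ \varphi(\epsilon_0)/R\}\ge(1/R)\min\{R_0\varphi'(0)/2,\ \varphi(\epsilon_0)\}$ whenever $R\ge R_0$, which is the claim with $c:=\min\{R_0\varphi'(0)/2,\ \varphi(\epsilon_0)\}$.

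It remains to feed in the ranges. By Lemma~\ref{lemma:boundsonx}, $|\mathbf{g}_\ell|\le|\mathbf{x}_{i,\ell}^t|+|\mathbf{x}_{j,\ell}^t|\le2\|\mathbf{x}^t\|\le2g_t$ and $|\hat{\mathbf{g}}_i|\le\|\mathbf{H}(\mathbf{x}^t-\mathbf{1}_N\otimes\boldsymbol{\theta}^{\ast})\|\le\|\mathbf{H}\|\,g_t'$. For $t\ge1$ both $g_t$ and $g_t'$ are bounded below by the positive constant $\rho:=\big(b\sqrt{MN}\,d\,c_\mathrm{c}+a\|\mathbf{H}\|\sqrt{N}\,c_\mathrm{o}\big)/(1-\delta)$, so we may take $R_0=2\rho$ when applying the scalar claim to $\varphi_\mathrm{c}$ (with $R=2g_t$) and $R_0=\|\mathbf{H}\|\rho$ when applying it to $\varphi_\mathrm{o}$ (with $R=\|\mathbf{H}\|g_t'$). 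This gives $\mathbf{g}_\ell\varphi_\mathrm{c}(\mathbf{g}_\ell)\ge\big(c(\varphi_\mathrm{c},2\rho)/(2g_t)\big)\mathbf{g}_\ell^2$ and $\hat{\mathbf{g}}_i\varphi_\mathrm{o}(\hat{\mathbf{g}}_i)\ge\big(c(\varphi_\mathrm{o},\|\mathbf{H}\|\rho)/(\|\mathbf{H}\|g_t')\big)\hat{\mathbf{g}}_i^2$; setting $G_\mathrm{c}:=2c(\varphi_\mathrm{c},2\rho)/\varphi_\mathrm{c}'(0)$ and $G_\mathrm{o}:=2c(\varphi_\mathrm{o},\|\mathbf{H}\|\rho)/\varphi_\mathrm{o}'(0)$, summing the first inequality over all edges and coordinates and the second over $i=1,\dots,N$, and invoking the two quadratic-form identities above, yields exactly the two displayed bounds. (For $t=0$ the inequalities are read with $g_0,g_0'$ from Lemma~\ref{lemma:boundsonx} and are vacuous if $g_0=0$; only $t\ge1$ is used in the proof of Theorem~\ref{theorem-MSE}.)

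I expect the scalar claim to be the only real obstacle. The delicate point is uniformity of $c$ in $R$ — only a lower bound $R_0$ on $R$ is at hand — which is exactly why the step size must decay slowly enough that $g_t,g_t'$ stay bounded away from zero, and why one must splice the near-zero regime (where $\varphi'(0)>0$ is indispensable; this is where the density-positivity conditions~4 and~3 of Assumptions~\ref{as:observationnoise} and~\ref{as:communicationnoise} enter, via the remark after~\eqref{eq:phi-def}) with the saturated regime $a\ge\epsilon_0$ (where only monotonicity and strict positivity of $\varphi$ are available, and where boundedness of $\varphi$ is precisely what makes the $1/R$ — equivalently the $1/g_t$ — loss unavoidable).
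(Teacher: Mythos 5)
Your proof is correct and follows essentially the same route as the paper's: reduce both bounds to a scalar inequality of the form $a\,\varphi(a)\ge \mathrm{const}\cdot a^{2}/R$ on $|a|\le R$, instantiate $R$ with the a priori bounds $2g_t$ and $\|\mathbf{H}\|g_t'$ from Lemma~\ref{lemma:boundsonx}, and sum over edges (via the Laplacian quadratic form) and over agents. The only real difference is that the paper imports the scalar inequality wholesale as Lemma~\ref{lemma:bound} (Lemma~5.5 of~\cite{gardientHTnoise}) --- whose statement, incidentally, carries a sign typo ($|\varphi(a)|\le \varphi'(0)G|a|/(2g)$ is written where the lower bound $\ge$ is what is actually used) --- whereas you prove it from scratch by splicing the linearization near zero (where $\varphi'(0)>0$ is needed) with monotonicity and positivity on $[\epsilon_0,R]$, and you make explicit the uniformity of the constant in $R\ge R_0$ that the application to the growing sequences $g_t,\,g_t'$ genuinely requires.
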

	To prove Lemma~\ref{lemma:T1T2}, we make use of the following Lemma from~\cite{gardientHTnoise} (see Lemma 5.5 in~\cite{gardientHTnoise}).
    \begin{lemma}\label{lemma:bound}
        Consider function $\varphi$ in \eqref{eq:phi-def}, there exists a positive constant $G$ such that $|\varphi(a)|\leq \frac{\varphi'(0)\, G |a|}{2\,g},$ for all $|a|<g.$
    \end{lemma}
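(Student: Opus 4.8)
The plan is to derive the bound directly from the behaviour of $\varphi$ near the origin together with its monotonicity, using only the properties that $\varphi$ inherits from $\Psi$ (Lemma~6 in Appendix~B): $\varphi$ is odd, continuous, nondecreasing, satisfies $\varphi(0)=0$ and $\varphi(a)>0$ for $a>0$, and has $\varphi'(0)>0$ (this last fact is precisely what condition~4 of Assumption~\ref{as:observationnoise} and condition~3 of Assumption~\ref{as:communicationnoise} guarantee, via the Remark following~\eqref{eq:phi-def}). Since $\varphi$ is odd we have $|\varphi(a)|=\varphi(|a|)$, so it suffices to argue for $a\in(0,g)$ and then extend to $|a|<g$. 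I emphasize that the estimate that is actually needed, and that the argument below produces, is the \emph{lower} comparison $|\varphi(a)|\ge \frac{\varphi'(0)\,G\,|a|}{2g}$, since in Lemma~\ref{lemma:T1T2} it serves to lower-bound each scalar product $\mathbf g_\ell\varphi_\mathrm c(\mathbf g_\ell)$ and $\mathbf u_k\varphi_\mathrm o(\mathbf u_k)$ by a multiple of $\mathbf g_\ell^2$ and $\mathbf u_k^2$.

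First I would isolate a neighbourhood of $0$ on which $\varphi$ is linearly minorized. Because $\varphi(0)=0$ and $\varphi'(0)>0$, the difference quotient $\varphi(a)/a$ converges to $\varphi'(0)$ as $a\to0^+$, so there exists a radius $\rho>0$, depending only on $\varphi$ and \emph{not} on $g$, such that
\begin{align}\label{eq:near0}
\varphi(a)\ \ge\ \tfrac12\,\varphi'(0)\,a,\qquad 0\le a\le\rho .
\end{align}
This is the only place where $\varphi'(0)>0$ is used in an essential way, and it is the reason the pdfs are required to be strictly positive near the origin.

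Next I would split $(0,g)$ at $\rho$. For $a\in(0,\rho]$, inequality~\eqref{eq:near0} gives $\varphi(a)\ge\tfrac12\varphi'(0)a\ge\frac{\varphi'(0)G}{2g}a$ as soon as $G\le g$. For $a\in[\rho,g)$ I would invoke monotonicity together with \eqref{eq:near0} evaluated at $a=\rho$, namely $\varphi(a)\ge\varphi(\rho)\ge\tfrac12\varphi'(0)\rho$; since on this range $\frac{\varphi'(0)G}{2g}a<\frac{\varphi'(0)G}{2g}\,g=\tfrac12\varphi'(0)G$, the target holds provided $G\le\rho$. Choosing $G:=\min\{\rho,g\}$ satisfies both constraints simultaneously, so $\varphi(a)\ge\frac{\varphi'(0)G}{2g}a$ for every $a\in(0,g)$, and oddness upgrades this to $|\varphi(a)|\ge\frac{\varphi'(0)G}{2g}|a|$ for all $|a|<g$. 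Note that in the regime where the lemma is applied (with $g=g_t$ or $g=\|\mathbf H\|g_t'$, both tending to infinity), one has $g\ge\rho$ eventually, so $G=\rho$ is a \emph{fixed} positive constant, which is exactly the constant $G_\mathrm{c}$ (resp.\ $G_\mathrm{o}$) fed into Lemma~\ref{lemma:T1T2}.

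The main obstacle is entirely concentrated in the first step: one must be able to fix the radius $\rho$ in~\eqref{eq:near0} independently of $g$, and this hinges on the strict inequality $\varphi'(0)>0$ rather than merely $\varphi'(0)\ge0$. Once $\rho$ is secured, the interval split and the selection $G=\min\{\rho,g\}$ are routine. In writing up I would only need to double-check the matching at the boundary $a=\rho$ (both pieces agree there) and confirm that the constant produced is the one that survives the limit $g\to\infty$, so that a single $G$ can be used uniformly in the downstream application.
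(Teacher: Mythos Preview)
Your proposal is correct, and your diagnosis that the inequality in the stated lemma must read $\ge$ rather than $\le$ is right: the use of Lemma~\ref{lemma:bound} in the proof of Lemma~\ref{lemma:T1T2} (equations~\eqref{eqn:Gc} and~\eqref{eqn:Go}) lower-bounds $a\,\varphi(a)$ by a multiple of $a^2$, which is only possible via $|\varphi(a)|\ge \frac{\varphi'(0)\,G\,|a|}{2g}$. The $\le$ in the lemma as printed is a typo.

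As for comparison with the paper's proof: the paper does not prove Lemma~\ref{lemma:bound} at all but quotes it from~\cite{gardientHTnoise} (Lemma~5.5 there). Your argument is therefore a self-contained substitute, and it is sound. The two ingredients you isolate---the linear minorization $\varphi(a)\ge\tfrac12\varphi'(0)a$ on a fixed neighbourhood $[0,\rho]$ coming from differentiability at zero, and the flat lower bound $\varphi(a)\ge\varphi(\rho)$ on $[\rho,g)$ from monotonicity---are exactly what is needed, and the choice $G=\min\{\rho,g\}$ (hence $G=\rho$ once $g\ge\rho$, which is the only regime that matters since $g=g_t\to\infty$) gives the fixed constant that feeds into Lemma~\ref{lemma:T1T2}. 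Your remark that the independence of $\rho$ from $g$ is the crux, and that it rests precisely on $\varphi'(0)>0$, is the right emphasis.
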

    \begin{proof} (Proof of Theorem~\ref{lemma:T1T2})
    Using Lemma~\ref{lemma:bound} for function $\varphi_c$ we get that there exists a positive constant $G_{\mathrm{c}}$ such that 
    \begin{align}
        T_1(\mathbf{x}^t)&=\sum_{\{i,j\}\in E,\, i<j}\left(\mathbf{x}^t_{i}-\mathbf{x}^t_{j}\right)^{\top}\boldsymbol{\varphi}_\mathrm{c}\left(\mathbf{x}^t_{i}-\mathbf{x}^t_{j}\right)\nonumber\\&=\sum_{\{i,j\}\in E,\, i<j}\sum\limits_{\ell=1}^M\left((\mathbf{x}^t_{i})_\ell-(\mathbf{x}^t_{j})_\ell\right)^{\top}\boldsymbol{\varphi}_\mathrm{c}\left((\mathbf{x}^t_{i})_\ell-(\mathbf{x}^t_{j})_\ell\right)\nonumber\\
        \label{eqn:Gc}&\geq \frac{\varphi_c'(0)G_{\mathrm{c}}}{4g_t} \sum_{\{i,j\}\in E,\, i<j}\|\mathbf{x}^t_i-\mathbf{x}^t_j\|^2=\frac{\varphi_c'(0)G_{\mathrm{c}}}{4g_t}(\mathbf{x}^t)^\top (\mathbf{L}\otimes\mathbf{I}) \mathbf{x}^t\\
        &=\frac{\varphi_c'(0)G_{\mathrm{c}}}{4g_t}\left(\mathbf{x}^t-\mathbf{1}_{N}\otimes \boldsymbol{\theta}^{\ast}\right)^\top \mathbf{L}\otimes\mathbf{I} \left(\mathbf{x}^t-\mathbf{1}_{N}\otimes \boldsymbol{\theta}^{\ast}\right),\nonumber
    \end{align}
    since, from Lemma~\ref{lemma:boundsonx} we have $\|\mathbf{x}^t\|\leq g_t.$
    Analogously, from Lemma~\ref{lemma:bound} we have that for the function $\varphi_o$ there exists a positive constant $G_o$ such that
    \begin{align}
        T_2(\mathbf{x})&=\sum\limits_{i=1}^N \left(
		\mathbf{H}\left(\mathbf{x}-\mathbf{1}_{N}\otimes \boldsymbol{\theta}^{\ast}\right)
		\right)_i{\varphi}_\mathrm{o}\left(\left(
		\mathbf{H}\left(\mathbf{x}-\mathbf{1}_{N}\otimes \boldsymbol{\theta}^{\ast}\right)
		\right)_i\right)\nonumber\\
		\label{eqn:Go}&\geq \frac{\varphi_o'(0)G_o}{2\|\mathbf{H}\|g_t'}\left(\mathbf{x}-\mathbf{1}_{N}\otimes \boldsymbol{\theta}^{\ast}\right)^\top\mathbf{H}^\top\mathbf{H}\left(\mathbf{x}-\mathbf{1}_{N}\otimes \boldsymbol{\theta}^{\ast}\right),
    \end{align}
    since, from Lemma~\ref{lemma:boundsonx} we have  $\|\mathbf{H}\left(\mathbf{x}^t-\mathbf{1}_{N}\otimes \boldsymbol{\theta}^{\ast}\right)\|\leq\|\mathbf{H}\|\,g_t'.$ 
    \end{proof}
    
    We next have the following theorem that analyzes positive definiteness of the matrix $\frac{\varphi_c'(0)G_{\mathrm{c}}}{4g_t}\mathbf{L}\otimes\mathbf{I}+
         \frac{\varphi_o'(0)G_o}{2\|\mathbf{H}\|g_t'} \mathbf{H}^\top\mathbf{H}.$
         
    \begin{lemma}\label{lemma:pdmatrix}
        Let Assumptions \ref{as:newtworkmodelandobservability}-\ref{as:nonlinearity} hold. The following is true for any $\mathbf{x} \in {\mathbb R}^{MN}$: 
        \begin{align*}
         &\left(\mathbf{x}-\mathbf{1}_{N}\otimes \boldsymbol{\theta}^{\ast}\right)^\top\left(    
         \frac{\varphi_c'(0)G_{\mathrm{c}}}{4g_t}\mathbf{L}\otimes\mathbf{I}+
         \frac{\varphi_o'(0)G_o}{2\|\mathbf{H}\|g_t'} \mathbf{H}^\top\mathbf{H}\right)\left(\mathbf{x}-\mathbf{1}_{N}\otimes \boldsymbol{\theta}^{\ast}\right)\\&\geq\min\left\{\frac{\varphi_o'(0)G_o}{2\|\mathbf{H}\|g_t'} \left(\frac{\lambda_{\mathrm{H}}}{N}-\frac{2S_{\mathrm{H}}}{\sqrt{N}}k\right),\frac{b\,\varphi_c'(0)G_{\mathrm{c}}}{4ag_t}\frac{\lambda_2(\mathbf{L})}{1+\frac{1}{k^2}}\right\}\|\mathbf{x}-\mathbf{1}_{N}\otimes \boldsymbol{\theta}^{\ast}\|^2, 
     \end{align*}
     where $g_t$ and $g_t'$ are defined in Lemma~\ref{lemma:boundsonx}, $G_c$ and $G_o$ in Lemma~\ref{lemma:T1T2},  $S_{\mathrm{H}}=\sum\limits_{i=1}^N \|\mathbf{h}_i\|^2,$ $\lambda_{\mathrm{H}}=\lambda_1\left(\sum\limits_{i=1}^N  \mathbf{h}_i \mathbf{h}_i^\top\right)>0$ is the smallest eigenvalue of regular matrix $\sum\limits_{i=1}^N  \mathbf{h}_i \mathbf{h}_i^\top$ (see Assumption~\ref{as:newtworkmodelandobservability}) and recalling that $\lambda_2(\mathbf{L})>0$ is the smallest positive eigenvalue of Laplacian matrix $\mathbf{L}.$
    \end{lemma}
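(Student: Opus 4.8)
\textbf{Proof idea for Lemma~\ref{lemma:pdmatrix}.} Write $\mathbf{y}:=\mathbf{x}-\mathbf{1}_{N}\otimes\boldsymbol{\theta}^{\ast}$ and split it along the consensus subspace as $\mathbf{y}=\mathbf{y}_{C}+\mathbf{y}_{D}$, with $\mathbf{y}_{C}:=(\mathbf{J}\otimes\mathbf{I}_{M})\mathbf{y}$ (all $N$ blocks equal to $\mathbf{m}:=\frac1N\sum_{i}\mathbf{y}_{i}$) and $\mathbf{y}_{D}:=\big((\mathbf{I}_{N}-\mathbf{J})\otimes\mathbf{I}_{M}\big)\mathbf{y}$; these are orthogonal, so $\|\mathbf{y}\|^{2}=\|\mathbf{y}_{C}\|^{2}+\|\mathbf{y}_{D}\|^{2}$. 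The plan is to lower bound the Laplacian and the innovations quadratic forms separately and then absorb the single cross term by a dichotomy, which is precisely what produces the minimum of two expressions in the claim. For the Laplacian part, $(\mathbf{L}\otimes\mathbf{I}_{M})\mathbf{y}_{C}=\mathbf{0}$ and $\lambda_{2}(\mathbf{L})$ is the smallest eigenvalue of $\mathbf{L}\otimes\mathbf{I}_{M}$ on the orthogonal complement of its kernel, hence $\mathbf{y}^{\top}(\mathbf{L}\otimes\mathbf{I}_{M})\mathbf{y}=\mathbf{y}_{D}^{\top}(\mathbf{L}\otimes\mathbf{I}_{M})\mathbf{y}_{D}\ge\lambda_{2}(\mathbf{L})\|\mathbf{y}_{D}\|^{2}$. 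For the innovations part I would use that $\mathbf{H}^{\top}\mathbf{H}$ is block diagonal with blocks $\mathbf{h}_{i}\mathbf{h}_{i}^{\top}$: this gives $\|\mathbf{H}\mathbf{y}_{C}\|^{2}=\mathbf{m}^{\top}\big(\sum_{i}\mathbf{h}_{i}\mathbf{h}_{i}^{\top}\big)\mathbf{m}$, and combining $\|\mathbf{y}_{C}\|^{2}=N\|\mathbf{m}\|^{2}$ with Assumption~\ref{as:newtworkmodelandobservability}(2) yields $\frac{\lambda_{\mathrm{H}}}{N}\|\mathbf{y}_{C}\|^{2}\le\|\mathbf{H}\mathbf{y}_{C}\|^{2}\le\frac{S_{\mathrm{H}}}{N}\|\mathbf{y}_{C}\|^{2}$, while $\|\mathbf{H}\mathbf{y}_{D}\|^{2}=\sum_{i}\big(\mathbf{h}_{i}^{\top}[\mathbf{y}_{D}]_{i}\big)^{2}\le S_{\mathrm{H}}\|\mathbf{y}_{D}\|^{2}$, with $[\mathbf{y}_{D}]_{i}\in\mathbb{R}^{M}$ the $i$-th block.

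Next I would expand $\|\mathbf{H}\mathbf{y}\|^{2}=\|\mathbf{H}\mathbf{y}_{C}\|^{2}+2\langle\mathbf{H}\mathbf{y}_{C},\mathbf{H}\mathbf{y}_{D}\rangle+\|\mathbf{H}\mathbf{y}_{D}\|^{2}$ and bound the middle term by Cauchy--Schwarz together with the three estimates above, obtaining $\|\mathbf{H}\mathbf{y}\|^{2}\ge\frac{\lambda_{\mathrm{H}}}{N}\|\mathbf{y}_{C}\|^{2}-\frac{2S_{\mathrm{H}}}{\sqrt{N}}\|\mathbf{y}_{C}\|\,\|\mathbf{y}_{D}\|$. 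Writing $\mu$ and $\nu$ for the coefficients of $\mathbf{H}^{\top}\mathbf{H}$ and of $\mathbf{L}\otimes\mathbf{I}_{M}$ in the quadratic form $Q(\mathbf{y})$ under consideration (so that $\mu=\frac{\varphi_{\mathrm{o}}'(0)G_{\mathrm{o}}}{2\|\mathbf{H}\|g_{t}'}$ and $\nu=\frac{b\,\varphi_{\mathrm{c}}'(0)G_{\mathrm{c}}}{4ag_{t}}$, consistently with the right-hand side of the claim), plugging in the two lower bounds gives $Q(\mathbf{y})\ge\mu\big(\frac{\lambda_{\mathrm{H}}}{N}\|\mathbf{y}_{C}\|^{2}-\frac{2S_{\mathrm{H}}}{\sqrt{N}}\|\mathbf{y}_{C}\|\|\mathbf{y}_{D}\|\big)+\nu\lambda_{2}(\mathbf{L})\|\mathbf{y}_{D}\|^{2}$.

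Finally I would close the argument by a case split on the free parameter $k>0$, setting $M_{1}:=\mu\big(\frac{\lambda_{\mathrm{H}}}{N}-\frac{2S_{\mathrm{H}}}{\sqrt{N}}k\big)$ and $M_{2}:=\nu\,\lambda_{2}(\mathbf{L})/(1+1/k^{2})$. If $\|\mathbf{y}_{D}\|\le k\|\mathbf{y}_{C}\|$, then $\|\mathbf{y}_{C}\|\|\mathbf{y}_{D}\|\le k\|\mathbf{y}_{C}\|^{2}$, so $Q(\mathbf{y})\ge M_{1}\|\mathbf{y}_{C}\|^{2}+\nu\lambda_{2}(\mathbf{L})\|\mathbf{y}_{D}\|^{2}\ge M_{1}\|\mathbf{y}_{C}\|^{2}+M_{2}\|\mathbf{y}_{D}\|^{2}\ge\min\{M_{1},M_{2}\}\|\mathbf{y}\|^{2}$, where we used $\nu\lambda_{2}(\mathbf{L})\ge M_{2}$. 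If instead $\|\mathbf{y}_{D}\|>k\|\mathbf{y}_{C}\|$, I discard the nonnegative innovations contribution $\mu\|\mathbf{H}\mathbf{y}\|^{2}\ge0$ and keep only $\nu\lambda_{2}(\mathbf{L})\|\mathbf{y}_{D}\|^{2}$; since $\|\mathbf{y}\|^{2}=\|\mathbf{y}_{C}\|^{2}+\|\mathbf{y}_{D}\|^{2}<(1+1/k^{2})\|\mathbf{y}_{D}\|^{2}$ in this regime, this gives $Q(\mathbf{y})\ge M_{2}\|\mathbf{y}\|^{2}\ge\min\{M_{1},M_{2}\}\|\mathbf{y}\|^{2}$. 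The two cases together yield the claimed inequality for every $\mathbf{x}$.

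The step I expect to be the main obstacle is the uniform control of the cross term $\langle\mathbf{H}\mathbf{y}_{C},\mathbf{H}\mathbf{y}_{D}\rangle$: since $\mathbf{H}^{\top}\mathbf{H}$ couples the consensus and disagreement directions, one cannot simply add the two one-sided estimates ($\mathbf{L}\otimes\mathbf{I}_{M}$ positive definite on the disagreement subspace, $\mathbf{H}^{\top}\mathbf{H}$ positive definite on the consensus subspace). The free parameter $k$ and the resulting minimum of two terms are exactly the device for trading off the small-disagreement and large-disagreement regimes; later, in the proof of Theorem~\ref{theorem-MSE}, $k$ is fixed small enough that $M_{1}>0$, and then both $M_{1}$ and $M_{2}$ decay like $1/g_{t}$, feeding the recursion for $\mathbb{E}[V(\mathbf{x}^{t})]$.
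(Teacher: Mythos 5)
Your proposal is correct and follows essentially the same route as the paper's proof: the orthogonal decomposition of $\mathbf{x}-\mathbf{1}_{N}\otimes \boldsymbol{\theta}^{\ast}$ into consensus and disagreement components, the bounds $\lambda_{\mathrm{H}}\|\cdot\|^2/N$ and $\lambda_2(\mathbf{L})\|\cdot\|^2$ on the two quadratic forms, the cross-term estimate $-\tfrac{2S_{\mathrm{H}}}{\sqrt{N}}\|\mathbf{y}_C\|\|\mathbf{y}_D\|$ (you via Cauchy--Schwarz, the paper via a term-by-term bound, with the same result), and the identical dichotomy on $\|\mathbf{y}_D\|\lessgtr k\|\mathbf{y}_C\|$ yielding the minimum of the two constants.
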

    \begin{proof}
        Let us consider matrix $\mathbf{L}\otimes\mathbf{I}+\mathbf{H}^\top\mathbf{H}$ and follow argument as in Appendix~A of~\cite{EXTRA}. For any $\mathbf{x}\in\mathbb{R}^{MN}$, we have that there exist vectors $\mathbf{u}\in \text{span}\{\mathbf{1}\otimes \mathbf{m}| \mathbf{m}\in\mathbb{R}^M\}$ and $\mathbf{v}\in \text{span}\{\mathbf{1}\otimes \mathbf{m}| \mathbf{m}\in\mathbb{R}^M\}^\perp$ such that $\mathbf{x}=\mathbf{u}+\mathbf{v}.$ Firstly, we have that
    \begin{align*}
        \left(\mathbf{u}-\mathbf{1}_{N}\otimes \boldsymbol{\theta}^{\ast}\right)^\top\mathbf{H}^\top\mathbf{H}\left(\mathbf{u}-\mathbf{1}_{N}\otimes \boldsymbol{\theta}^{\ast}\right)&=\sum\limits_{i=1}^N (\hat{\mathbf{u}}-\boldsymbol{\theta}^\star)^\top \mathbf{h}_i \mathbf{h}_i^\top(\hat{\mathbf{u}}-\boldsymbol{\theta}^\star)\\
        &=(\hat{\mathbf{u}}-\boldsymbol{\theta}^\star)^\top\left(\sum\limits_{i=1}^N  \mathbf{h}_i \mathbf{h}_i^\top\right)(\hat{\mathbf{u}}-\boldsymbol{\theta}^\star)\\
        &\geq \lambda_{\mathrm{H}}\|\hat{\mathbf{u}}-\boldsymbol{\theta}^\star\|^2,
    \end{align*}
    where $\hat{\mathbf{u}}\in\mathbb{R}^M$ such that $\mathbf{u}=\mathbf{1}\otimes\hat{\mathbf{u}}.$ Notice here that $\|\mathbf{u}-\mathbf{1}_{N}\otimes \boldsymbol{\theta}^{\ast}\|=\sqrt{N}\|\hat{\mathbf{u}}-\boldsymbol{\theta}^\star\|.$ Secondly, $\left(\mathbf{x}-\mathbf{u}\right)^\top\mathbf{H}^\top\mathbf{H}\left(\mathbf{x}-\mathbf{u}\right)\geq0,$ since $\mathbf{H}^\top\mathbf{H}$ is positive semi-definite matrix. Thirdly, following also holds
    \begin{align*}
        \left(\mathbf{u}-\mathbf{1}_{N}\otimes \boldsymbol{\theta}^{\ast}\right)^\top\mathbf{H}^\top\mathbf{H}\left(\mathbf{x}-\mathbf{u}\right)&=\sum\limits_{i=1}^N(\hat{\mathbf{u}}-\boldsymbol{\theta}^\star)^\top \mathbf{h}_i\mathbf{h}_i^\top (\mathbf{x}_i-\hat{\mathbf{u}})\\
        &\geq-\sum\limits_{i=1}^N \|\hat{\mathbf{u}}-\boldsymbol{\theta}^\star\| \|\mathbf{h}_i\|^2\|\mathbf{x}_i-\hat{\mathbf{u}}\|\\
        &\geq -\|\hat{\mathbf{u}}-\boldsymbol{\theta}^\star\|\|\mathbf{v}\|S_{\mathrm{H}}.
    \end{align*}
   Analogously, we have that $\left(\mathbf{x}-\mathbf{u}\right)^\top\mathbf{H}^\top\mathbf{H}\left(\mathbf{u}-\mathbf{1}_{N}\otimes \boldsymbol{\theta}^{\ast}\right)\geq -\|\hat{\mathbf{u}}-\boldsymbol{\theta}^\star\|\|\mathbf{v}\|S_{\mathrm{H}}.$ Therefore,
    \begin{align*}
        \left(\mathbf{x}-\mathbf{1}_{N}\otimes \boldsymbol{\theta}^{\ast}\right)^\top\mathbf{H}^\top\mathbf{H}\left(\mathbf{x}-\mathbf{1}_{N}\otimes \boldsymbol{\theta}^{\ast}\right)\geq \lambda_{\mathrm{H}}\|\hat{\mathbf{u}}-\boldsymbol{\theta}^\star\|^2-2S_{\mathrm{H}}\|\hat{\mathbf{u}}-\boldsymbol{\theta}^\star\|\|\mathbf{v}\|.
    \end{align*}
    We also have that $\mathbf{u}-\mathbf{1}\otimes\boldsymbol{\theta}^\star\in\text{null}(\mathbf{L}\otimes\mathbf{I})$ and $\mathbf{v}\in\text{Range}(\mathbf{L}\otimes\mathbf{I})$ and, hence, we have that
    \begin{align*}
        \left(\mathbf{x}-\mathbf{1}_{N}\otimes \boldsymbol{\theta}^{\ast}\right)^\top\mathbf{L}\otimes\mathbf{I}\left(\mathbf{x}-\mathbf{1}_{N}\otimes \boldsymbol{\theta}^{\ast}\right)&=\left(\mathbf{u}-\mathbf{1}_{N}\otimes \boldsymbol{\theta}^{\ast}+\mathbf{v}\right)^\top\mathbf{L}\otimes\mathbf{I}\left(\mathbf{u}-\mathbf{1}_{N}\otimes \boldsymbol{\theta}^{\ast}+\mathbf{v}\right)\\
        &=\mathbf{v}^\top\mathbf{L}\otimes\mathbf{I}\,\mathbf{v}\geq \lambda_2(\mathbf{L}\otimes\mathbf{I})\|\mathbf{v}\|^2=\lambda_2(\mathbf{L})\|\mathbf{v}\|^2.
    \end{align*}
    Let $k>0$ be arbitrarily chosen. If $\|\mathbf{v}\|\leq k \|\mathbf{u}-\mathbf{1}_{N}\otimes \boldsymbol{\theta}^{\ast}\|$, then we have that
    \begin{align*}
        \left(\mathbf{x}-\mathbf{1}_{N}\otimes \boldsymbol{\theta}^{\ast}\right)^\top&\left(\mathbf{L}\otimes\mathbf{I}+\mathbf{H}^\top\mathbf{H}\right)\left(\mathbf{x}-\mathbf{1}_{N}\otimes \boldsymbol{\theta}^{\ast}\right)\\&\geq \lambda_{\mathrm{H}}\|\hat{\mathbf{u}}-\boldsymbol{\theta}^\star\|^2-2S_{\mathrm{H}}\|\hat{\mathbf{u}}-\boldsymbol{\theta}^\star\|\|\mathbf{v}\|+\lambda_2(\mathbf{L})\|\mathbf{v}\|^2\\
        &\geq \left( \frac{\lambda_{\mathrm{H}}}{N}-\frac{2S_{\mathrm{H}}}{\sqrt{N}}k\right)\|\mathbf{u}-\mathbf{1}_{N}\otimes \boldsymbol{\theta}^{\ast}\|^2+\lambda_2(\mathbf{L})\|\mathbf{v}\|^2\\
        &\geq \min\{ \frac{\lambda_{\mathrm{H}}}{N}-\frac{2S_{\mathrm{H}}}{\sqrt{N}}k,\lambda_2(\mathbf{L})\}\|\mathbf{x}-\mathbf{1}_{N}\otimes \boldsymbol{\theta}^{\ast}\|^2,
    \end{align*}
     where in the last inequality we used the fact that $\|\mathbf{x}-\mathbf{1}_{N}\otimes \boldsymbol{\theta}^{\ast}\|^2=\|\mathbf{u}-\mathbf{1}_{N}\otimes \boldsymbol{\theta}^{\ast}\|^2+\|\mathbf{v}\|^2.$ If $\|\mathbf{v}\|\geq k \|\mathbf{u}-\mathbf{1}_{N}\otimes \boldsymbol{\theta}^{\ast}\|,$ then
     
     \begin{align*}
         \left(\mathbf{x}-\mathbf{1}_{N}\otimes \boldsymbol{\theta}^{\ast}\right)^\top&\left(\mathbf{L}\otimes\mathbf{I}+\mathbf{H}^\top\mathbf{H}\right)\left(\mathbf{x}-\mathbf{1}_{N}\otimes \boldsymbol{\theta}^{\ast}\right)\geq 0 +\lambda_2(\mathbf{L})\|\mathbf{v}\|^2\\
         &\geq\frac{\lambda_2(\mathbf{L})}{1+\frac{1}{k^2}}\|\mathbf{v}\|^2+\frac{\lambda_2(\mathbf{L})}{1+\frac{1}{k^2}}\|\mathbf{u}-\mathbf{1}_{N}\otimes \boldsymbol{\theta}^{\ast}\|^2\\
         &\geq \frac{\lambda_2(\mathbf{L})}{1+\frac{1}{k^2}}\|\mathbf{x}-\mathbf{1}_{N}\otimes \boldsymbol{\theta}^{\ast}\|^2.
     \end{align*}
     
     Therefore, regardless of vector $\mathbf{v},$ we have that 
     \begin{align*}
         \left(\mathbf{x}-\mathbf{1}_{N}\otimes \boldsymbol{\theta}^{\ast}\right)^\top&\left(\mathbf{L}\otimes\mathbf{I}+\mathbf{H}^\top\mathbf{H}\right)\left(\mathbf{x}-\mathbf{1}_{N}\otimes \boldsymbol{\theta}^{\ast}\right)\\&\geq \min\left\{ \frac{\lambda_{\mathrm{H}}}{N}-\frac{2S_{\mathrm{H}}}{\sqrt{N}}k,\frac{\lambda_2(\mathbf{L})}{1+\frac{1}{k^2}}\right\}\|\mathbf{x}-\mathbf{1}_{N}\otimes \boldsymbol{\theta}^{\ast}\|^2.
     \end{align*}
     Following the same idea, we get that
     \begin{align}
         &\left(\mathbf{x}-\mathbf{1}_{N}\otimes \boldsymbol{\theta}^{\ast}\right)^\top\left(    
         \frac{\varphi_c'(0)G_{\mathrm{c}}}{4g_t}\mathbf{L}\otimes\mathbf{I}+
         \frac{\varphi_o'(0)G_o}{2\|\mathbf{H}\|g_t'} \mathbf{H}^\top\mathbf{H}\right)\left(\mathbf{x}-\mathbf{1}_{N}\otimes \boldsymbol{\theta}^{\ast}\right)\nonumber\\&\geq\min\left\{\frac{\varphi_o'(0)G_o}{2\|\mathbf{H}\|g_t'} \left(\frac{\lambda_{\mathrm{H}}}{N}-\frac{2S_{\mathrm{H}}}{\sqrt{N}}k\right),\frac{b\,\varphi_c'(0)G_{\mathrm{c}}}{4ag_t}\frac{\lambda_2(\mathbf{L})}{1+\frac{1}{k^2}}\right\}\|\mathbf{x}-\mathbf{1}_{N}\otimes \boldsymbol{\theta}^{\ast}\|^2.\label{eqn:boundMSE}
     \end{align}
    \end{proof}
    Finally, to prove Theorem~\ref{theorem-MSE}, we make use of the following Lemma from~\cite{gardientHTnoise} (see Theorem 5.2 in~\cite{gardientHTnoise}).
    \begin{lemma}\label{lemma:sequence}
        Let $z^t$ be a nonnegative (deterministic) sequence satisfying
        \begin{align*}
            z^{t+1}\leq(1-r_1^t)z^t+r_2^t,
        \end{align*}
        for all $t\geq t',$ for some $t'>0,$ with some $z^{t'}\geq0.$ Here, $\{r_1^t\}$ and $\{r_2^t\}$ are deterministic sequences with $\frac{a_1}{t+1} \leq r_1^t\leq1$ and $r_2^t\leq\frac{a_2}{(t+1)^{\delta}},$ with $a_1,a_2>0,$ and $\delta>0.$ Then, the following holds: (1) $z^t=O(\frac{1}{t^{\delta-1}})$ provided that $a_1>\delta-1;$ (2) if $a_1\leq\delta-1,$ them $z^t=O(\frac{1}{t^s}),$ for any $s<a_1.$
    \end{lemma}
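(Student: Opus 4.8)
The plan is to prove Lemma~\ref{lemma:sequence} by induction on the ansatz $z^t\le C\,(t+1)^{-p}$, choosing the exponent $p$ according to the case and fixing the constant $C>0$ together with a threshold $t'$ only at the end. Concretely, I take $p=\delta-1$ for part~(1) and $p=s$ for an arbitrary fixed $s<a_1$ for part~(2). The base case is essentially free: since $z^{t'}\ge 0$ is a fixed finite number, the bound $z^{t'}\le C\,(t'+1)^{-p}$ holds as soon as $C\ge z^{t'}(t'+1)^{p}$, which is merely a lower bound on $C$ and is therefore compatible with the (also lower-bound) requirements that the inductive step will place on $C$.

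For the inductive step, suppose $z^t\le C\,(t+1)^{-p}$. Using $r_1^t\ge a_1/(t+1)$ together with $z^t\ge 0$, and $r_2^t\le a_2/(t+1)^{\delta}$, the recursion yields
\begin{align*}
z^{t+1}\le\frac{C}{(t+1)^{p}}-\frac{a_1C}{(t+1)^{p+1}}+\frac{a_2}{(t+1)^{\delta}}.
\end{align*}
I would then compare the right-hand side with the target $C\,(t+2)^{-p}$ through the Taylor expansion (with Lagrange remainder) of $x\mapsto x^{-p}$,
\begin{align*}
\frac{C}{(t+2)^{p}}=\frac{C}{(t+1)^{p}}-\frac{Cp}{(t+1)^{p+1}}+R_t,\qquad R_t=O\!\big((t+1)^{-p-2}\big),
\end{align*}
which is valid for every real $p$. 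After cancelling the common term $C\,(t+1)^{-p}$, the inductive step reduces to the single scalar inequality
\begin{align*}
-\frac{C(a_1-p)}{(t+1)^{p+1}}+\frac{a_2}{(t+1)^{\delta}}\le R_t .
\end{align*}

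It then remains to verify this scalar inequality in each regime. In part~(1), $p=\delta-1$ makes the two left-hand terms share the exponent $p+1=\delta$, giving $\big(a_2-C(a_1-p)\big)(t+1)^{-(p+1)}$; since $a_1>\delta-1$ one may pick $C$ with $C(a_1-\delta+1)>a_2$, so this left-hand side is negative of order $(t+1)^{-(p+1)}$ while $R_t$ is of smaller order $(t+1)^{-(p+2)}$, and the inequality holds for all $t$ past some $t'$, yielding $z^t=O(t^{-(\delta-1)})$. In part~(2), $p=s<a_1\le\delta-1$ forces $\delta>s+1$, so the dominant left-hand term $-C(a_1-s)(t+1)^{-(s+1)}$ is negative of order $(t+1)^{-(s+1)}$, whereas both the forcing $a_2(t+1)^{-\delta}$ and the remainder $R_t$ are of strictly smaller order; hence the inequality again holds for all large $t$, giving $z^t=O(t^{-s})$ for every $s<a_1$.

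The main obstacle is the bookkeeping that makes a single pair $(C,t')$ work throughout: the remainder $R_t$ must be dominated uniformly by the negative leading term, which requires fixing $t'$ large (depending on $p,a_1,a_2$) before fixing $C$, and then checking that enlarging $C$ to satisfy the base case does not spoil the remainder domination (both $R_t$ and the gap $C(a_1-p)-a_2$ scale favourably with $C$, so this goes through). Keeping the signs straight is also delicate because $p=\delta-1$ may be negative when $\delta<1$, so that $(t+1)^{-p}$ is increasing; the argument nevertheless holds verbatim, since it is the Taylor expansion and the ordering of the exponents $p+1$, $\delta$, $p+2$ that drive the estimates, not the sign of $p$. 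As an alternative to the discrete induction, one could compare $z^t$ with the solution of the continuous relaxation $\dot z(\tau)=-(a_1/\tau)z(\tau)+a_2\tau^{-\delta}$, whose explicit solution exhibits exactly the two regimes; but the self-contained induction above avoids the extra approximation step.
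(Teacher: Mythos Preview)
Your induction argument is correct, and the bookkeeping you outline (first enlarging the starting index so that $1-a_1/(t+1)\ge 0$ and $|p(p+1)|/(2(t+1))\le (a_1-p)/2$, then choosing $C$ large enough to absorb both the base case and the forcing term) indeed closes cleanly in both regimes. The only point worth making explicit is that when you move from the given starting index $t'$ to a larger index $t''$, finiteness of $z^{t''}$ follows from the crude bound $z^{t+1}\le z^t+r_2^t$ applied finitely many times; you implicitly use this but do not say it.

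As for comparison with the paper: the paper does not prove this lemma at all. It is quoted verbatim from another reference (Theorem~5.2 in~\cite{gardientHTnoise}) and used as a black box in the proof of Theorem~\ref{theorem-MSE}. Your self-contained induction therefore supplies strictly more than the paper does here. The approach you take---an ansatz $z^t\le C(t+1)^{-p}$ with a one-step Taylor comparison---is the standard way such deterministic recursions are handled in the stochastic approximation literature, so it is presumably close in spirit to what the cited reference does, but the paper itself offers nothing to compare against.
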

	
	We are finally ready to finalize the proof 
	of Theorem~\ref{theorem-MSE}.
     \begin{proof} (Proof of Theorem~\ref{theorem-MSE})
         From equations~\eqref{eqn:boundMSE} and~\eqref{eqn:boundT1T2} we get that
         \begin{align*}
             (\mathbf{x}-&\mathbf{1}_{N}\otimes \boldsymbol{\theta}^{\ast})^\top\mathbf{r}(\mathbf{x}^t)\\&\leq -\min\left\{\frac{\varphi_o'(0)G_o}{2\|\mathbf{H}\|g_t'} \left(\frac{\lambda_{\mathrm{H}}}{N}-\frac{2S_{\mathrm{H}}}{\sqrt{N}}k\right),\frac{b\,\varphi_c'(0)G_{\mathrm{c}}}{4ag_t}\frac{\lambda_2(\mathbf{L})}{1+\frac{1}{k^2}}\right\}\|\mathbf{x}-\mathbf{1}_{N}\otimes \boldsymbol{\theta}^{\ast}\|^2.
         \end{align*}
         Therefore, taking the expectation in~\eqref{eq:expfilt}, we have that 
         \begin{align*}
             \mathbb{E}[V(\mathbf{x^{t+1}})]\leq \left(1 - \frac{a_1}{t+1}\right) \mathbb{E}[V(\mathbf{x^{t}})]+\frac{a_2}{(1+t)^{2\delta}},
         \end{align*}
         where 
         \begin{align*}
             a_1=\min &\left\{ \frac{\varphi_o'(0)G_o a (1-\delta)\left(\lambda_{\mathrm{H}}-2S_{\mathrm{H}}\sqrt{N}k\right)}{\|\mathbf{H}\| N\left(\|\mathbf{x}^0-\mathbf{1}_{N}\otimes \boldsymbol{\theta}^{\ast}\|+b\,\sqrt{MN} d\, c_{\mathrm{c}}+a\, \|\mathbf{H}\|\sqrt{N}c_{\mathrm{o}}\right)},\right.\\&\left.
         \frac{b\,\varphi_c'(0)G_{\mathrm{c}}(1-\delta)\lambda_2(\mathbf{L})k^2}{2(k^2+1)\left(\|\mathbf{x}^0\|+b\,\sqrt{MN} d\, c_{\mathrm{c}}+a\, \|\mathbf{H}\|\sqrt{N}c_{\mathrm{o}}\right)}\right\}
         \end{align*}
         and $a_2=a^2c'.$ Therefore, using the Lemma~\ref{lemma:sequence}, $\hat{\delta}$ is any positive number such that 
		\begin{align*}
		    \hat{\delta}<\min\left\{2\delta-1, \frac{\varphi_o'(0)G_{\mathrm{o}} a (1-\delta)\left(\lambda_{\mathrm{H}}-2S_{\mathrm{H}}\sqrt{N}k\right)}{\|\mathbf{H}\| N\left(\|\mathbf{x}^0-\mathbf{1}_{N}\otimes \boldsymbol{\theta}^{\ast}\|+b\,\sqrt{MN} d\, c_{\mathrm{c}}+a\, \|\mathbf{H}\|\sqrt{N}c_{\mathrm{o}}\right)},\right.\\ \left.
         \frac{b\,\varphi_c'(0)G_{\mathrm{c}}(1-\delta)\lambda_2(\mathbf{L})k^2}{2(k^2+1)\left(\|\mathbf{x}^0\|+b\,\sqrt{MN} d\, c_{\mathrm{c}}+a\, \|\mathbf{H}\|\sqrt{N}c_{\mathrm{o}}\right)}\right\}.
		\end{align*}
         Therefore, using Lemma~\ref{lemma:sequence} we obtain MSE convergence with rate $O(1/t^{\hat{\delta}}).$
     \end{proof}
     \begin{remark}
     Even though, we see that the convergence factor $\hat{\delta}$ depends on the system parameters, i.e., on the network and sensing model and also on the innovation and consensus nonlinearities, it is easy to see that $\hat{\delta}\in(0,1)$ regardless of the system parameters. 
     {Recall that Theorem~\ref{theorem-asymptotic-normality} shows that the proposed estimator~\eqref{eq:alg2} obtains rate $1/t$ in the \emph{weak convergence sense}, while Theorem~\ref{theorem-MSE} shows that  \eqref{eq:alg2} obtains a slower convergence rate, but in the sense of the \emph{mean squared convergence}. Note that this is not a contradiction, and Theorem~\ref{theorem-MSE} adds information with respect to Theorem~\ref{theorem-asymptotic-normality}. Namely, it is well known that mean squared convergence implies convergence in distribution; therefore, with the same assumptions as in Theorem~\ref{theorem-MSE}, the convergence rate ${1}/{t^{\hat{\delta}}}$ is also attainable for convergence in distribution. In contrast, from Theorem~\ref{theorem-asymptotic-normality},  we can not conclude that the rate of the mean squared convergence is also ${1}/{t}.$}
     \end{remark}
     \begin{remark}\label{remark:meanestimator}
     {
     In fact, we next show that, in the presence of the heavy-tailed observation noise considered here, the MSE convergence rate cannot be as fast as $1/t$, for any estimator (even not for centralized ones). In this sense, 
     the fact that quantity $\hat{\delta}$ is strictly smaller than one is not a consequence of loose bounds, but it is rather due to the intrinsic difficulty of the estimation problem. To be specific, we consider here the special case where each agent $i$ observes a scalar parameter $\theta^\star\in\mathbb{R}$ according to
	\begin{align}\label{eq:meanestimator}
		z_i(t)=\theta^\star + n_i^t,
	\end{align}
    where $n_i^t$ satisfies Assumption~\ref{as:observationnoise}. In this case, the proposed estimator~\eqref{eq:alg2} can be viewed as a mean estimator of the probability density function $p_\mathrm{o}(u-\theta^\star)$. Let us denote by $\mathcal{P}$ the class of all probability density functions $p_\mathrm{o}(u-\theta^\star)$ such that $p_\mathrm{o}$ is the pdf of the observation noise that satisfies Assumption~\ref{as:observationnoise}, for any $\theta^\star\in\mathbb{R}$. Extending the results from~\cite{Sub_Gaussian} (see Appendix G), we prove that, for any $\theta^\star\in\mathbb{R}$, and for any mean estimator $\hat{\theta}_t$, the following holds:
\begin{align}\label{eq:subgassianestimator}
    \sup\limits_t \sup\limits_{p\in\mathcal{P}} tN \mathbb{E}[|\hat{\theta}_t-\theta^\star|^2]=+\infty.
\end{align} 
On the other hand, Theorem~\ref{theorem-MSE} shows that, with the proposed distributed estimator~\eqref{eq:alg2}, the following holds:
\begin{align*}
    \sup\limits_t \sup\limits_{p\in\mathcal{P}} (tN)^{\hat{\delta}}  \mathbb{E}[|\hat{\theta}_t-\theta^\star|^2]<+\infty,
\end{align*}
for some $\hat{\delta}\in(\frac{1}{2},1)$\footnote{{Notice that in the centralized case, the observations are collected in batches of fixed size $N$. That is, after $t$ time steps, there are $Nt$ observations. Henceforth, we include  quantity $N$ in~\eqref{eq:subgassianestimator} for a precise statement. Note that, since $N$ is  constant and the supremum is taken with respect to $t$, the inclusion of $N$ is not necessary.}}
}
     \end{remark}

     \begin{remark}\label{remark:regressors}
         \noindent{Theorems~\ref{theorem-almost-surely}, \ref{theorem-asymptotic-normality} and~\ref{theorem-MSE} continue to hold even if the linear transformation vectors $\mathbf{h}_i$ in \eqref{eq:obs_model} are no longer static (see Appendix H). That is, we can allow that each agent $i$ at each time $t=0,1,...,$ makes the observation by:
	\begin{align}\label{eq:obsregressor}
	    z_i^t=(\mathbf{h}_i^t)^\top \boldsymbol{\theta}^\star+n_i^t.
	\end{align}
	Here, for each agent $i$, for each time step $t$, the linear transformation vector $\mathbf{h}_i^t$ is a random variable that satisfies the following assumptions.}
{	\begin{enumerate}
	    \item For each agent $i$ and each time step $t=0,1,...,$, the linear transformation vector is given by $\mathbf{h}_i^t=\overline{\mathbf{h}}_i+\widetilde{\mathbf{h}}_i^t,$
	    where the vector $\overline{\mathbf{h}}_i\in\mathbf{R}^M$ is deterministic, and vector $\widetilde{\mathbf{h}}_i^t\in\mathbf{R}^M$ is a random vector;
        \item The sequence of vectors $\{[\widetilde{\mathbf{h}}_1^t,\widetilde{\mathbf{h}}_2^t,...,\widetilde{\mathbf{h}}_N^t]\}$ is i.i.d., with finite second moment, and it is independent of the sequences $\mathbf{n}^t$ and $\boldsymbol{\xi}_{ij}^t$ for $\{i,j\}\in E$;
        \item At each agent $i=1,...,N$~at each time~$t=0,1,...,$, each entry $\ell=1,2,...M$ $[\widetilde{\mathbf{h}}_i^t]_\ell$ has the same probability density function~$p_\mathrm{h}$;
        \item The pdf $p_\mathrm{h}$ is symmetric, i.e. $p_\mathrm{h}(u)=p_\mathrm{h}(-u),$ for every $u\in\mathbb{R}$ and $p_\mathrm{h}(u)>0$ for $|u|\leq c_\mathrm{h}$, for some constant $c_\mathrm{h}>0$;
	    \item The matrix $\sum_{i=1}^{N}\overline{\mathbf{h}}_i \left( \overline{\mathbf{h}}_i\right)^\top$ is invertible.
	\end{enumerate}}
     \end{remark}

	\section{Analytical and numerical examples}	\label{section:examples}
	
	In this section we provide analytical and numerical examples that illustrate results from Section~\ref{section-theoresults}.
	
	\noindent\textbf{Example 1:} We consider the network where each agent $i$ observes a scalar parameter $\theta^\star\in\mathbb{R}$ following the linear regression model:
	\begin{align}\label{eqn:linearRegresionscalar}
		z_i(t)=h\theta^\star + n_i^t,
	\end{align}
	where $h\neq0$ and $n_i(t)$ is zero mean and i.i.d. in time and across agents. For simplicity, we assume that the underlying graph of the network is regular, with degree $d$. We assume that there is no communication noise between agents, i.e., $\boldsymbol{\xi}_{ij}\equiv0$ for $(i,j)\in E_d.$ We additionally assume that the nonlinearity on the consensus part $\Psi_{\mathrm{c}}$ in~\eqref{eq:alg2} is the identity function and the nonlinearity on the innovation part is $\Psi_{\mathrm{o}}(w)=B\tanh(w/B)$, for $B>0$. Therefore, algorithm~\eqref{eq:alg2} is now given by:
	\begin{align}\label{eq:algExp1}
		x_{i}^{t+1}={x}_{i}^{t}-\alpha_{t}\left(\frac{b}{a}\sum_{j\in\Omega_{i}}
		\left( {x}_{i}^{t}-{x}_{j}^{t} 
		\right)-{h}\Psi_\mathrm{o}\left(z_{i}^{t}-{h}{x}_{i}^{t}\right)\right),
	\end{align}
	for each agent $i$ and each time $t$. From Theorem~\ref{theorem-asymptotic-normality}, we have that the asymptotic covariance matrix is given by~\eqref{eqn-asympt-var} 
	and matrix $\mathbf{S}_0$ is now given by $\mathbf{S}_0=\sigma_\mathrm{o}^2 h^2 \mathbf{I}$ and $\sigma_\mathrm{o}^2=\int |\Psi_\mathrm{o}(w)|^2 d \Phi_\mathrm{o}(w)$ is the effective observation noise.
	Following the same procedure as in~\cite{KMR,Ourwork}, for $\boldsymbol{\Sigma}=\frac{1}{2}\mathbf{I}-a\varphi_{\mathrm{o}}'(0)h^2\mathbf{I}$, we have that the average per-agent asymptotic variance, denoted by $\sigma_B^2=\frac{1}{N}\Tr(\mathbf{S}),$ is equal to $\sigma_B^2=\frac{a^2\sigma_\mathrm{o}^2h^2}{2ah^2\varphi_{\mathrm{o}}'(0)-1},$
	for $a>\frac{1}{2h^2\varphi_{\mathrm{o}}'(0)}$ (see Appendix F). Therefore, we need to change the constant $a$ when changing $B$, i.e., we define $a=a(B)=\frac{1}{2h^2\varphi_{\mathrm{o}}'(0)(B)}+\epsilon$\footnote{$\epsilon$ is added since we need to have that $a>\frac{1}{2h^2\varphi_{\mathrm{o}}'(0)}$.}, for some constant $\epsilon>0$, we rewrite $\sigma_B^2$ as follows (see Appendix F), $\sigma_{{B}}^2=\frac{(1+2h^2\varphi_{\mathrm{o}}'(0)\epsilon)^2\sigma_\mathrm{o}^2({B})}{8h^4\varphi_{\mathrm{o}}'(0)^3\epsilon}.$
	For the nonlinearity $\Psi_{\mathrm{o}}$ that is considered here, we have that 
    $\sigma_\mathrm{o}^2=\int\limits_{-\infty}^{+\infty}B^2\tanh^2\left(\frac{w}{B}\right)f(w)dw,$ and $\varphi_{\mathrm{o}}'(0)=\int\limits_{-\infty}^{+\infty} \Psi'(w)f(w)dw=\int\limits_{-\infty}^{+\infty}\frac{1}{\cosh^2(\frac{w}{B})}f(w)dw.$
	Notice that both functions $\sigma_\mathrm{o}^2$ and $\varphi_{\mathrm{o}}'(0)$ are increasing with respect to $B$ (see Appendix F).
	Since we have that $|B^2\tanh^2(\frac{w}{B})f(w)|\leq |w^2f(w)|$ and $|\frac{1}{\cosh^2(\frac{w}{B})}f(w)|\leq |f(w)|$ for all $w\in\mathbb{R}$ and all $B>0,$ using the Lebesgue's dominated convergence theorem, we have that
	$\lim\limits_{B\to 0^+}\sigma_\mathrm{o}^2=0, \lim\limits_{B\to+\infty}\sigma_\mathrm{o}^2=\sigma^2_\eta, \lim\limits_{B\to 0^+}\varphi_{\mathrm{o}}'(0)=0, \lim\limits_{B\to +\infty}\varphi_{\mathrm{o}}'(0)=1,$
	where $\sigma^2_\eta$ is the variance of the observation noise $\eta.$ Therefore, we have that $\sigma_0^2=\lim\limits_{B\to 0^+}\sigma_{{B}}^2=+\infty$ (see Appendix F), and $\sigma_{\infty}^2=\lim\limits_{B\to +\infty}\sigma_{{B}}^2=\frac{(1+2h^2\epsilon)^2\sigma^2_\eta}{8h^4\epsilon}.$
	Suppose now that the variance of the observation noise $\eta$ is infinite, i.e. $\sigma^2_\eta=+\infty$. This means that $\sigma_{\infty}^2=+\infty$. For the continuous function $\sigma_{{B}}^2$, defined for all $B\in(0,+\infty)$, we have that $\lim\limits_{B\to 0^+}\sigma_{{B}}^2=\lim\limits_{B\to +\infty}\sigma_{{B}}^2=+\infty$. Therefore, there exists an optimal $B^\star$ such that $\sigma_{B^\star}^2=\inf\limits_{B\in(0,\infty)} \sigma_B^2$. Note that the case $B\to \infty$ corresponds to a $\mathcal{LU}$ scheme from~\cite{KMR}, while the case $B\to0$ corresponds to each agent working in isolation. Therefore, we show analytically on the simple class of nonlinearities $\Psi_\mathrm{o}$ (hyperbolic tangent), that cooperation through a nonlinear mapping $\Psi_\mathrm{o}$ strictly improves performance with respect to both using linear and non-cooperative schemes.
	
	To numerically illustrate the above results, we now consider a sensor (agents) network with $N=8$ agents, setting that the underlying topology is given by a regular graph with degree $d=3$. The true parameter is $\theta^\ast=1$, the observation parameter is $h=1$, and the observation noise for each agent's measurements has the following pdf 
	\begin{align}\label{eq:htdistribution}
		f(w)=\frac{\beta-1}{2\,(1+|w|)^\beta}, 
	\end{align}
	with $\beta=2.05$, which has an infinite variance. Recall that we assumed that there is no communication noise between agents. We set the consensus parameter as $b=1$ and the innovation parameter as $a=a(0.3)=\frac{1}{2h^2\varphi_{\mathrm{o}}'(0)(0.3)}+0.1.$ Figure~\ref{fig:SigmaBmin} shows the average per-agent asymptotic variance $\sigma_{B}^2$ versus $B$. As it can be seen, optimal $B^\star$ approximately equals $B^\star=0.65.$ Using Monte Carlo simulations, we compare numerically an estimated per-sensor MSE across iterations, for the optimal $B^\star$ and for some sub-optimal choices of $B$. We can see that the algorithm performs better for the optimal value $B^\star$ than for the other considered suboptimal choices of $B$ (see Figure \ref{fig:comparingB}), hence confirming the theory. 
	
	\begin{figure}[tbhp]
		\centering \subfloat[]{\label{fig:SigmaBmin}\includegraphics[height=40mm]{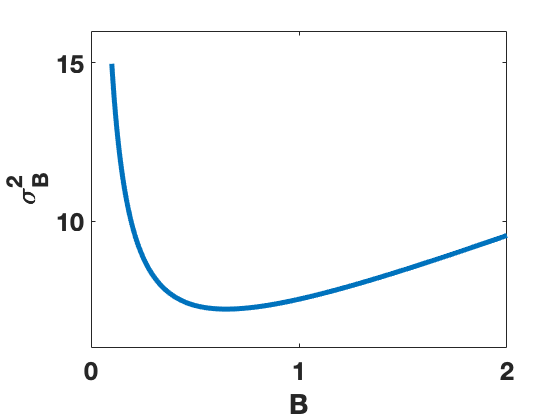}}\hspace{0.3cm} \subfloat[]{\label{fig:comparingB}\includegraphics[height=40mm]{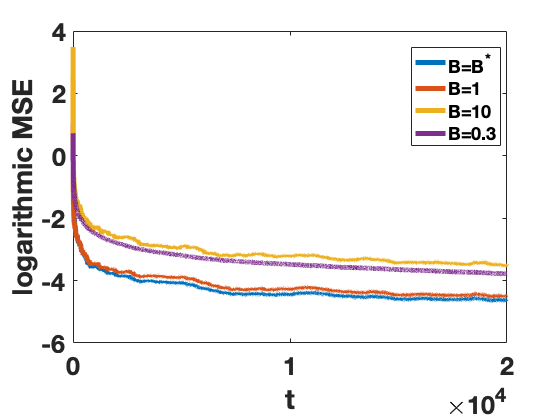}} \hspace{0.3cm} 
		\caption{(a) Average per-agent asymptotic variance $\sigma_B^2$ versus $B$ (b) Monte Carlo-estimated per-sensor MSE error on logarithmic scale for the different choices of~$B$ }
	\end{figure}

	\textbf{Example 2:} In this example we provide analysis in the terms of the average per node variance with respect to the level of the mutual dependence of observation and communication noise. Once more,
	we consider the network where each agent $i$ observes a scalar parameter $\theta^\star\in\mathbb{R}$ following the linear regression model~\eqref{eqn:linearRegresionscalar} and we assume that the underlying graph of the network is regular, with degree $d$.
	As it is said, we now allow observation and communication noise to be mutually dependent. For simplicity, we consider the case when that dependence between communication noise $\xi_{ij}^t$ and observation noise $n_i$ is given by $\xi_{ij}=\rho\, n_i^t +\sqrt{1-\rho^2}\,\hat{n}_i^t,$ at each time $t=0,1,..$ and for all tuples $\{i,j\}\in E,$ where, $\rho\in(-1,1),$ sequence $\{\hat{n}_i^t\}$ is independently identically distributed in time $t$ and across all agents $i.$ Moreover, $n_i^t$ are $\hat{n}_j^s$ mutually independent whenever $(i,t)\neq(j,s).$ Here, it is easy to see that we have strong positive correlation if $\rho\to1$, strong negative correlation if $\rho\to-1$ and we do not have any correlation if $\rho=0.$ Moreover, we set that $\Psi_{\mathrm{o}}(w)=\Psi_{\mathrm{c}}(w)=\sign w,$ and hence, algorithm~\eqref{eq:alg2} is given by
	
	\begin{align}\label{eq:algExp3}
		{x}_{i}^{t+1}={x}_{i}^{t}-\alpha_{t}\left(\frac{b}{a}\sum_{j\in\Omega_{i}}
		\Psi_\mathrm{c}\left( {x}_{i}^{t}-{x}_{j}^{t} 
		+\rho\, n_i^t +\sqrt{1-\rho^2}\,\hat{n}_i^t\right)-{h}\Psi_\mathrm{o}\left({h}(\theta^\star-{x}_{i}^{t})+n_i\right)\right).
	\end{align}
	
	Analogously to the previous example, we have that the average per-agent asymptotic variance $\sigma_{\mathrm{\rho}}^2$ is given by
	\begin{align}
	\sigma_{\mathrm{\rho}}^2=&\frac{b^2\sigma_\mathrm{c}^2d^2+a^2h^2\sigma_\mathrm{o}^2-2abhd\sigma_{\mathrm{oc}}}{N\left(2ah^2\varphi_{\mathrm{o}}'(0)-1\right)}\\&+\frac{b^2\sigma_\mathrm{c}^2d^2+a^2h^2\sigma_\mathrm{o}^2-2abhd\sigma_{\mathrm{oc}}}{N}\sum\limits_{i=2}^N\frac{1}{2b\varphi_{\mathrm{c}}'(0)\lambda_i+2ah^2\varphi_{\mathrm{o}}'(0)-1},
	\end{align}
	since $\mathbf{S}_0=\left(\frac{b^2}{a^2}\sigma_\mathrm{c}^2d^2+\sigma_\mathrm{o}^2 h^2 -2\frac{b}{a}hd\sigma_{\mathrm{oc}} \right)\mathbf{I}$ and  $\boldsymbol{\Sigma}=\frac{1}{2}\mathbf{I}-a\left(\frac{b}{a}\varphi_{\mathrm{c}}'(0)\mathbf{L}+\varphi_{\mathrm{o}}'(0)h^2\mathbf{I}\right).$ Here, regardless of $\rho$ we have that $\sigma_\mathrm{o}^2=\sigma_\mathrm{c}^2=1$ and $\varphi_{\mathrm{o}}'(0)= 2p_{n}(0)$ (see~\cite{Ourwork}).
	On the other hand, $\sigma_{\mathrm{oc}}$ which is effective cross-covariance between the observation and the communication noise after passing through the appropriate nonlinearity and $\varphi_{\mathrm{c}}'(0)$ are functions with respect to $\rho.$ We have that 
	\begin{align}
	    \sigma_{\mathrm{oc}}&=\int\limits_{-\infty}^{\infty}\int\limits_{-\infty}^{\infty}\Psi_{\mathrm{c}}(\rho x+\sqrt{1-\rho^2}y)\Psi_{\mathrm{o}}(x)p_{\hat{n}}(y) p_{n}(x)dxdy\\
	    &=\int\limits_{0}^{+\infty}\int\limits_{\frac{-\rho x}{\sqrt{1-\rho^2}}}^{\infty}p_{\hat{n}}(y)p_n(y)dydx-\int\limits_{0}^{+\infty}\int\limits_{-\infty}^{\frac{-\rho x}{\sqrt{1-\rho^2}}}p_{\hat{n}}(y)p_n(y)dydx\\
	    &-\int\limits_{-\infty}^{0}\int\limits_{\frac{-\rho x}{\sqrt{1-\rho^2}}}^{\infty}p_{\hat{n}}(y)p_n(y)dydx+
	    \int\limits_{-\infty}^{0}\int\limits_{-\infty}^{\frac{-\rho x}{\sqrt{1-\rho^2}}}p_{\hat{n}}(y)p_n(y)dydx,
	\end{align}
	and we see that $\sigma_{\mathrm{oc}}\to0$ as $\rho\to0$, $\sigma_{\mathrm{oc}}\to1$ as $\rho\to1$ and $\sigma_{\mathrm{oc}}\to-1$ as $\rho\to-1$. Moreover, we have that $\varphi_\mathrm{c}'(0)=2\int\limits_{-\infty}^{\infty}p_{\hat{n}}(-\rho x)p_n(\sqrt{1-\rho^2}x)dx,$
	and again, it is easy to see that, $\varphi_\mathrm{c}'(0)\to 2 p_n(0)$ as $\rho\to\pm1$ and $\varphi_\mathrm{c}'(0)\to 2 p_{\hat{n}}(0)$ as $\rho\to0.$
	To demonstrate the above results, again we consider a sensor (agents) network with $N=8$ agents, setting that the underlying topology is given by a regular graph with degree $d=3$. The true parameter is $\theta^\ast=1$, the observation parameter, the innovation parameter and consensus parameter are $h=a=b=1$. We set that for all $i$, $n_i$ and $\hat{n}_i$ have the pdf as in~\eqref{eq:htdistribution}
	with $\beta=2.05$. Figure~\ref{fig:sigmaro} shows $\sigma_\mathrm{\rho}^2$ with respect to $\rho$. As it can be seen, the lowest $\sigma_\mathrm{\rho}^2$ is attained at $\rho=1,$ also $\sigma_\mathrm{\rho}^2$ has two local maxima at $\rho \approx-0.88$ and at $\rho \approx 0.31.$ Figure~\ref{fig:pernodero} shows the comparison of Monte Carlo simulation for $\frac{1}{N}\|\mathbf{x}^t-\mathbf{1}\otimes\theta\|^2 \, t$ for different choices of $\rho.$ Moreover, Figure~\ref{fig:pernodero} justifies the results presented in~\ref{fig:sigmaro}, in the sense that $\frac{1}{N}\|\mathbf{x}^t-\mathbf{1}\otimes\theta\|^2 \, t$ is minimal for $\rho=1$ and maximal for $\rho=-0.88.$ 
	Finally, we note that, while the two local maxima 
	obtained here are specific for the simplistic correlation and sensing model assumed here for analytical tractability, we observe numerically 
	for more general models that the general trend of this example is preserved, in the sense that higher (more positive) correlations lead to a better performance.
	
	\begin{figure}[tbhp]
		\centering \subfloat[]{\label{fig:sigmaro}\includegraphics[height=40mm]{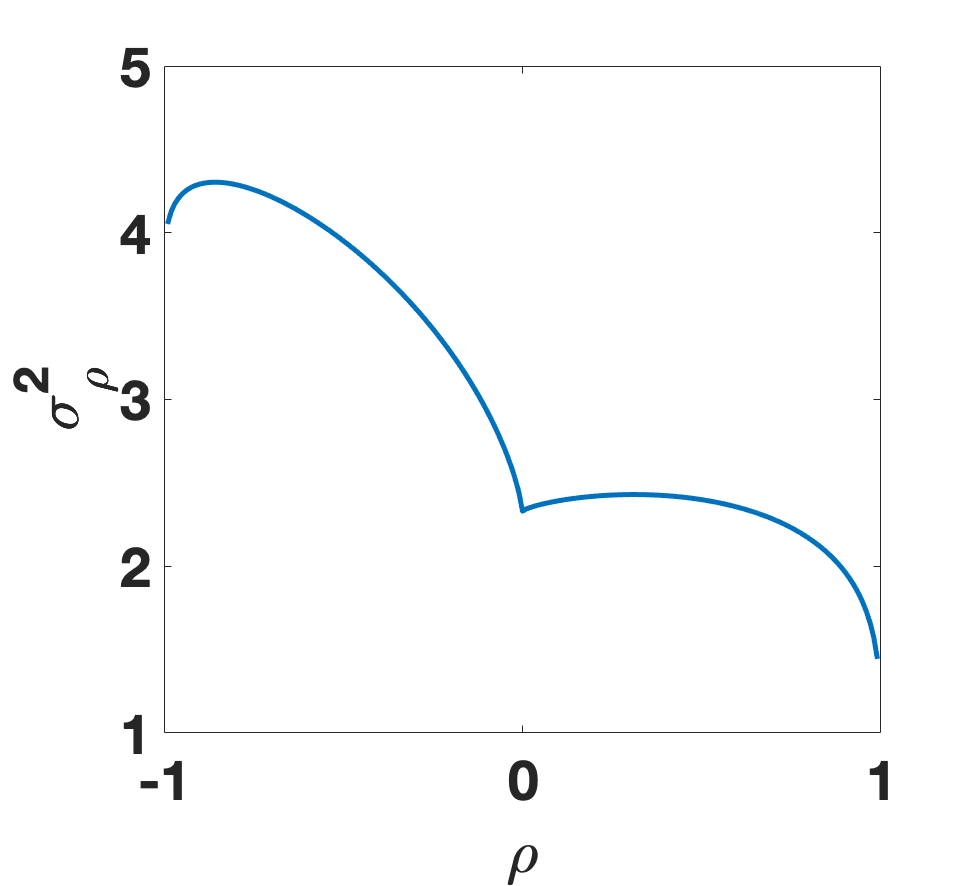}}
		\subfloat[]{\label{fig:pernodero}\includegraphics[height=40mm]{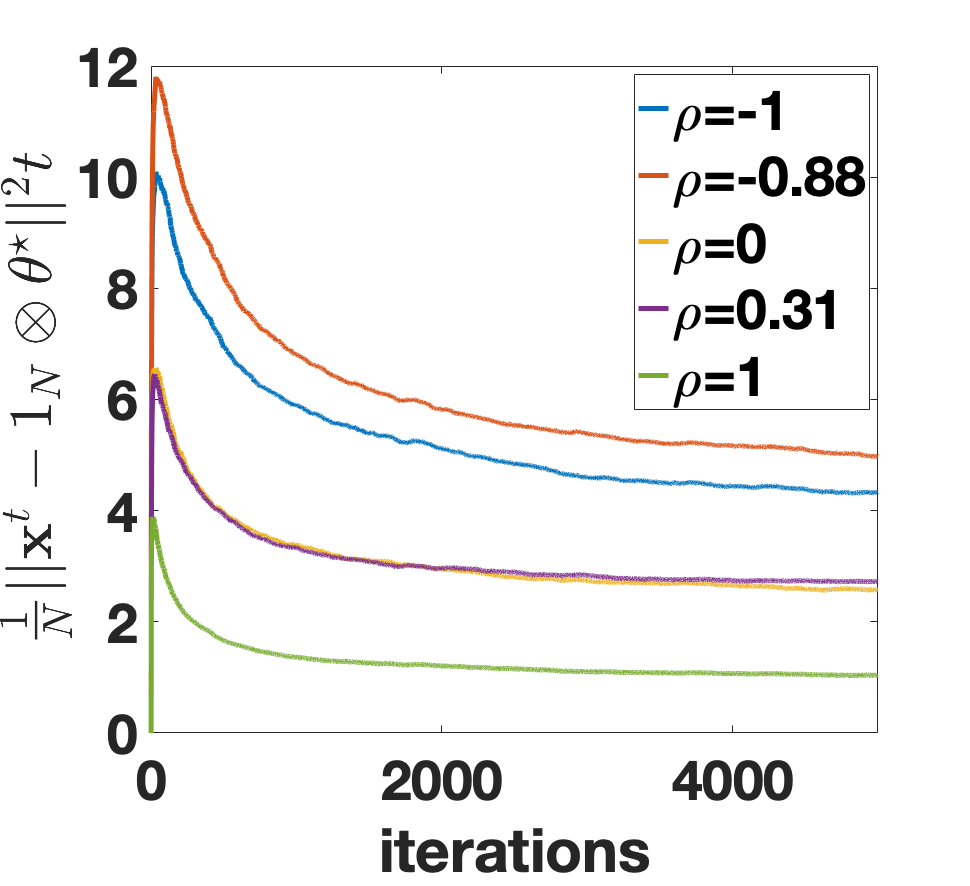}}
		\caption{(a) Average per-agent asymptotic variance $\sigma_B^2$ versus $B$  (b) Monte Carlo-estimation of $\frac{1}{N}\|\mathbf{x}^t-\mathbf{1}_N\otimes\boldsymbol{\theta}^\star\|^2\,t$ for different choices of $\rho$ } 
	\end{figure}

	\subsection{Numerical simulations}\label{subsection:numsim}

	In this subsection, we demonstrate the performance of proposed consensus+innovations estimator in a larger sensor network. We consider a sensor network with $N=40$ agents where the underlying topology is an instance of a random geometric graph;  we used randomly generated true parameter $\boldsymbol{\theta}^\star\in\mathbb{R}^{10}$, whose entries are drawn mutually independently form the uniform distribution on $[-10,10];$ we used randomly generated observation vectors $\mathbf{h}_i\in\mathbb{R}^{10}$, for which the condition $2$ of Assumption~\ref{as:newtworkmodelandobservability} is verified to be true. We set the consensus parameter as $b=1$ and and step-size parameter as $\delta=1.$   First, we compare the proposed consensus+innovations estimator with the method from~\cite{F} and its hypothetical variant in the case when there is no communication noise, but in the presence of heavy-tailed observation noise with pdf as in~\eqref{eq:htdistribution} for $\beta=2.05.$ Here, we used the same algorithm settings and the same nonlinearities for the proposed algorithm as in Example 1, with a slight change, i.e., we set that $B=10$ and $a=0.2$ For method from~\cite{F} and its hypothetical variant (see Appendix E), we set that $B_i=2$, $\phi_{i,1}(x)=x$ and $\phi_{i,2}(x)=\tanh(x)$ for all agents $i$. Furthermore, we set that weighting coefficients are chosen according to $a_{ij}=\frac{\tilde{\mathbf{A}}_{ ij}}{  \sum\limits_{\ell \in \mathcal{N}_i} \tilde{\mathbf{A}}_{\ell i}},$ where $\tilde{\mathbf{A}}=\mathbf{A}+\mathbf{I}.$ Moreover, for the smoothing recursions, zero initial conditions are assumed, $\nu_i$ is set to $0.9$ for every agent $i$ and $\epsilon=10^{-2}.$
    We can see 
	all methods manage to (slowly) decrease MSE over iterations, with the proposed method exhibiting the best performance among the three methods considered. Figure~\ref{fig:ovseovsal} shows Monte Carlo simulation of the MSE for the proposed algorithm, algorithm from~\cite{F} and the algorithm in~\cite{Ourwork}, when communication between agents is also contaminated with heavy-tailed communication noise. Here, for the proposed algorithm we set that both nonlinearities are $\Psi_{\mathrm{o}}(w)=\Psi_{\mathrm{c}}(w)=B\tanh(w/B),$ for $B=10$ and $a=1$. 
	Further, we use the same algorithm setting for the method in~\cite{F} as in the previous simulation example, and we use the same nonlinearity on the consensus part and the same $B$ for algorithm from~\cite{Ourwork} as in the proposed algorithm. 
	We can see that both~\cite{Ourwork} and~\cite{F}  
	here fail to converge, while the proposed method 
	still effectively reduces MSE.
	
	\begin{figure}[tbhp]
		\centering \subfloat[]{\label{fig:has4010}\includegraphics[height=40mm]{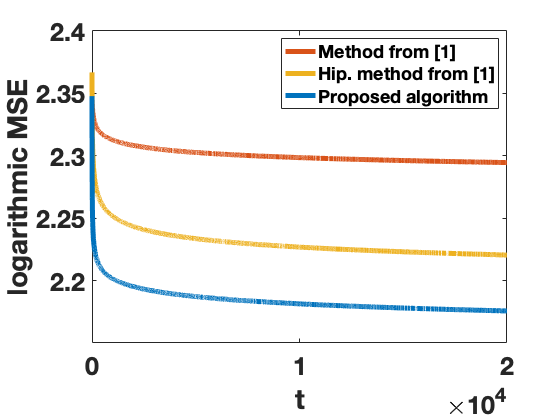}}
		\subfloat[]{\label{fig:ovseovsal}\includegraphics[height=40mm]{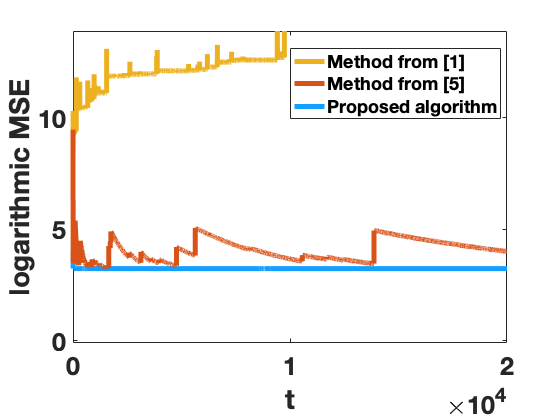}}
		\caption{(a) Monte Carlo-estimated per-sensor MSE error on logarithmic scale for proposed algorithm for $B=10$, method from~\cite{F} and its hypothetical variant (b) Monte Carlo-estimated per-sensor MSE error on logarithmic scale for proposed algorithm, algorithm form~\cite{F} and algorithm from~\cite{Ourwork}}
	\end{figure}
	
	
	We next present the scenario where the observation and communication noises are mutually dependent. To do this, we set that the $i$-th element of the observation noise $\mathbf{n}$ is given by $\mathbf{n}_i=\mathbf{v}_i\exp{(\frac{h}{2}\mathbf{v}_i^2)},$ where $\mathbf{v}$ has standard normal distribution and $h$ is a heavy-tail parameter (see \cite{Lambert}). Moreover, the $\ell$-th element of the communication noise $\boldsymbol{\xi}_{ij}$ is given by $[\boldsymbol{\xi}_{ij}]_\ell=[\mathbf{w}_{ij}]_\ell\exp{(\frac{h}{2}[\mathbf{w}_{ij}]_\ell^2)}$, where $\mathbf{w}_{ij}$ is the linear transformation of $\mathbf{v}$, i.e., $\mathbf{w}_{ij}=\mathbf{W}_{ij}\mathbf{v}$ and $\mathbf{W}_{ij}\in\mathbb{R}^{M\times N}$ is a randomly generated matrix independent of the observation noise. Figure~\ref{fig:dependent} presents Monte Carlo estimates of per-agent MSE across iterations. Figure~\ref{fig:msetD} shows Monte Carlo simulation of quantity  $\frac{1}{N}\|\mathbf{x}^t-\mathbf{1}_N\otimes\boldsymbol{\theta}^\star\|^2\,\sqrt{t}$. For this numerical setting, from the Figure~\ref{fig:msetD}, we can deduce that $E[\|\mathbf{x}^t-\mathbf{1}_N\otimes\boldsymbol{\theta}^\star\|^2]$ decreases at least as fast as $O(\frac{1}{\sqrt{t}}),$ hence confirming our MSE rate theory.

	\begin{figure}[tbhp]
		\centering  
		\subfloat[]{\label{fig:dependent}\includegraphics[height=40mm]{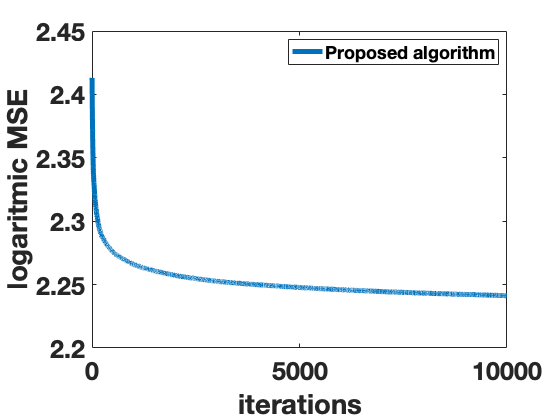}} \hspace{0.3cm} 
		\subfloat[]{\label{fig:msetD}\includegraphics[height=40mm]{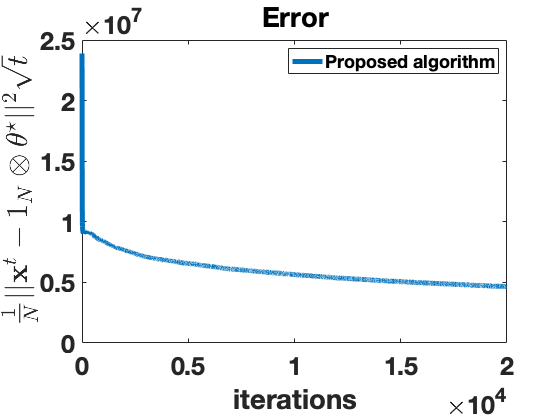}} \hspace{0.3cm} 
		\caption{(a) Monte Carlo-estimated per-sensor MSE error on logarithmic scale for proposed algorithm when link failures can occur for $B=1$ and $h=10$ (b) Monte Carlo-estimation of $\frac{1}{N}\|\mathbf{x}^t-\mathbf{1}_N\otimes\boldsymbol{\theta}^\star\|^2\,\sqrt{t}$ for $B=10$ and $h=2$.}
	\end{figure}
	
	\section{Conclusion} \label{section-conlusion}
	We have studied distributed consensus+innovations estimation under the simultaneous presence of heavy-tailed (infinite variance) correlated sensing and communication noises.
	 This setting is in contrast with existing work that either always assumes a finite-variance sensing noise. We developed a nonlinear estimator and established its almost sure convergence and asymptotic normality. Furthermore, we showed that the estimator achieves a sublinear MSE convergence rate $O(1/t^{\kappa})$, and we 
	 explicitly charaterized the rate $\kappa i\in (0,1)$ in terms of system parameters. 
	 Analytical examples illustrate the role of the nonlinearities incorporated in the method and the effects of noises correlation. Finally, numerical simulations corroborate our findings and demonstrate that the proposed distributed estimator converges  
	 under the simultaneous presence of heavy-tailed (infinite variance) correlated sensing and communication noises, while, for the same setting, existing distributed estimators fail to converge.

	\bibliographystyle{siam}
	\bibliography{references}
	
\newpage
	\section*{Appendix}
	\subsection*{A. Some results on Stochastic approximation}
	We make use of the following standard stochastic approximation result, see~\cite{Nevelson}, see also~\cite{KMR}.
	\begin{theorem}
		\label{theorem-SA}
		Let $\{\mathbf{x}^t\in\mathbb{R}^l\}_{t\geq0}$ be a random sequence:
		\begin{align}\label{eq:SAtheorem}
			\mathbf{x}^{t+1}=\mathbf{x}^t+\alpha_t[\mathbf{r}(\mathbf{x}^t)+\boldsymbol{\gamma}(t+1,\mathbf{x}^t,\omega)],
		\end{align}
		where, $\mathbf{r}(\cdot):\mathbb{R}^l\to\mathbb{R}^l$ is Borel measurable and $\{\boldsymbol{\gamma}(t,\mathbf{x},\omega)\}_{t\geq0, \mathbf{x}\in\mathbb{R}^l}$ is a family of random vectors in $\mathbb{R}^l$, defined on a probability space $(\Omega,\mathcal{F},\mathbb{P})$, and $\omega\in\Omega$ is a canonical element. Let the following sets of assumptions hold:
		\begin{itemize}
			\item[\textbf{B1:}] The function $\boldsymbol{\gamma}(t,\cdot,\cdot):\mathbb{R}^l\times\Omega \to \mathbb{R}$ is $\mathcal{B}^l\otimes\mathcal{F}$ measurable for every $t$; $\mathcal{B}^l$ is the Borel algebra of $\mathbb{R}^l$.
			\item[\textbf{B2:}] There exists a filtration $\{\mathcal{F}_t\}_{t\geq0}$ of $\mathcal{F}$, such that, for each $t$, the family of random vectors $\{\boldsymbol{\gamma}(t,\mathbf{x},\omega)\}_{\mathbf{x}\in\mathbb{R}^l}$ is $\mathcal{F}_t$ measurable, zero-mean and independent of $\mathcal{F}_{t-1}.$
		\end{itemize}
		(If Assumtions B1, B2 hold, $\{\mathbf{x}(t)\}_{t\geq0}$, is Markov.)
		\begin{itemize}
			\item[\textbf{B3:}] There exists a twice continuously differentiable $V(\mathbf{x})$ with bounded second order partial derivatives and a point $\mathbf{x}^\ast\in\mathbb{R}^l$ satisfying
			\begin{align*}
				&V(\mathbf{x}^\ast)=0, V(\mathbf{x})>0, \mathbf{x}\neq\mathbf{x}^\ast, \lim\limits_{||\mathbf{x}||\to\infty}V(\mathbf{x})=\infty,\\
				&\sup\limits_{\epsilon<||\mathbf{x}-\mathbf{x}^\ast||<\frac{1}{\epsilon}}\langle\mathbf{r}(\mathbf{x}),\nabla V(\mathbf{x})\rangle<0, \forall\epsilon>0.
			\end{align*}
			\item[\textbf{B4:}] There exists constants $k_1, k_2>0$, such that,
			\begin{align*}
				||\mathbf{r}(\mathbf{x})||^2+\mathbb{E}[||\boldsymbol{\gamma}(t+1,\mathbf{x},\omega)||^2]\leq k_1(1+V(\mathbf{x}))- k_2\langle\mathbf{r}(\mathbf{x}),\nabla V(\mathbf{x})\rangle
			\end{align*}
			\item[\textbf{B5:}] The weight sequence $\{\alpha(t)\}_{t\geq0}$ satisfies
			\begin{align*}
				\alpha_t>0, \sum\limits_{t\geq0}\alpha_t=\infty, \sum\limits_{t\geq0}\alpha^2_t<\infty.
			\end{align*}
			\item[\textbf{C1:}] The function $\mathbf{r}(\mathbf{x})$ admits the representation
			\begin{align}
				\label{eq:C.1.1}
				\mathbf{r}(\mathbf{x})=\mathbf{B}(\mathbf{x}-\mathbf{x}^\ast)+\boldsymbol{\delta}(\mathbf{x}),
			\end{align}
			where
			\begin{align}\label{eq:C.1.2}
				\lim\limits_{\mathbf{x}\to\mathbf{x}^\ast}\frac{||\boldsymbol{\delta}(\mathbf{x})||}{||\mathbf{x}-\mathbf{x}^\ast||}=0.
			\end{align}
			(Note, in particular, if $\boldsymbol{\delta}(\mathbf{x})\equiv0$ then \eqref{eq:C.1.2} is satisfied.)
			\item[\textbf{C2:}] The weight sequence $\{\alpha_t\}_{t\geq0}$ is of form
			\begin{align}\label{eq:C.2}
				\alpha_t=\frac{a}{t+1}, \forall t\geq0,
			\end{align}
			where $a>0$ is a constant (note that \textbf{C2} implies \textbf{B5}).
			\item[\textbf{C3:}] Let $\mathbf{I}$ be the $l\times l$ identity matrix and $a, \mathbf{B}$ as in \eqref{eq:C.2} and \eqref{eq:C.1.1}, respectively. Then, the matrix $\boldsymbol{\Sigma}=a\mathbf{B}+\frac{1}{2}\mathbf{I}$ is stable.
			\item[\textbf{C4:}] The entries of the matrices, $\forall t\geq0,$ $x\in\mathbb{R}^l,$
			\begin{align*}
				\mathbf{A}(t,\mathbf{x})=\mathbb{E}[\boldsymbol{\gamma}(t,\mathbf{x},\omega)\boldsymbol{\gamma}^\top(t,\mathbf{x},\omega)],
			\end{align*}
			are finite, and the following limit exists:
			\begin{align*}
				\lim\limits_{t\to\infty, \mathbf{x}\to\mathbf{x}^\ast}\mathbf{A}(t,\mathbf{x})=\mathbf{S}_0.
			\end{align*}
			\item[\textbf{C5:}] There exists $\epsilon>0$, such that
			\begin{align*}
				\lim\limits_{R\to \infty}\sup\limits_{||\mathbf{x}-\mathbf{x}^\ast||<\epsilon}\sup\limits_{t\geq0}\int\limits_{||\boldsymbol{\gamma}(t+1,\mathbf{x},\omega)||>R}||\boldsymbol{\gamma}(t+1,\mathbf{x},\omega)||^2dP=0
			\end{align*}
		\end{itemize}
		Let Assumptions B1--B5 hold for $\{\mathbf{x}(t)\}_{t\geq0}$ in \eqref{eq:SAtheorem}. Them, starting from an arbitrary initial state, the Markov process, $\{\mathbf{x}^t\}_{t\geq0}$, converges a.s. to $\mathbf{x}^\ast$. In other words,
		\begin{align*}
			\mathbf{P}[\lim\limits_{t\to\infty}\mathbf{x}^t=\mathbf{x}^\ast]=1.
		\end{align*}
		The normalized process, $\{\sqrt{t}(\mathbf{x}^t-\mathbf{x}^\ast)\}_{t\geq0},$ is asymptotically normal if, besides Assumptions B1--B5, Assumptions C1--C5 are also satisfied. In particular, as $t\to\infty$
		\begin{align}\label{eq:convindisti}
			\sqrt{t}(\mathbf{x}^t-\mathbf{x}^\ast)\Rightarrow\mathcal{N}(\mathbf{0},\mathbf{S}),
		\end{align}
		where $\Rightarrow$ denotes convergence in distribution (weak convergence). Also, asymptotic variance, $\mathbf{S}$, in \eqref{eq:convindisti} is
		\begin{align*}
			\mathbf{S}=a^2\int\limits_{0}^\infty e^{\boldsymbol{\Sigma} v}\mathbf{S}_0e^{\boldsymbol{\Sigma}^\top v}dv
		\end{align*}
	\end{theorem}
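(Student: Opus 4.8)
The statement (Theorem~\ref{theorem-SA}) is a classical Lyapunov-plus-martingale stochastic approximation result, invoked here following \cite{Nevelson} (see also \cite{KMR}); I would prove its two assertions separately along that route. For the almost sure convergence under B1--B5, the idea is to turn $V$ into an almost-supermartingale along the iterates. Since $V$ is twice continuously differentiable with bounded Hessian, a second-order Taylor expansion of $V$ at $\mathbf{x}^t$ along \eqref{eq:SAtheorem} gives
\begin{align*}
V(\mathbf{x}^{t+1}) = V(\mathbf{x}^t) + \alpha_t\langle\nabla V(\mathbf{x}^t),\,\mathbf{r}(\mathbf{x}^t)+\boldsymbol{\gamma}(t+1,\mathbf{x}^t,\omega)\rangle + \tfrac{\alpha_t^2}{2}\,(\mathbf{r}+\boldsymbol{\gamma})^{\top}\nabla^2 V(\tilde{\mathbf{x}}^t)(\mathbf{r}+\boldsymbol{\gamma}).
\end{align*}
Taking $\mathbb{E}[\,\cdot\mid\mathcal{F}_t]$ annihilates the linear noise term, because $\mathbf{x}^t$ is $\mathcal{F}_t$-measurable while $\boldsymbol{\gamma}(t+1,\cdot,\cdot)$ is zero-mean and independent of $\mathcal{F}_{t-1}$ (B1--B2), and B4 bounds the quadratic term; collecting terms yields, for some $c>0$,
\begin{align*}
\mathbb{E}[V(\mathbf{x}^{t+1})\mid\mathcal{F}_t] \le (1+c\,\alpha_t^2)\,V(\mathbf{x}^t) + c\,\alpha_t^2 + \alpha_t\,(1-c\,\alpha_t)\,\langle\nabla V(\mathbf{x}^t),\,\mathbf{r}(\mathbf{x}^t)\rangle.
\end{align*}
For $t$ large the factor $1-c\,\alpha_t$ is positive, and letting $\epsilon\downarrow 0$ in B3 together with continuity shows $\langle\nabla V,\mathbf{r}\rangle\le 0$ everywhere, so the last term is nonpositive; since $\sum\alpha_t^2<\infty$ (B5), the Robbins--Siegmund almost-supermartingale theorem gives that $V(\mathbf{x}^t)$ converges a.s.\ to a finite limit and $\sum_t\alpha_t\,|\langle\nabla V(\mathbf{x}^t),\mathbf{r}(\mathbf{x}^t)\rangle|<\infty$ a.s.

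To conclude $\mathbf{x}^t\to\mathbf{x}^\ast$ I would argue by contradiction: if the a.s.\ limit of $V(\mathbf{x}^t)$ were some $v_\infty>0$, then eventually $\mathbf{x}^t$ stays in a fixed annulus $\{\epsilon<\|\mathbf{x}-\mathbf{x}^\ast\|<1/\epsilon\}$, on which B3 forces $\langle\nabla V,\mathbf{r}\rangle\le-\kappa<0$; combined with $\sum\alpha_t=\infty$ this contradicts the established summability of $\alpha_t\,|\langle\nabla V(\mathbf{x}^t),\mathbf{r}(\mathbf{x}^t)\rangle|$. Hence $v_\infty=0$ and $\mathbf{x}^t\to\mathbf{x}^\ast$ a.s.

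For asymptotic normality, additionally assuming C1--C5 with $\alpha_t=a/(t+1)$, I would linearize around $\mathbf{x}^\ast$: setting $\mathbf{y}^t:=\mathbf{x}^t-\mathbf{x}^\ast$ and using \eqref{eq:C.1.1},
\begin{align*}
\mathbf{y}^{t+1} = (\mathbf{I}+\alpha_t\mathbf{B})\,\mathbf{y}^t + \alpha_t\,\boldsymbol{\delta}(\mathbf{x}^t) + \alpha_t\,\boldsymbol{\gamma}(t+1,\mathbf{x}^t,\omega).
\end{align*}
The homogeneous part is carried by $\prod_{s<t}(\mathbf{I}+\alpha_s\mathbf{B})$, which grows like $t^{a\mathbf{B}}$, so after the rescaling $\mathbf{z}^t:=\sqrt{t}\,\mathbf{y}^t$ the effective generator becomes $\boldsymbol{\Sigma}=a\mathbf{B}+\tfrac{1}{2}\mathbf{I}$, stable by C3, so the initial condition is forgotten. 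The driving sequence $\{\sqrt{t}\,\alpha_t\,\boldsymbol{\gamma}(t+1,\mathbf{x}^t,\omega)\}$ is a martingale-difference triangular array whose conditional covariances converge, by C4 together with $\mathbf{x}^t\to\mathbf{x}^\ast$ a.s., to a scaled $\mathbf{S}_0$, and whose Lindeberg condition is precisely the uniform integrability C5; a martingale central limit theorem then identifies $\lim_t\mathbf{z}^t$ in distribution as the zero-mean Gaussian whose covariance $\mathbf{S}$ is the unique solution (uniqueness from stability of $\boldsymbol{\Sigma}$) of the Lyapunov equation $\boldsymbol{\Sigma}\mathbf{S}+\mathbf{S}\boldsymbol{\Sigma}^{\top}+a^2\mathbf{S}_0=\mathbf{0}$, equivalently $\mathbf{S}=a^2\int_0^\infty e^{\boldsymbol{\Sigma} v}\mathbf{S}_0 e^{\boldsymbol{\Sigma}^{\top} v}\,dv$, which is \eqref{eq:convindisti}. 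The remainder $\sqrt{t}\,\alpha_t\,\boldsymbol{\delta}(\mathbf{x}^t)$ must then be shown negligible: one first establishes the crude mean-square rate $\mathbb{E}\|\mathbf{y}^t\|^2=O(1/t)$ (hence $\|\mathbf{y}^t\|=O(1/\sqrt{t})$ in probability) via a separate Lyapunov recursion using stability of $\mathbf{B}$ near $\mathbf{x}^\ast$ and the growth bound B4, after which \eqref{eq:C.1.2} gives $\sqrt{t}\,\|\boldsymbol{\delta}(\mathbf{x}^t)\| = o(\sqrt{t}\,\|\mathbf{y}^t\|) = o(1)$ in probability; C2 controls the deterministic time factors.

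I expect the asymptotic-normality part, not the a.s.\ part, to be the main obstacle. Two steps are delicate: (i) bootstrapping the bare a.s.\ convergence of the first part into the $O(1/t)$ mean-square rate --- equivalently, tightness of $\sqrt{t}\,\mathbf{y}^t$ --- needed to neutralize the nonlinear residual $\boldsymbol{\delta}$, which requires using stability of $\mathbf{B}$ and B4 quantitatively rather than merely qualitatively; and (ii) running the martingale CLT with a genuinely state-dependent and time-varying conditional covariance, which forces a careful passage from $\mathbf{A}(t,\mathbf{x}^t)$ to $\mathbf{S}_0$ along the random trajectory, together with a careful treatment of the interaction between the time-inhomogeneous linear recursion $\prod_{s<t}(\mathbf{I}+\alpha_s\mathbf{B})$ and the $\sqrt{t}$ normalization. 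By contrast, once the Taylor expansion and B4 are combined, the a.s.\ part is a routine application of Robbins--Siegmund.
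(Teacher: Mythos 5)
There is no proof of this statement in the paper to compare against: Theorem~4 is imported verbatim from the stochastic-approximation literature (\cite{Nevelson}, see also \cite{KMR}) and is used as a black box to prove Theorems~1 and~2. Your outline is therefore not an alternative to the paper's argument but a reconstruction of the classical Nevelson--Khasminskii/Fabian proof, and as a reconstruction it is essentially correct: the Taylor expansion of $V$, cancellation of the linear noise term via B1--B2, the bound on the quadratic term via B4, Robbins--Siegmund, and the annulus contradiction using B3 and $\sum\alpha_t=\infty$ is exactly the standard route to the a.s.\ part; the linearization via C1, the identification of $\boldsymbol{\Sigma}=a\mathbf{B}+\tfrac12\mathbf{I}$ as the effective generator after the $\sqrt{t}$ rescaling, the martingale CLT with C4 supplying the limiting conditional covariance and C5 the Lindeberg condition, and the Lyapunov-equation characterization of $\mathbf{S}$ is the standard route to the normality part. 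The one place your sketch is thinner than the actual classical proof is the bootstrap you flag yourself: C4 and \eqref{eq:C.1.2} only control the noise covariance and the residual $\boldsymbol{\delta}$ \emph{locally} near $\mathbf{x}^\ast$, so the mean-square rate $O(1/t)$ (equivalently, tightness of $\sqrt{t}(\mathbf{x}^t-\mathbf{x}^\ast)$) cannot be obtained by a single global recursion; one must localize on the event that the trajectory has entered and remains in a small neighborhood of $\mathbf{x}^\ast$ (which the a.s.\ part guarantees has probability arbitrarily close to one) and run the quantitative recursion there, using stability of $\mathbf{B}$. This is a known technical step in \cite{Nevelson}, not a gap in the theorem, but a complete write-up would have to carry it out rather than assert it.
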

	\subsection*{B. Additional results on nonlinearity $\varphi$}
	
	We present some properties of the function $\varphi$ defined in~\eqref{eq:phi-def}. As it is stated in~\cite{Ourwork}, we can intuitively see function $\varphi$ as a convolution-like transformation of nonlinearity $\Psi:\mathbb{R}\to\mathbb{R}$, where the convolution is taken with respect to the {probability density} function $p$ of random value $w$. If $w$ is generated by the underlying probability space $(\Omega, \mathcal{F}, \mathbb {P})$, we have that expectation of 
	\begin{align}\label{eq:defv}
		v=\Psi(a+w) - \varphi(a)
	\end{align}
	is equal to zero, i.e., $\mathbb{E}[v]=0$. Here, the expectation is taken with respect to $\mathcal{F}$. Hence, for all $t=0,1,...,$ we have that expectation of both of the sequences $\boldsymbol{\zeta}^t$, $\boldsymbol{\eta}^t$ defined in~\eqref{eq:zeta} is equal to zero, due to the fact that communication noise $\boldsymbol{\xi}^t$ and observation noise $\mathbf{n}^t$, $t=0,1,...,$ are generated by underlying probability space.
	
	\noindent We have following Lemma (see~\cite{PolyakNL}, see also~\cite{Ourwork}).
	\begin{lemma} [\cite{PolyakNL}]
		\label{Lemma-Polyak}
		Consider function $\varphi$ in \eqref{eq:phi-def}, where function $\Psi:\mathbb{R}\to\mathbb{R}$, satisfies Assumption \ref{as:nonlinearity}. Then, the following holds:
		\begin{enumerate}
			\item ${\varphi}$ is odd;
			\item If $|\Psi(\nu)| \leq c_1,$ for any $\nu \in \mathbb R$, then $|{\varphi}(a)| \leq c_1^{\prime},$ for any $a \in \mathbb R$, for some
			$c_1^\prime>0$;
			\item ${\varphi}(a)$ is monotonically nondecreasing;
			\item ${\varphi}(a) > 0,$ for any $ a>0$.
			\item ${\varphi}$ is continuous at zero;
			\item ${\varphi}$ is differentiable at zero, with a strictly positive derivative at zero, equal to:
			\begin{equation}
				\begin{split}
					{\varphi}'(0) &= \sum_{i=1}^s \left(\Psi(\nu_i + 0) - \Psi(\nu_i - 0)\right)p(\nu_i) 
					+ \sum_{i=0}^s\int_{\nu_i}^{\nu_{i+1}}\Psi'(\nu)p(\nu)d\nu ,
				\end{split}
			\end{equation}
			where $\nu_i, i=1,...,s$ are points of discontinuity of $\Psi$ such that $\nu_0 = -\infty$ and $\nu_{s+1} = +\infty$, and we recall that $p(u)$ is the pdf of random variable $w$.
		\end{enumerate}
	\end{lemma}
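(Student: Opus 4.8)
The plan is to establish the six items of Lemma~\ref{Lemma-Polyak} in order; items~1--5 are short consequences of the defining integral~\eqref{eq:phi-def} together with oddness of $\Psi$ and symmetry of $p$, while item~6 is the substantive one and follows the classical jump/continuous decomposition behind~\cite{PolyakNL}. For item~1, substitute $w\mapsto -w$ in~\eqref{eq:phi-def} and use $p(-w)=p(w)$ and $\Psi(-u)=-\Psi(u)$ to get $\varphi(-a)=-\varphi(a)$; in particular $\varphi(0)=0$. Item~2 is immediate from $|\varphi(a)|\le\int|\Psi(a+w)|p(w)\,dw\le c_1\int p\,dw=c_1$, so $c_1'=c_1$ works. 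Item~3 holds because for $a_1<a_2$ the integrand $\Psi(a_2+w)-\Psi(a_1+w)$ is nonnegative by monotonicity of $\Psi$ and $p\ge0$.

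For item~4, fold the integral about the origin using symmetry of $p$: $\varphi(a)=\int_0^\infty\bigl(\Psi(a+w)+\Psi(a-w)\bigr)p(w)\,dw$. For $a>0$ the bracket is strictly positive on $0<w\le a$ (both arguments nonnegative, positive by part~2 of Assumption~\ref{as:nonlinearity}), while for $w>a$ it equals $\Psi(a+w)-\Psi(w-a)\ge0$ because $0<w-a<w+a$ and $\Psi$ is nondecreasing; since $p>0$ on a subinterval of $(0,a]$, this forces $\varphi(a)>0$. Item~5 (continuity at zero): as $a\to0$, $\Psi(a+w)\to\Psi(w)$ for a.e.\ $w$ (the discontinuity set of $\Psi$ is null), the integrand is dominated by $c_1 p(w)\in L^1$, so by dominated convergence $\varphi(a)\to\int\Psi(w)p(w)\,dw=0=\varphi(0)$.

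The core is item~6. Since $\Psi$ is bounded and nondecreasing it has at most countably many jumps; let $\nu_1,\dots,\nu_s$ be those that matter, with heights $J_i=\Psi(\nu_i+0)-\Psi(\nu_i-0)\ge0$, and split $\Psi=\Psi_{\mathrm J}+\Psi_{\mathrm C}$, where $\Psi_{\mathrm J}(u)=\sum_i J_i\,\mathbf 1_{\{u>\nu_i\}}$ is the pure jump part and $\Psi_{\mathrm C}$ is continuous and piecewise $C^1$ with $\Psi_{\mathrm C}'=\Psi'$ off the $\nu_i$; correspondingly $\varphi=\varphi_{\mathrm J}+\varphi_{\mathrm C}$. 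For the jump part, $\varphi_{\mathrm J}(a)=\sum_i J_i\int\mathbf 1_{\{a+w>\nu_i\}}p(w)\,dw=\sum_i J_i\bigl(1-P(\nu_i-a)\bigr)$ with $P$ the CDF of $p$, so $\varphi_{\mathrm J}'(0)=\sum_i J_i\,p(\nu_i)$. For the continuous part, write $\Psi_{\mathrm C}(w+a)-\Psi_{\mathrm C}(w)=\int_0^a\Psi_{\mathrm C}'(w+s)\,ds$ and apply Tonelli to obtain $\tfrac1a\bigl(\varphi_{\mathrm C}(a)-\varphi_{\mathrm C}(0)\bigr)=\tfrac1a\int_0^a\bigl(\int\Psi_{\mathrm C}'(w+s)p(w)\,dw\bigr)ds$; letting $a\to0^+$ (a Lebesgue-point argument, using $\Psi_{\mathrm C}'\ge0$ and $\Psi_{\mathrm C}'\in L^1$) yields $\int\Psi'(w)p(w)\,dw=\sum_{i=0}^{s}\int_{\nu_i}^{\nu_{i+1}}\Psi'(w)p(w)\,dw$ with the convention $\nu_0=-\infty$, $\nu_{s+1}=+\infty$. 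Summing the two contributions gives the claimed formula, the two-sided derivative being reconciled by oddness. Strict positivity then follows since every term is $\ge0$ and part~6 of Assumption~\ref{as:nonlinearity} produces one strictly positive term: if $\Psi$ jumps at $0$ then $0=\nu_{i_0}$, $J_{i_0}>0$, and $p(0)>0$ (positivity of $p$ near zero from Assumptions~\ref{as:observationnoise}/\ref{as:communicationnoise}), so $\varphi'(0)\ge J_{i_0}p(0)>0$; otherwise $\Psi$ is continuous at $0$ and strictly increasing on $(-c_2,c_2)$, so $0$ lies in some piece $(\nu_i,\nu_{i+1})$ and $\int_{-\epsilon}^{\epsilon}\Psi'(w)\,dw=\Psi(\epsilon)-\Psi(-\epsilon)>0$ for small $\epsilon$, hence $\Psi'>0$ on a positive-measure set where $p>0$, giving $\varphi'(0)>0$.

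The main obstacle I anticipate is the rigorous interchange of differentiation and integration in item~6 under only piecewise differentiability of $\Psi$ and the minimal regularity imposed on $p$ (only positivity near zero). The jump part requires $\tfrac{d}{da}\bigl[1-P(\nu_i-a)\bigr]\big|_{a=0}=p(\nu_i)$, which is valid at Lebesgue points of $p$ (hence a.e.) and, at the single relevant point $0$ when $\Psi$ jumps there, is guaranteed by the customary additional assumption that $p$ is continuous near zero; the continuous part requires the average of $s\mapsto\int\Psi_{\mathrm C}'(w+s)p(w)\,dw$ over $[0,a]$ to converge to its value at $s=0$, which must be argued through monotonicity/absolute continuity of $\Psi_{\mathrm C}$ rather than a crude pointwise bound. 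These are exactly the delicate points addressed in~\cite{PolyakNL}, so I would carry out items~1--5 directly and invoke that reference for the fine part of item~6.
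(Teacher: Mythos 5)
The paper does not actually prove this lemma: it is stated in Appendix~B and imported wholesale from~\cite{PolyakNL} (see also~\cite{Ourwork}), so there is no in-paper argument to compare against. Your reconstruction is correct and follows the standard route one would expect behind that citation. Items~1--5 are handled exactly as they should be (the folding trick for item~4 and the dominated-convergence argument for item~5 are the natural proofs given only symmetry of $p$ and the null discontinuity set of $\Psi$), and the jump/continuous decomposition for item~6, with $\varphi_{\mathrm J}'(0)=\sum_i J_i\,p(\nu_i)$ via the CDF and the Tonelli/Lebesgue-point argument for the absolutely continuous part, is the classical derivation; the observation that oddness of $\varphi$ makes the left and right difference quotients at zero coincide is a clean way to get the two-sided derivative from a one-sided limit. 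The two technical caveats you flag yourself are the genuine ones: differentiating $1-P(\nu_i-a)$ at $a=0$ needs $\nu_i$ to be a Lebesgue point of $p$ (automatic a.e., but at a jump of $\Psi$ located at $0$ it is supplied by the assumed positivity and implicit regularity of $p$ near zero), and in the strict-positivity argument the step from $\Psi(\epsilon)-\Psi(-\epsilon)>0$ to $\int_{-\epsilon}^{\epsilon}\Psi'>0$ uses that the monotone continuous part has no singular component, which is the intended reading of ``piecewise differentiable'' in Assumption~\ref{as:nonlinearity}. Since the paper defers exactly these fine points to~\cite{PolyakNL}, your proposal is, if anything, more explicit than what the paper offers, and deferring the residual regularity issues to that reference is consistent with the paper's own treatment.
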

	
	\noindent From Lemma~\ref{Lemma-Polyak}, we have that $\varphi(a)=0$ if and only if $a=0$. Moreover, there exists a function $\delta:\mathbb{R}\to\mathbb{R}$, which is continuous in the vicinity of zero, such that 
	\begin{align}\label{eq:defdelta}
		\varphi(a)=\varphi(0)+\varphi^\prime(0)a+\delta(a)=\varphi^\prime(0)a+\delta(a),
	\end{align}
	and $\lim\limits_{a\to0}\frac{\delta(a)}{a}=0.$
	
	\noindent We now prove boundedness of the function $\mathbf{r}(\cdot)$ in equation~\eqref{eq:boundedr}. If condition 2 of Lemma~\ref{Lemma-Polyak} is satisfied for both functions $\varphi_\mathrm{c}$ and $\varphi_\mathrm{o}$, then the right hand side of ~\eqref{eq:boundedr} would be lesser or equal to some positive constant $c$, which would led to $\left\|\mathbf{r}(\mathbf{x})\right\|^2 \leq c_1 (1+V(\mathbf{x})).$ Suppose now that condition 3 of Lemma~\ref{Lemma-Polyak} is satisfied for the function $\varphi_\mathrm{c}$, then there exists some positive constant $c_1$ such that 
	\begin{align*}
		\left\| \frac{b}{a}\mathbf{L}_{\boldsymbol{\varphi}_\mathrm{c}}(\mathbf{x}-\mathbf{1}_{N}\otimes \boldsymbol{\theta}^{\ast})\right\|^2&=\left(\frac{b}{a}\right)^2\sum\limits_{i=1}^{N}\left\| \sum\limits_{j\in\Omega_{i}}\boldsymbol{\varphi}_\mathrm{c}(\mathbf{x}_i-\mathbf{x}_j)\right\|^2\\&\leq\left(\frac{b}{a}\right)^2\sum\limits_{i=1}^{N} \sum\limits_{j\in\Omega_{i}}\left\| \boldsymbol{\varphi}_\mathrm{c}(\mathbf{x}_i-\mathbf{x}_j)\right\|^2
		\\&\leq \left(\frac{b}{a}\right)^2\sum\limits_{i=1}^{N} \sum\limits_{j\in\Omega_{i}} \left( c \left( 1+ \left\|\mathbf{x}_i-\mathbf{x}_j\right\|^2\right)\right)\\&\leq \left(\frac{b}{a}\right)^2\sum\limits_{i=1}^{N} \sum\limits_{j\in\Omega_{i}} \left( c \left( 1+ \left\|\mathbf{x}_i-\boldsymbol{\theta}^\star \right\|^2+\left\|\mathbf{x}_j-\boldsymbol{\theta}^\star \right\|^2\right)\right)\\
		&\leq c_1 (1+V(\mathbf{x})),
	\end{align*}
	since we have that $\left\|\mathbf{x}_i-\boldsymbol{\theta}^\star \right\|^2\leq ||\mathbf{x}-\mathbf{1}_N\otimes\boldsymbol{\theta}^\ast||^2=V(\mathbf{x})$ for all $i=1,2,...,N.$ If we assume that condition 3 of Lemma~\ref{Lemma-Polyak} is satisfied for the function $\varphi_\mathrm{o}$, we will get that 
	\begin{align*}
		\left\| \mathbf{H}^{\top}\boldsymbol{\varphi}_\mathrm{o}\left(     
		\mathbf{H}\left(\mathbf{x}-\left(\mathbf{1}_{N}\otimes \boldsymbol{\theta}^{\ast}\right)
		\right)\right)\right\|^2&\leq \left\| \mathbf{H}\right\|^2 \left\|\boldsymbol{\varphi}_\mathrm{o}\left(     
		\mathbf{H}\left(\mathbf{x}-\left(\mathbf{1}_{N}\otimes \boldsymbol{\theta}^{\ast}\right)
		\right)\right)\right\|^2\\&\leq \left\| \mathbf{H}\right\|^2  c \left( 1+ \left\|\mathbf{H}\left(\mathbf{x}-\left(\mathbf{1}_{N}\otimes \boldsymbol{\theta}^{\ast}\right)
		\right)\right\|^2\right)\\&\leq  \left\| \mathbf{H}\right\|^2  c \left( 1+ \left\| \mathbf{H}\right\|^2 \left\|\mathbf{x}-\mathbf{1}_{N}\otimes \boldsymbol{\theta}^{\ast}\right\|^2\right).
	\end{align*}
	Therefore, $\left\| \mathbf{H}^{\top}\boldsymbol{\varphi}_\mathrm{o}\left(     
	\mathbf{H}\left(\mathbf{x}-\left(\mathbf{1}_{N}\otimes \boldsymbol{\theta}^{\ast}\right)\leq 
	\right)\right)\right\|^2\leq c_1 (1+V(\mathbf{x})),$ for some positive constant $c_1.$ Hence, inequality in~\eqref{eq:boundedr} is proven.
	
	\noindent Next we prove boundedness of $\mathbb{E}\left[\left\|\boldsymbol{\gamma}(t+1, \mathbf{x}^{t}, \omega)\right\|^2\right]$ in~\eqref{eq:boundedg}. If the function $\Psi$ in~\eqref{eq:phi-def} satisfies condition 5' of Assumption~\ref{as:nonlinearity}, whether
	$w$ in~\eqref{eq:phi-def} has finite or infinite variance, $v$ in~\eqref{eq:defv} is bounded, i.e.,
	\begin{align*}
		|v|^2\leq |\Psi(a+w)|^2 +| \varphi(a)|^2\leq c,
	\end{align*}
	for some positive constant $c$. If the function $\Psi$ in~\eqref{eq:phi-def} satisfies condition 5 of Assumption~\ref{as:nonlinearity} and $w$ has finite variance, we get that variance of $v$ in~\eqref{eq:defv} is bounded with $c\,(1+|a|^2)$ for some positive constant $c$, i.e.,
	\begin{align*}
		\mathbb{E}[|v|^2]&\leq\mathbb{E}[|\Psi(a+w)|^2 +| \varphi(a)|^2]\leq \mathbb{E}[c_1(1+|a+w|^2)+c_1'(1+|a|^2)]\\
		&\leq c_1(1+|a|^2+\mathbb{E}[|w|^2])+c_1'(1+|a|^2)\leq c\,(1+|a|^2),
	\end{align*}
	where $c_1$ and $c_2$ are some positive constants. Thus, whether condition 5 or 5' is satisfied for the function $\Psi$ in~\eqref{eq:phi-def}, variance of $v$ in~\eqref{eq:defv} is bounded with $c\,(1+|a|^2)$ for some positive constant $c$. Hence, we have that for $\boldsymbol{\zeta}^t$, $\boldsymbol{\eta}^t$ defined in~\eqref{eq:zeta}
	\begin{align*}
		\mathbb{E}[\boldsymbol{\zeta}^t]&\leq c'(1+V(\mathbf{x}))\\
		\mathbb{E}[\boldsymbol{\eta}^t]&\leq c''(1+V(\mathbf{x})),
	\end{align*}
	for all $t=0,1,...$, where $c'$ and $c''$ are some positive constants.

	\subsection*{C. Mutually dependent observation noise and mutually dependent communication noise}
	
	In this subsection we relax assumptions on observation and communication noises and show that Theorems~\ref{theorem-almost-surely} and~\ref{theorem-asymptotic-normality} continue to hold. We let Assumptions 1--6 still hold except those which overlap with the following generalizations:
	
	\begin{itemize}
		\item The observation noise $\mathbf{n}^t$ has the joint {probability density function $p_{\mathrm{o}}$} such that:
		\begin{align*}
        \int\limits_{\mathbf{a}\in\mathbb{R}^N}\|\mathbf{a}\|{p_\mathrm{o}(\mathbf{a})d\textbf{a}}<\infty, \quad \int\limits_{\mathbf{a}\in\mathbb{R}^N}\mathbf{a}\,{p_\mathrm{o}(\mathbf{a})d\textbf{a}},
		\end{align*}
		and $p_\mathrm{o}(\mathbf{a})=p_\mathrm{o}(-\mathbf{a})$, for all $\mathbf{a}\in\mathbb{R}^{N}$.
		\item A (possibly) different nonlinear function $\Psi_{\mathrm{o},i}:\mathbb{R}\to\mathbb{R}$ is assigned to each agent $i$. Each function $\Psi_{\mathrm{o},i}$ obeys Assumption \ref{as:nonlinearity}.
		\item The communication noise $\boldsymbol{\xi}_{ij}^t$ has the joint {probability density function $p_{\mathrm{c},ij}$} such that:
		\begin{align*} 
			\int\limits_{\mathbf{a}\in\mathbb{R}^M}\|\mathbf{a}\|{p_{\mathrm{c},ij}(\mathbf{a})d\textbf{a}}<\infty, \quad \int\limits_{\mathbf{a}\in\mathbb{R}^M}\mathbf{a}\,{p_{\mathrm{c},ij}(\mathbf{a})d\textbf{a}}=0,
		\end{align*}
		and {$p_{\mathrm{c},ij}(\mathbf{a})=p_{\mathrm{c},ij}(-\mathbf{a})$}, for all $\mathbf{a}\in\mathbb{R}^{M}$.
		\item A different nonlinear function $\Psi_{\mathrm{c},ij,\ell}:\mathbb{R}\to\mathbb{R}$ is assigned to each arc $(i,j)\in E_d$ and to each element $\ell=1,...,M$ of the communication noise $[\boldsymbol{\xi}_{ij}^t]_\ell$. Each function $\Psi_{\mathrm{c},ij,\ell}$ obeys Assumption \ref{as:nonlinearity}.
	\end{itemize}
	
	This means that observation noises of agents $i$ and $j$ can be mutually dependent. Moreover, the communication noises $\boldsymbol{\xi}_{ij}^t$ may have mutually dependent elements $[\boldsymbol{\xi}_{ij}^t]_\ell$, for $\ell=1,...,M$. Further, here, for simplicity, we assume that observation and communication noises are mutually independent.
	
	\noindent Let us define functions $\varphi_{\mathrm{o},i}:\mathbb{R}\to\mathbb{R}$ for $i=1,2,...,N$ and $\varphi_{ij,\ell}:\mathbb{R}\to\mathbb{R}$ for $(i,j)\in E$ and $\ell=1,2,...,M$ in the same manner as in~\eqref{eq:phi-def}, i.e.,
	
	\begin{align}
		\varphi_{\mathrm{o},i}(a)&=\int \Psi_{\mathrm{o},i}(a+w) {p_{\mathrm{o},i}(w)dw},\label{eqn:phiobs}\\
		\varphi_{\mathrm{c},ij,\ell}(a)&=\int \Psi_{\mathrm{c},ij,\ell}(a+w) {p_{\mathrm{c},ij,\ell}(w)dw}.\label{eqn:phicom}
	\end{align}
	Here, {$p_{\mathrm{o},i}$ and $p_{\mathrm{c},ij,\ell}$ are the marginal probability density functions }of random variables $\mathbf{n}^t_i$ and $[\boldsymbol{\xi}_{ij}^t]_\ell$, respectively. Following same steps as in the proofs of Theorems~\ref{theorem-almost-surely} and~\ref{theorem-asymptotic-normality}, almost sure convergence and asymptotic normality can be shown. In the following, we emphasize only differences. First of all, algorithm~\eqref{eq:algfin} gets replaced by
	
	\begin{align*}
		\mathbf{x}^{t+1}=\mathbf{x}^{t}-\alpha_t\left( \frac{b}{a}\hat{\mathbf{L}}_{\boldsymbol{\varphi}_\mathrm{c}}(\mathbf{x}^t)- \mathbf{H}^{\top}\boldsymbol{\varphi}_\mathrm{o}\left(     
		\mathbf{H}\left(\left(\mathbf{1}_{N}\otimes \boldsymbol{\theta}^{\ast}\right)
		-\mathbf{x}^{t}\right)\right) - \mathbf{H}^{\top}\boldsymbol{\zeta}^t+\frac{b}{a}\boldsymbol{\eta}^t \right).
	\end{align*}
	Now, the map $\hat{\mathbf{L}}_{\boldsymbol{\varphi}_\mathrm{c}}:\mathbb{R}^{MN}\to\mathbb{R}^{MN}$ is 
	\begin{align*}
		\hat{\mathbf{L}}_{\boldsymbol{\varphi}_\mathrm{c}} (\mathbf{x}) =\begin{bmatrix}
			\vdots\\
			\sum\limits_{j\in\Omega_{i}}\boldsymbol{\varphi}_{\mathrm{c},ij}(\mathbf{x}_i-\mathbf{x}_j)\\
			\vdots
		\end{bmatrix},
	\end{align*}
	for any $\mathbf{x}\in\mathbb{R}^{MN},$ where for all $(i,j)\in E,$ function $\boldsymbol{\varphi}_{\mathrm{c},ij}:\mathbb{R}^{M}\to\mathbb{R}^{M}$ is given with $\boldsymbol{\varphi}_{\mathrm{c},ij}(\mathbf{y}_1, \mathbf{y}_2,...,\mathbf{y}_{M})=[\varphi_{\mathrm{c},ij,1}(\mathbf{y}_1), \varphi_{\mathrm{c},ij,2}(\mathbf{y}_2),...,\varphi_{\mathrm{c},ij,M}(\mathbf{y}_{M})]^\top,$ for $\mathbf{y}\in\mathbb{R}^{M},$ functions $\varphi_{\mathrm{c},ij,\ell}(a)$ for $(i,j)\in E$ and $\ell=1,2,...,M$ are given by~\eqref{eqn:phicom}. Moreover, for $\mathbf{y}\in\mathbb{R}^{N},$ the map $\boldsymbol{\varphi}_\mathrm{o}:\mathbb{R}^{N}\to\mathbb{R}^{N}$ is now given with  $\boldsymbol{\varphi}_{\mathrm{o}}(\mathbf{y}_1, \mathbf{y}_2,...,\mathbf{y}_{N})=[\varphi_{\mathrm{o},1}(\mathbf{y}_1), \varphi_{\mathrm{o},2}(\mathbf{y}_2),...,\varphi_{\mathrm{o},N}(\mathbf{y}_{N})]^\top.$ Using the same notation, sequences $\boldsymbol{\zeta}^t\in\mathbb{R}^{N}$ and  $\boldsymbol{\eta}^t\in\mathbb{R}^{MN}$ are appropriate versions of the sequences defined in~\eqref{eq:zeta}. If we define quantities $\hat{\mathbf{r}}(\mathbf{x})$ and $\hat{\boldsymbol{\gamma}}(t+1,\mathbf{x},\omega)$ as follows
	\begin{align}
		\hat{\mathbf{r}}(\mathbf{x})&= -\frac{b}{a}\hat{\mathbf{L}}_{\boldsymbol{\varphi}_\mathrm{c}}(\mathbf{x})- \mathbf{H}^{\top}\boldsymbol{\varphi}_\mathrm{o}\left(     
		\mathbf{H}\left(\mathbf{x}-\left(\mathbf{1}_{N}\otimes \boldsymbol{\theta}^{\ast}\right)
		\right)\right),\label{eqn:rk}\\
		\hat{\boldsymbol{\gamma}}(t+1,\mathbf{x},\omega)&=-\frac{b}{a}\boldsymbol{\eta}^t+\mathbf{H}^{\top}\boldsymbol{\zeta}^t, \label{eqn:gammak}
	\end{align}
	it is easy to see that all conditions B1--B5 and C1--C5 from Theorem~\ref{theorem-SA} still hold (see~\cite{Ourwork}). The only difference occurs in the asymptotic covariance matrix $\mathbf{S}$, i.e., in $\mathbf{S}_0$, which is now given by
	\begin{align*}
		\mathbf{S}_0=\frac{b^2}{a^2}\mathbf{K}_{\boldsymbol{\eta}}+\mathbf{H}^\top \mathbf{K}_{\boldsymbol{\zeta}} \mathbf{H},
	\end{align*}
	where $\mathbf{K}_{\boldsymbol{\eta}}\in\mathbb{R}^{N\times N}$ and $\mathbf{K}_{\boldsymbol{\zeta}}\in\mathbb{R}^{MN\times MN}$ are the effective covariance matrices of communication and observation noises after passing through the appropriate nonlinearities (analogously defined as cross-covariance matrix $\mathbf{K}_\mathrm{c,o}$ in Theorem~\ref{theorem-asymptotic-normality}).

	\subsection*{D. Heavy-tailed noise and identity function}\label{subsection:HTUN}
	In this subsection, we show that the algorithm~\eqref{eq:alg2} does not converge in the presence of heavy-tailed observation and communication noise if at least one of the nonlinearities $\Psi_{\mathrm{o}}$ and $\Psi_{\mathrm{c}}$ is the identity function. This means that in the presence of heavy-tailed observation and communication noises, the algorithms from~\cite{Ourwork,KMR} do not converge, in fact, they exhibit an infinite variance solution sequence.
	
	\begin{theorem}[Infinite variance]
		\label{theorem-infinite-variance}
		For the sequence of iterates $\{\mathbf{x}^t\}, t=1,2,...,$ generated by~\eqref{eq:alg2}, we have that $\mathbb{E}[\|\mathbf{x}^t-\mathbf{1}_N\otimes\boldsymbol{\theta}^\star\|^2]=\infty, t=1,2,...,$ if at least one of the following statements is true.
		\begin{enumerate}
			\item Function $\Psi_{\mathrm{o}}$ is the identity function, i.e., $\Psi_{\mathrm{o}}(a)=a$ and the observation noise has infinite variance, i.e., $\int a^2d\Phi_{\mathrm{o}}=+\infty.$
			\item Function $\Psi_{\mathrm{c}}$ is the identity function, i.e.,  $\Psi_{\mathrm{c}}(a)=a$ and the communication noise has infinite variance, i.e., $\int a^2d\Phi_{\mathrm{c}}=+\infty.$
		\end{enumerate}
	\end{theorem}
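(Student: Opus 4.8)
\noindent\textbf{Proof plan for Theorem~\ref{theorem-infinite-variance}.}
The plan is to exhibit a single agent $i$ for which the $i$-th block $\mathbf{x}_i^{t+1}$ already has infinite second moment for every $t\ge 0$; since $\|\mathbf{x}^{t+1}-\mathbf{1}_N\otimes\boldsymbol{\theta}^\star\|^2\ge\|\mathbf{x}_i^{t+1}-\boldsymbol{\theta}^\star\|^2\ge\tfrac12\|\mathbf{x}_i^{t+1}\|^2-\|\boldsymbol{\theta}^\star\|^2$, this suffices. The engine is the elementary fact that if $\mathbf{X}=\mathbf{A}+\mathbf{B}$ and $\mathcal{G}$ is a sub-$\sigma$-algebra with $\mathbb{E}[\|\mathbf{A}\|^2\mid\mathcal{G}]<\infty$ and $\mathbb{E}[\|\mathbf{B}\|^2\mid\mathcal{G}]=\infty$ almost surely, then $\mathbb{E}[\|\mathbf{X}\|^2\mid\mathcal{G}]=\infty$ a.s.; this is immediate from $\|\mathbf{B}\|^2=\|\mathbf{X}-\mathbf{A}\|^2\le 2\|\mathbf{X}\|^2+2\|\mathbf{A}\|^2$ and monotonicity and linearity of conditional expectation. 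I will use it with $\mathcal{G}=\mathcal{F}_t$, the $\sigma$-algebra generated by $\{\mathbf{n}^s\}_{s<t}$ and $\{\boldsymbol{\xi}^s_{ij}\}_{s<t}$ as in the proof of Theorem~\ref{theorem-almost-surely}; then $\mathbf{x}^t$ is $\mathcal{F}_t$-measurable (the initialization being deterministic), while the time-$t$ noises $\mathbf{n}^t,\{\boldsymbol{\xi}^t_{ij}\}$ are independent of $\mathcal{F}_t$ by the i.i.d.-in-time parts of Assumptions~\ref{as:observationnoise}--\ref{as:communicationnoise}, so that $\mathbb{E}[|n_i^t|^2\mid\mathcal{F}_t]=\mathbb{E}[|n_i^t|^2]$ and $\mathbb{E}\big[\|\textstyle\sum_{j\in\Omega_i}\boldsymbol{\xi}^t_{ij}\|^2\mid\mathcal{F}_t\big]=\mathbb{E}\big[\|\textstyle\sum_{j\in\Omega_i}\boldsymbol{\xi}^t_{ij}\|^2\big]$. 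Conditioning on $\mathcal{F}_t$ only (and not also on the time-$t$ communication noises) is exactly what lets the argument survive the allowed correlation between sensing and communication noises at a fixed time.

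\noindent\emph{Case~1 ($\Psi_{\mathrm{o}}=\mathrm{id}$).} Substituting \eqref{eq:obs_model}, the innovation term equals $\mathbf{h}_i(z_i^t-\mathbf{h}_i^\top\mathbf{x}_i^t)=\mathbf{h}_i\mathbf{h}_i^\top(\boldsymbol{\theta}^\star-\mathbf{x}_i^t)+\mathbf{h}_i n_i^t$, so the block-$i$ update is $\mathbf{x}_i^{t+1}=\mathbf{A}+\mathbf{B}$ with $\mathbf{B}:=\alpha_t\mathbf{h}_i n_i^t$ and $\mathbf{A}:=\mathbf{x}_i^t+\alpha_t\mathbf{h}_i\mathbf{h}_i^\top(\boldsymbol{\theta}^\star-\mathbf{x}_i^t)-\alpha_t\tfrac{b}{a}\sum_{j\in\Omega_i}\boldsymbol{\Psi}_{\mathrm{c}}(\mathbf{x}_i^t-\mathbf{x}_j^t+\boldsymbol{\xi}^t_{ij})$. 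Since $\mathbf{h}_i\ne\mathbf{0}$ and $\int a^2\,d\Phi_{\mathrm{o}}=\infty$, we get $\mathbb{E}[\|\mathbf{B}\|^2\mid\mathcal{F}_t]=\alpha_t^2\|\mathbf{h}_i\|^2\mathbb{E}[|n_i^t|^2]=\infty$ a.s. Given $\mathcal{F}_t$, the first two terms of $\mathbf{A}$ are a fixed vector and the consensus contribution has finite conditional second moment ($\boldsymbol{\Psi}_{\mathrm{c}}$ is bounded if it obeys Assumption~\ref{as:nonlinearity}; and if $\Psi_{\mathrm{c}}$ is also the identity then $\sum_j\boldsymbol{\xi}^t_{ij}$ has finite second moment whenever the communication noise does, while if it does not, statement~2 holds and Case~2 below applies), so $\mathbb{E}[\|\mathbf{A}\|^2\mid\mathcal{F}_t]<\infty$ a.s. The elementary fact gives $\mathbb{E}[\|\mathbf{x}_i^{t+1}\|^2\mid\mathcal{F}_t]=\infty$ a.s., hence $\mathbb{E}[\|\mathbf{x}_i^{t+1}\|^2]=\infty$.

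\noindent\emph{Case~2 ($\Psi_{\mathrm{c}}=\mathrm{id}$).} Now the consensus term is $\tfrac{b}{a}\sum_{j\in\Omega_i}(\mathbf{x}_i^t-\mathbf{x}_j^t)+\tfrac{b}{a}\sum_{j\in\Omega_i}\boldsymbol{\xi}^t_{ij}$, so $\mathbf{x}_i^{t+1}=\mathbf{A}+\mathbf{B}$ with $\mathbf{B}:=-\alpha_t\tfrac{b}{a}\sum_{j\in\Omega_i}\boldsymbol{\xi}^t_{ij}$ and $\mathbf{A}$ the remaining terms (an $\mathcal{F}_t$-measurable vector plus $\alpha_t\mathbf{h}_i\Psi_{\mathrm{o}}(\cdot)$, bounded when $\Psi_{\mathrm{o}}$ obeys Assumption~\ref{as:nonlinearity}; if $\Psi_{\mathrm{o}}$ is also the identity with infinite-variance observation noise, statement~1 holds and Case~1 applies), so $\mathbb{E}[\|\mathbf{A}\|^2\mid\mathcal{F}_t]<\infty$ a.s. Pick $i$ with $\Omega_i\ne\emptyset$ (possible since the communication graph is connected, as used e.g.\ in Lemma~\ref{lemma:pdmatrix}). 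It remains to show $\mathbb{E}\big[\|\sum_{j\in\Omega_i}\boldsymbol{\xi}^t_{ij}\|^2\big]=\infty$. Fix a neighbour $j_0\in\Omega_i$ and a coordinate $\ell\in\{1,\dots,M\}$ and write the $\ell$-th entry of $\sum_{j}\boldsymbol{\xi}^t_{ij}$ as $X+Y$ with $X:=[\boldsymbol{\xi}^t_{ij_0}]_\ell$ and $Y:=\sum_{j\in\Omega_i,\,j\ne j_0}[\boldsymbol{\xi}^t_{ij}]_\ell$; by independence across arcs in Assumption~\ref{as:communicationnoise}, $X\perp Y$, and $\mathbb{E}[X^2]=\int a^2\,d\Phi_{\mathrm{c}}=\infty$. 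By the substitution (freezing) rule for conditional expectations under independence, $\mathbb{E}[(X+Y)^2\mid Y=y]=\mathbb{E}[(X+y)^2]$, and for every real $y$ one has $\mathbb{E}[(X+y)^2]\ge\tfrac14\mathbb{E}[X^2\mathbf{1}_{\{|X|>2|y|\}}]=\infty$; hence $\mathbb{E}[(X+Y)^2]=\infty$, so $\mathbb{E}\big[\|\sum_j\boldsymbol{\xi}^t_{ij}\|^2\big]\ge\mathbb{E}[(X+Y)^2]=\infty$ and $\mathbb{E}[\|\mathbf{B}\|^2\mid\mathcal{F}_t]=\infty$ a.s. Again the elementary fact yields $\mathbb{E}[\|\mathbf{x}_i^{t+1}\|^2]=\infty$.

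\noindent Since both arguments apply for every $t\ge 0$, we conclude $\mathbb{E}[\|\mathbf{x}^t-\mathbf{1}_N\otimes\boldsymbol{\theta}^\star\|^2]=\infty$ for all $t\ge 1$. The two points I expect to need the most care in a full write-up are (i) handling the correlation between $\mathbf{n}^t$ and $\{\boldsymbol{\xi}^t_{ij}\}$, which rules out the naive ``freeze the consensus noise and treat $n_i^t$ as independent of it'' shortcut and forces conditioning on $\mathcal{F}_t$ alone; and (ii) establishing $\mathbb{E}\|\sum_{j\in\Omega_i}\boldsymbol{\xi}^t_{ij}\|^2=\infty$ when $|\Omega_i|\ge 2$, where one cannot simply add variances since the summands need not lie in $L^2$ — this is precisely what the freezing/substitution step above resolves. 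Everything else is routine algebra with the observation model \eqref{eq:obs_model} and the boundedness of whichever nonlinearity is not set to the identity.
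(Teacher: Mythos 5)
Your proof is correct, and it follows a genuinely different and in fact more complete route than the paper's. The paper's proof begins by declaring ``for simplicity'' that when statement~1 holds the communication noise is identically zero, and vice versa; it then expands $\|\mathbf{e}^{t+1}\|^2$ as a square, drops the nonnegative term $\|\mathbf{F}^t(\mathbf{e}^t)\|^2$, and kills the cross term using independence of $\mathbf{e}^t$ and $\mathbf{n}^t$ together with the zero mean of the noise (which tacitly needs an induction showing $\mathbb{E}\|\mathbf{e}^t\|<\infty$ so that the cross term is integrable); statement~2 is then delegated entirely to Appendix~B of~\cite{Ourwork}. You instead keep both noises present and possibly correlated, condition on $\mathcal{F}_t$, and use the triangle-inequality fact ($\|\mathbf{B}\|^2\le 2\|\mathbf{X}\|^2+2\|\mathbf{A}\|^2$) to transfer infinite conditional second moment from the noise term to the iterate; this sidesteps the cross term, the integrability induction, and any independence requirement between the observation and communication noises at a fixed time. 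Your Case~2 also supplies a step the paper never writes out: that $\sum_{j\in\Omega_i}\boldsymbol{\xi}^t_{ij}$ has infinite second moment even though variances of non-$L^2$ summands cannot simply be added, which you resolve with the freezing argument $\mathbb{E}[(X+y)^2]\ge\tfrac14\mathbb{E}[X^2\mathbf{1}_{\{|X|>2|y|\}}]=\infty$. The net effect is that you prove the theorem as literally stated (with both noises active and the allowed cross-correlation), whereas the paper proves a simplified surrogate; the only cost is a slightly longer case analysis for the situation where both nonlinearities are identities, which you handle correctly by routing to the applicable case. The one cosmetic caveat is your appeal to connectivity to find an agent with $\Omega_i\neq\emptyset$: connectivity is only implicit in the paper (via $\lambda_2(\mathbf{L})>0$), but for statement~2 to be non-vacuous the edge set must be nonempty anyway, so picking an endpoint of any arc suffices.
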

	\begin{proof}
		For simplicity, we assume that if statement~1 holds there is no communication noise, i.e. $\boldsymbol{\xi}_{ij}\equiv0$ for all $(i,j)\in E_d$ , and \textit{vice versa},  if statement~2 holds we assume that there is no observation noise, i.e., $\mathbf{n}\equiv0$. If statement~1 holds, in the absence of communication noise, the algorithm~\eqref{eq:algcomp} can be written as
		\begin{align*}
			\mathbf{x}^{t+1}&=\mathbf{x}^{t}-\alpha_t\left( \frac{b}{a}\mathbf{L}_{\boldsymbol{\Psi}_\mathrm{c}}(\mathbf{x})- \mathbf{H}^{\top}\left(\mathbf{z}^{t}-\mathbf{H}\mathbf{x}^{t}\right)\right)\\
			&=\mathbf{x}^{t}-\alpha_t\left( \frac{b}{a}\mathbf{L}_{\boldsymbol{\Psi}_\mathrm{c}}(\mathbf{x})- \mathbf{H}^{\top}\left(\mathbf{H}(\mathbf{1}\otimes\boldsymbol{\theta}^\star)+\mathbf{n}^t-\mathbf{H}\mathbf{x}^{t}\right)\right).
		\end{align*}
		If we define $\mathbf{e}^t=\mathbf{x}^t-\mathbf{1}_N\otimes\boldsymbol{\theta}^\star$, $t=1,2,...,$ we have that $\mathbf{e}^{t+1}=\mathbf{F}^t(\mathbf{e}^t)+\alpha_t\mathbf{H}^\top \mathbf{n}^t,$
		where function $\mathbf{F}^t:\mathbb{R}^{MN}\to\mathbb{R}^{MN}$ is given by 
		$\mathbf{F}^t( \mathbf{y})=(\mathbf{I} + \alpha_t\mathbf{H}^\top\mathbf{H})\mathbf{y} -\alpha_t\frac{b}{a}\mathbf{L}_{\boldsymbol{\Psi}_\mathrm{c}}(\mathbf{y}),$ for $\mathbf{y}\in\mathbb{R}^{MN}.$ Therefore, we have that
		\begin{align*}
			\|\mathbf{e}^{t+1}\|^2&=\|\mathbf{F}^t(\mathbf{e}^t)\|^2+2\alpha_t (\mathbf{F}^t(\mathbf{e}^t))^\top \mathbf{H}^\top \mathbf{n}^t + \alpha_t^2\|\mathbf{H}^\top \mathbf{n}^t\|^2\\
			&\geq 2\alpha_t (\mathbf{H}\,\mathbf{F}^t(\mathbf{e}^t))^\top \mathbf{n}^t + \alpha_t^2\|\mathbf{H}^\top \mathbf{n}^t\|^2,
		\end{align*}
		and using the fact that $\mathbf{e}^t$ and $\mathbf{n}^t$ are independent, we have that 
		\begin{align*}
			\mathbb{E}[\|\mathbf{e}^{t+1}\|^2]\geq \alpha_t^2 \mathbb{E}[ \|\mathbf{H}^\top \mathbf{n}^t\|^2]=\infty,
		\end{align*}
		which completes the proof of statement~1.
		Proof of statement~2 follows directly from Appendix~B in~\cite{Ourwork}.     
	\end{proof}
	
	\subsection*{E. Hypothetical variant of algorithm from~\cite{F}}
	Firstly, we give an overview of algorithm that is proposed in~\cite{F}, for more information see~\cite{F}. They considered a network of $N$ agents where each agent $i=1,2,...,N$ at each time $t\geq0$ collects a linear transformation of unknown vector parameter $\mathbf{w}^0\in\mathbb{R}^M$ corrupted by noise as follows
	\begin{align*}
		d_i(t)=\mathbf{u}_{i,t} \mathbf{w}^0+ v_i(t),
	\end{align*}
	where $\mathbf{u}_{i,t}\in\mathbb{R}^M$ is a row regression vector and $v_i(t)\in\mathbb{R}$ is wide-sense stationary zero-mean impulsive noise process with variance $\sigma_{v,i}^2$. They introduced an agent-dependent and time-varying error nonlinearity, $h_{i,t}(e_i(t)),$ into the adaptation step and proposed following algorithm
	\begin{equation}\label{eq:algAliSayed}
		\begin{split}
			\boldsymbol{\psi}_{i,t}&=\mathbf{w}_{i,t-1}+\mu_i \mathbf{u}_{i,t}^\top h_{i,t}(e_i(t)),\\
			\mathbf{w}_{i,t}&=\sum\limits_{\ell \in \mathcal{N}_i}a_{\ell i}\boldsymbol{\psi}_{\ell,t},
		\end{split}
	\end{equation} 
	where $\mu_i$ is a step size parameter, $\mathcal{N}_i$ is the set of agents connected to agent $i$ including himself and $a_{\ell i}$ are weighting coefficients. For the error nonlinearity $h_{i,t}(e_i(t)),$ they set to be a linear combination of $B_i\geq1$ preselected sign-preserving basis functions, i.e., $h_{i,t}(e_i(t))=\boldsymbol{\alpha}_{i,t}^\top \boldsymbol{\varphi}_{i,t}(e_i(t)).$ As it is said in~\cite{F}, if agent $i$ were to run the sand-alone counterpart of the adaptive filter in~\eqref{eq:algAliSayed}, then the optimal nonlinearity that minimizes $i$-th agent MSE is given by $h^{\mathrm{opt}}_{i,t}(x)=-\frac{p_e'(x)}{p_e(x)}$ in terms of the pdf of the error signal. \\
	\noindent Even though the pdf is not available in practice, for the purpose of comparing algorithms in the specific numerical example when we know pdf, we introduce hypothetical variant of algorithm, by finding optimal $\boldsymbol{\alpha}_{i,t}^{\mathrm{opt}},$ for each agent $i$ at each time $t$, i.e., $\boldsymbol{\alpha}_{i,t}^{\mathrm{opt}}=\argmin\limits_{\boldsymbol{\alpha}_{i,t}} \mathbb{E}[h^{\mathrm{opt}}_{i,t}(e_i(t))-h_{i,t}(e_i(t)) ]^2$
	
	\subsection*{F. Derivations and numerical illustrations for Example 1}
	
	Derivation for the average per-agent asymptotic variance $\sigma_{{B}}^2=\frac{1}{N}\Tr(\mathbf{S})$ follows
	\begin{align*}
		\sigma_{{B}}^2&=\frac{1}{N}\Tr(a^2\int\limits_{0}^{+\infty}e^{\Sigma v}\mathbf{S}_0e^{\Sigma v}dv)
		=\frac{1}{N}a^2\sigma_\mathrm{o}^2h^2\int\limits_{0}^{+\infty}\Tr(e^{2\Sigma v}dv)\\
		&=\frac{1}{N}a^2\sigma_\mathrm{o}^2h^2\int\limits_{0}^{+\infty}N e^{(1-2ah^2\varphi_{\mathrm{o}}'(0))v}dv=\frac{a^2\sigma_\mathrm{o}^2h^2}{2ah^2\varphi_{\mathrm{o}}'(0)-1}.
	\end{align*}
	Integral in the last equality converge for $a>\frac{1}{2h^2\varphi_{\mathrm{o}}'(0)}.$\\
	\noindent If $a=a(B)=\frac{1}{2h^2\varphi_{\mathrm{o}}'(0)(B)}+\epsilon,$ for some constant $\epsilon>0$, we have that
	\begin{align*}
		\sigma_{{B}}^2&=\frac{\left(\frac{1}{2h^2\varphi_{\mathrm{o}}'(0)}+\epsilon\right)^2\sigma_\mathrm{o}^2h^2}{2\left(\frac{1}{2h^2\varphi_{\mathrm{o}}'(0)}+\epsilon\right)h^2\varphi_{\mathrm{o}}'(0)-1}=\frac{\left(\frac{1+2h^2\varphi_{\mathrm{o}}'(0)\epsilon}{2h^2\varphi_{\mathrm{o}}'(0)}\right)^2\sigma_\mathrm{o}^2h^2}{2\left(\frac{1}{2h^2\varphi_{\mathrm{o}}'(0)}+\epsilon\right)h^2\varphi_{\mathrm{o}}'(0)-1}\\
		&=\frac{\left(\frac{1+2h^2\varphi_{\mathrm{o}}'(0)\epsilon}{2h^2\varphi_{\mathrm{o}}'(0)}\right)^2\sigma_\mathrm{o}^2h^2}{1+2h^2\varphi_{\mathrm{o}}'(0)\epsilon-1}=\frac{\left(1+2h^2\varphi_{\mathrm{o}}'(0)\epsilon\right)^2\sigma_\mathrm{o}^2}{8h^4\varphi_{\mathrm{o}}'(0)^3\epsilon}.
	\end{align*}
	Next, we validate that $\lim\limits_{B\to0^+}\sigma_{{B}}^2=+\infty.$ It is suffice to show that $\lim\limits_{B\to0^+}\frac{\sigma_\mathrm{o}^2}{\varphi_{\mathrm{o}}'(0)^3}=+\infty,$ since $\sigma_B^2=\frac{\sigma_\mathrm{o}^2}{8h^4\varphi_{\mathrm{o}}'(0)^3\epsilon}+\frac{4h^2\epsilon\sigma_\mathrm{o}^2}{8h^4\varphi_{\mathrm{o}}'(0)^2\epsilon}+\frac{4h^4\epsilon^2\sigma_\mathrm{o}^2}{8h^4\varphi_{\mathrm{o}}'(0)\epsilon}.$ 
	\begin{align*}
		\lim\limits_{B\to0^+}\frac{\sigma_\mathrm{o}^2}{\varphi_{\mathrm{o}}'(0)^3}&=\lim\limits_{B\to0^+}\frac{B^2\int\limits_{-\infty}^{+\infty} \tanh^2(\frac{w}{B})f(w)dw}{\left(\int\limits_{-\infty}^{+\infty}\frac{1}{\cosh^2(\frac{w}{B})}f(w)dw\right)^3}=[\frac{w}{B}=t,dw=dt]\\
		&=\lim\limits_{B\to0^+}\frac{B^2\int\limits_{-\infty}^{+\infty} \tanh^2(\frac{w}{B})f(w)dw}{B^3\left(\int\limits_{-\infty}^{+\infty}\frac{1}{\cosh^2(w)}f(Bw)dw\right)^3}\\&=\lim\limits_{B\to0^+}\frac{\int\limits_{-\infty}^{+\infty} \tanh^2(\frac{w}{B})f(w)dw}{B\left(\int\limits_{-\infty}^{+\infty}\frac{1}{\cosh^2(w)}f(Bw)dw\right)^3}=+\infty,
	\end{align*}
	since $\lim\limits_{B\to0^+}\int\limits_{-\infty}^{+\infty} \tanh^2(\frac{w}{B})f(w)dw=1$ and $\lim\limits_{B\to0^+}=\int\limits_{-\infty}^{+\infty}\frac{1}{\cosh^2(w)}f(Bw)dw<+\infty.$\\
	We now prove that both of  the functions $\sigma_\mathrm{o}^2$ and $\varphi_{\mathrm{o}}'(0)$ are increasing function with respect to $B$. Suppose that $B_1<B_2,$ then we have that
	\begin{align*}
		B_1^2\tanh^2(\frac{w}{B_1})&<B_2^2\tanh^2(\frac{w}{B_2}),\\
		\frac{1}{\cosh^2(\frac{w}{B_1})}&<	\frac{1}{\cosh^2(\frac{w}{B_2})},
	\end{align*}
	for all $w\in\mathbb{R}.$ Moreover, since $f(w)\geq0$ for all $w\in\mathbb{R}$, we have that
	\begin{align*}
		B_1^2\tanh^2(\frac{w}{B_1})f(w)&<B_2^2\tanh^2(\frac{w}{B_2})f(w),\\
		\frac{1}{\cosh^2(\frac{w}{B_1})}f(w)&<	\frac{1}{\cosh^2(\frac{w}{B_2})}f(w),
	\end{align*}
	for all $w\in\mathbb{R}.$ Therefore, we have that
	\begin{align*}
		\sigma_\mathrm{o}^2(B_1) = \int\limits_{-\infty}^{+\infty}  B_1^2\tanh^2(\frac{w}{B_1})f(w)dw &< \int\limits_{-\infty}^{+\infty}  B_2^2\tanh^2(\frac{w}{B_2})f(w)dw =\sigma_\mathrm{o}^2(B_2),\\
		\varphi_{\mathrm{o}}'(0)(B_1)= \int\limits_{-\infty}^{+\infty} \frac{1}{\cosh^2(\frac{w}{B_1})}f(w)dw&<\int\limits_{-\infty}^{+\infty} \frac{1}{\cosh^2(\frac{w}{B_2})}f(w)dw=\varphi_{\mathrm{o}}'(0)(B_2).
	\end{align*}
	\noindent We now compare,  in the presence of heavy-tailed observation noise with pdf as in~\eqref{eq:htdistribution} for $\beta=2.05$, the proposed algorithm~\eqref{eq:algExp1} for the optimal choice of $B^\star$ with the method from~\cite{F} and its hypothetical variant (see Appendix E). For those methods we set that $B_i=2$, $\phi_{i,1}(x)=x$ and $\phi_{i,2}(x)=\tanh(x)$ for all agents. Furthermore, we set that weighting coefficients are chosen according to $a_{ij}=\frac{\tilde{\mathbf{A}}_{ ij}}{  \sum\limits_{\ell \in \mathcal{N}_i} \tilde{\mathbf{A}}_{\ell i}},$ where $\tilde{\mathbf{A}}=\mathbf{A}+\mathbf{I}.$ Moreover, for the smoothing recursions, zero initial conditions are assumed, $\nu_i$ is set to $0.9$ for every agent $i$ and $\epsilon=10^{-2}.$
	
	Figure \ref{fig:alisayedAn} shows Monte Carlo estimation of MSE for step size $\alpha_t=\frac{0.5}{t+1}$ and the Figure \ref{fig:alisayedA1} shows Monte Carlo estimation of MSE for step size $\alpha_t=\frac{1}{t+1}$. As it can be seen, the hypothetical variant of the method from~\cite{F} outperforms the proposed one in both of the scenarios. However, that is because with the hypothetical variant of~\cite{F} we optimize the choice of the nonlinearity for each agent at each time, whereas the proposed algorithm~\eqref{eq:algExp1} is optimized only by average per-agent asymptotic variance. Moreover, we see that the method from~\cite{F} is not as robust as the proposed algorithm~\eqref{eq:algExp1} with respect to the choice of the step size $\alpha_t$ (constant $a$).
	
	\begin{figure}[tbhp]
		\centering \subfloat[]{\label{fig:alisayedAn}\includegraphics[width=50mm]{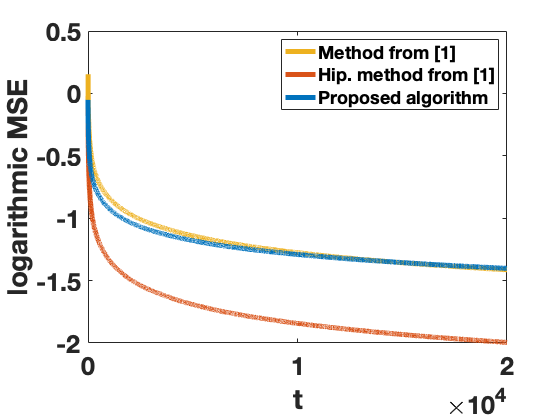}}
		\subfloat[]{\label{fig:alisayedA1}\includegraphics[width=50mm]{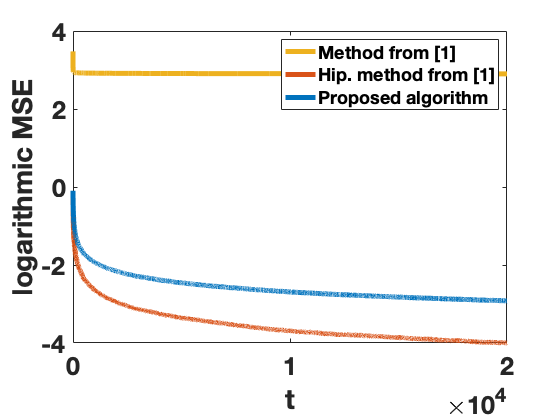}}
		\caption{(a) Monte Carlo-estimated per-sensor MSE error on logarithmic scale for the algorithm~\eqref{eq:algExp1} for optimal $B^\star$ and for algorithm and its hypothetical variant from~\cite{F} for $a=0.5$ (b) Monte Carlo-estimated per-sensor MSE error on logarithmic scale for the algorithm~\eqref{eq:algExp1} for optimal $B^\star$ and for algorithm and its hypothetical variant from~\cite{F} for $a=1$ }
	\end{figure}

  \subsection*{{G. Proof of the assertion in Remark~\ref{remark:meanestimator}}}
    {Here, we modify Theorem 3.1 from~\cite{Sub_Gaussian} and make it applicable to probability density functions that satisfy Assumption~\ref{as:observationnoise}. We will show that 
    \begin{align}\label{eq:subgaussian}
        \sup\limits_{p\in\mathcal{P}^M_{1+\epsilon}} \mathbb{P}\left(|\hat{\theta}_t-\theta^\star|>\left(\frac{8^{\frac{1}{\epsilon}}M^\frac{2}{\epsilon}\ln 2\delta}{t(\ln 2\delta-1)}\right)^{\frac{\epsilon}{1+\epsilon}}\right)\geq \delta,
    \end{align}
    for any $\theta^\star\in\mathbb{R}$, $\delta\in(0,\frac{1}{2})$, where $\mathcal{P}^M_{1+\epsilon}\subseteq\mathcal{P}$ denotes the subclass of all pdfs from $\mathcal{P}$ such that $1+\epsilon$-central moment equals $M$ for $\epsilon\in(0,1).$ Therefore, using Markov inequality, we get
    \begin{align*}
        \sup\limits_{p\in\mathcal{P}^M_{1+\epsilon}} t\mathbb{E}[|\hat{\theta}_t-\theta^\star|^2]\geq c_1 t^{\frac{1-\epsilon}{1+\epsilon}},
    \end{align*}
    for $c_1=\delta \left(\frac{8^{\frac{1}{\epsilon}}M^\frac{2}{\epsilon}\ln 2\delta}{\ln 2\delta-1}\right)^{\frac{2\epsilon}{1+\epsilon}}.$ Using that $\mathcal{P}^M_{1+\epsilon}\subseteq\mathcal{P}$ and taking the supremum with respect to $t$ we get \eqref{eq:subgassianestimator}.\\
    To show that~\eqref{eq:subgaussian} holds, we follow the same idea as in~\cite{Sub_Gaussian}.
   	Let us consider the class $\mathcal{P}_{+,-}=\{p_+,p_-\}$ of probability density function $p_+$ and $p_-$ such that $p_+$ and $p_-$ are probability density functions of uniform random variables on  $[\frac{p^2-p}{2},\frac{p^2+p}{2}]$ and on $[\frac{-p^2-p}{2},\frac{p-p^2}{2}]$, respectively, for $p\in(0,1).$ It is easy to see that means of probability density functions $p_+$ and $p_-$ are $\theta_+=\frac{p^2}{2}$ and $\theta_-=-\frac{p^2}{2}$, respectively. Moreover, $1+\epsilon$-th central moment of both pdfs is equal to
    \begin{align}\label{eq:1+epsilonMoment}
    M=\frac{p^{\epsilon+1}}{2^{\epsilon+1}(\epsilon+2)}.
    \end{align}
    Let $(X_j,Y_j), j=1,2,..,t$ be i.i.d. pairs random variables such that $p_+$ is pdf of $X_1,$ and $Y_1=X_1$ if $X_1\in I=[\frac{p^2-p}{2},\frac{p-p^2}{2}]$ and $Y_1=-X_1$ if $X_1\notin I.$ 
    Notice that probability density function of $Y_1$ is $p_-.$
    Since we have that $\mathbb{P}\{X_1\in I\}=1-p,$ for $X^t=(X_1,X_2,...,X_t)$ and $Y^t=(Y_1,Y_2,...,Y_t)$, we have that
    \begin{align*}
    \mathbb{P}\{X^t=Y^t\}=(1-p)^t.
    \end{align*}
 	Using that $1-p\geq e^{\frac{-p}{1-p}},$ we have that $\mathbb{P}\{X^t=Y^t\}=(1-p)^t\geq2\delta,$ if $p\leq \frac{\ln2\delta}{\ln2\delta-t}.$ Setting that $p:=\frac{\ln 2\delta}{t(\ln 2\delta -1)},$ we have that $p\in(0,1)$ for all $t=1,2,...$ and $\delta\in(0,\frac{1}{2}).$ Let $\hat{\theta}_t=\hat{\theta}_t(\cdot)$ be any estimator, then we have that
    \begin{align*}
        \max&\left(\mathbb{P}\Big\{|\hat{\theta}_t(X^t)-\theta_+|>\frac{p^2}{2}\Big\},\mathbb{P}\Big\{|\hat{\theta}_t(Y^t)-\theta_-|>\frac{p^2}{2}\Big\}\right)\\
        &\geq\frac{1}{2}\mathbb{P}\Big\{|\hat{\theta}_t(X^t)-\theta_+|>\frac{p^2}{2} \,\,\,or\,\,\, |\hat{\theta}_t(Y^t)-\theta_-|>\frac{p^2}{2}\Big\}\\
        &\geq\frac{1}{2}\mathbb{P}\{ \hat{\theta}_t(X^t)=\hat{\theta}_t(Y^t)\}\\
        &\geq\frac{1}{2}\mathbb{P}\{X^t=Y^t\}\geq\delta.
    \end{align*}
    Finally, using~\eqref{eq:1+epsilonMoment} we get that $\frac{\left(\frac{p^2}{2}\right)^{\frac{\epsilon+1}{2}}}{2\sqrt{2}}\geq\frac{\left(\frac{p^2}{2}\right)^{\frac{\epsilon+1}{2}}}{2^{\frac{\epsilon+1}{2}}(\epsilon+1)}=M\geq M p^{\frac{\epsilon}{2}},$ which gives us that 
    $\frac{p^2}{2}\geq\left(8^{\frac{1}{\epsilon}}M^{\frac{2}{\epsilon}}p\right)^{\frac{\epsilon}{\epsilon+1}}$ and therefore we have that
    \begin{align*}
        \max&\left(\mathbb{P}\Big\{|\hat{\theta}_t(X^t)-\theta_+|>
        \left(\frac{8^{\frac{1}{\epsilon}}M^\frac{2}{\epsilon}\ln 2\delta}{t(\ln 2\delta-1)}\right)^{\frac{\epsilon}{1+\epsilon}}\Big\}\right.,\\ &\left.\mathbb{P}\Big\{|\hat{\theta}_t(Y^t)-\theta_-|> \left(\frac{8^{\frac{1}{\epsilon}}M^\frac{2}{\epsilon}\ln 2\delta}{t(\ln 2\delta-1)}\right)^{\frac{\epsilon}{1+\epsilon}}\Big\}\right)\geq\delta.
    \end{align*}
    Since we have that $\mathcal{P}_{+,-}\subseteq\mathcal{P}^M_{1+\epsilon},$ it follows that~\eqref{eq:subgaussian} also holds.}

\subsection*{{H. Proof of extensions in Remark~\ref{remark:regressors}}}
	{For compact notation, we set that $\overline{\mathbf{H}}$ and $\widetilde{\mathbf{H}}^t$ are the $N\times(MN)$ matrices whose $i$-th row vectors are equal to $[\mathbf{0},...,\mathbf{0},(\overline{\mathbf{h}}_i)^\top,\mathbf{0},...,\mathbf{0}]$ and $[\mathbf{0},...,\mathbf{0},(\widetilde{\mathbf{h}}_i^t)^\top,\mathbf{0},...,\mathbf{0}]$, respectively. Hence, for $\mathbf{H}^t=\overline{\mathbf{H}}^t+\widetilde{\mathbf{H}}^t$, we have that~\eqref{eq:obsregressor} can be written, in compact form, as
	\begin{align}\label{eq:obsregressorComp}
	    \mathbf{z}^{t} = \mathbf{H}^t\left(\mathbf{1}_{N}\otimes \boldsymbol{\theta}^{\ast}\right)+\mathbf{n}^{t}=\overline{\mathbf{H}}\left(\mathbf{1}_{N}\otimes \boldsymbol{\theta}^{\ast}\right)+\widetilde{\mathbf{H}}^t\left(\mathbf{1}_{N}\otimes \boldsymbol{\theta}^{\ast}\right)+\mathbf{n}^{t}.
	\end{align}
	Under this setting, we modify algorithm~\eqref{eq:alg2} such that, at each time $t=0,1,...,,$, each agent $i$ updates its estimate $\mathbf{x}_i^t$ according to
    \begin{align}\label{eq:algReg}
		\mathbf{x}_{i}^{t+1}=\mathbf{x}_{i}^{t}-\alpha_{t}\left(\frac{b}{a}\sum_{j\in\Omega_{i}}
		\boldsymbol{\Psi}_\mathrm{c}\left( \mathbf{x}_{i}^{t}-\mathbf{x}_{j}^{t} 
		+\boldsymbol{\xi}_{ij}^t\right)-\overline{\mathbf{h}}_{i}{\Psi}_\mathrm{o}\left(z_{i}^{t}-\overline{\mathbf{h}}_{i}^{\top}\mathbf{x}_{i}^{t}\right)\right).
	\end{align}
	Assuming that all Assumptions \ref{as:newtworkmodelandobservability}-\ref{as:nonlinearity} still hold (except those which overlap and are hence replaced with assumptions in Remark~\ref{remark:regressors}), we show that the results in subsections~\ref{subsection:as},~\ref{subsection:an} and~\ref{subsection-MSE} continue to hold for  algorithm~\eqref{eq:algReg}. Following the same idea as in Section~\ref{section-theoresults}, we write algorithm~\eqref{eq:algReg}, in compact form, by:
    \begin{align}\label{eq:algRegComp}
		\mathbf{x}^{t+1}=\mathbf{x}^{t}-\alpha_t\left( \frac{b}{a}\mathbf{L}_{\boldsymbol{\Psi}_\mathrm{c}}(\mathbf{x})- \overline{\mathbf{H}}^{\top}\boldsymbol{\Psi}_\mathrm{o}\left(\mathbf{z}^{t}-\overline{\mathbf{H}}\mathbf{x}^{t}\right)\right).
	\end{align}
	Substituting~\eqref{eq:obsregressorComp} into~\eqref{eq:algRegComp}, we get that
     \begin{align*}
		\mathbf{x}^{t+1}&=\mathbf{x}^{t}-\alpha_t\left( \frac{b}{a}\mathbf{L}_{\boldsymbol{\Psi}_\mathrm{c}}(\mathbf{x})- \overline{\mathbf{H}}^{\top}\boldsymbol{\Psi}_\mathrm{o}\left(\overline{\mathbf{H}}\left(\mathbf{1}_{N}\otimes \boldsymbol{\theta}^{\ast}\right)+\widetilde{\mathbf{H}}^t\left(\mathbf{1}_{N}\otimes \boldsymbol{\theta}^{\ast}\right)+\mathbf{n}^{t}-\overline{\mathbf{H}}\mathbf{x}^{t}\right)\right)\\
        &=\mathbf{x}^{t}-\alpha_t\left( \frac{b}{a}\mathbf{L}_{\boldsymbol{\Psi}_\mathrm{c}}(\mathbf{x})- \overline{\mathbf{H}}^{\top}\boldsymbol{\Psi}_\mathrm{o}\left(\overline{\mathbf{H}}\left(\mathbf{1}_{N}\otimes \boldsymbol{\theta}^{\ast}-\mathbf{x}^{t}\right)+\widetilde{\mathbf{H}}^t\left(\mathbf{1}_{N}\otimes \boldsymbol{\theta}^{\ast}\right)+\mathbf{n}^{t}\right)\right).
	\end{align*}
    Recalling $\boldsymbol{\eta}^t\in\mathbb{R}^{MN}$ from~\eqref{eq:zeta} and defining 
    $\boldsymbol{\zeta}^t\in\mathbb{R}^{N}$ by 
    \begin{align*}
\boldsymbol{\zeta}^t=\boldsymbol{\Psi}_\mathrm{o}\left(\overline{\mathbf{H}}\left(\mathbf{1}_{N}\otimes \boldsymbol{\theta}^{\ast}-\mathbf{x}^{t}\right)+\widetilde{\mathbf{H}}^t\left(\mathbf{1}_{N}\otimes \boldsymbol{\theta}^{\ast}\right)+\mathbf{n}^{t}\right)-\boldsymbol{\varphi}_\mathrm{o}\left(\overline{\mathbf{H}}\left(\left(\mathbf{1}_{N}\otimes \boldsymbol{\theta}^{\ast}\right)
		-\mathbf{x}^{t}\right)\right)
    \end{align*}
    algorithm~\eqref{eq:algRegComp} can be written by
    \begin{align}
		\mathbf{x}^{t+1}=\mathbf{x}^{t}-\alpha_t\left( \frac{b}{a}\mathbf{L}_{\boldsymbol{\varphi}_\mathrm{c}}(\mathbf{x}^t)- \overline{\mathbf{H}}^{\top}\boldsymbol{\varphi}_\mathrm{o}\left(     
		\overline{\mathbf{H}}\left(\left(\mathbf{1}_{N}\otimes \boldsymbol{\theta}^{\ast}\right)
		-\mathbf{x}^{t}\right)\right) - \overline{\mathbf{H}}^{\top}\boldsymbol{\zeta}^t+\frac{b}{a}\boldsymbol{\eta}^t \right),
	\end{align}
    Since random variable $\widetilde{\mathbf{H}}^t\left(\mathbf{1}_{N}\otimes \boldsymbol{\theta}^{\ast}\right)+\mathbf{n}^{t}$ satisfies Lemma~\ref{Lemma-Polyak}, the rest of the proofs are same as in the Section~\ref{section-theoresults}.}

\end{document}